\DeclareRobustCommand{\SkipTocEntry}[5]{}
\theoremstyle{plain}
\newtheorem{assumption}{Assumption}
\numberwithin{equation}{section}
\newtheorem{thm}{Theorem}[section]
\newtheorem{prop}[thm]{Proposition}
\newtheorem{lemm}[thm]{Lemma}
\newtheorem{cor}[thm]{Corollary}
\newtheorem{defi}[thm]{Definition}
\theoremstyle{remark}
\newtheorem{rema}[thm]{Remark}
\newtheorem{defn}[thm]{Definition}
\renewcommand{\div}{{\rm div}}
\newcommand{\curl}{{\rm curl}} 
\newcommand{\Z}{\mathbb{Z}}  
\newcommand{\N}{\mathbb{N}}
\newcommand{\R}{\mathbb{R}}
\newcommand{\Supp}{\rm Supp }
\newcommand{\wt}{\widetilde}
\newcommand{\D}{\Delta}
\newcommand{\bq}{\begin{equation}}
\newcommand{\eq}{\end{equation}}
\begin{document}

\title[]{ $L^\infty$ ill-posedness for a class of equations arising in hydrodynamics}

\author{Tarek M. Elgindi and Nader Masmoudi}
\address{The Courant Institute,\\ New-York University\\
251 Mercer Street\\   USA}
\email{elgindi@cims.nyu.edu and masmoudi@cims.nyu.edu}

\thanks{
Both authors were  partially supported by NSF grant DMS-1211806.}
%T.M. Elgindi is partially supported by NSF grant \# XXXX grant.}

\maketitle

\begin{abstract}
Many questions related to  well-posedness/ill-posedness in critical spaces 
for hydrodynamic equations have been open for many years. In this article we give a new approach to studying norm 
inflation (in some critical spaces)  for a wide class of equations arising in hydrodynamics. As an application, we prove strong ill-posedness of the $n$-dimensional Euler equations in the class $C^1\cap L^2 (\Omega)$ and also in $C^k \cap L^2(\Omega)$ where $\Omega$ can be the whole space, a smooth bounded domain, or the torus. We also apply our method to the Oldroyd B, surface quasi-geostrophic, and Boussinesq systems.\end{abstract}

\tableofcontents

\section{Introduction}

\addtocontents{toc}{\SkipTocEntry}
\subsection{The concept of well-posedness}
In 1903, Jacques Hadamard set forth a concept of well-posedness for partial differential equations of physical origin. Hadamard suggested that for a PDE problem to be well-posed (whether it be an initial value problem, a boundary value problem, or both) it should enjoy the following properties:

(1) Existence,

(2) Uniqueness,

(3) Continuous dependence on the initial/boundary data.

These conditions were obviously physically motivated because if an equation is to model a physical phenomenon then one would expect that solutions to the model exist, are unique, and that small perturbations should not result in chaotic responses by the system (at least for small times). 
Of course, well-posedness or the lack thereof depends upon which space one is looking for a solution in; thus a particular initial/boundary value problem might be well-posed in  one solution class but ill-posed in another. 
 
Based on the definition of well-posedness, one can think of at least three types of ill-posedness: nonexistence, nonuniqueness, and discontinuous dependence on the data.
In this work we are interested in nonexistence  and discontinuity with respect to the data.
In our investigations nonexistence in a space $X$ 
will be deduced from the fact  that a solution uniquely exists in a 
larger space $X_0$ with $X$ norm that becomes immediately equal to infinity.  
Nonexistence can be 
thought of as  the strongest kind of ill-posedness. 

There are weaker kinds of ill-posedness  that were studied in the literature: 
The solution map may not be $C^1$ or $C^2$ with respect to the data (see, for example, \cite{FRV} and \cite{MST02}), 
or the the solution map is not uniformly 
continuous with respect to the data in a bounded set \cite{KT05}. 
Many other  ill-posednessed questions have been studied in the case of 
dispersive equations and we refer the reader to \cite{CCT03,Lebeau05,IMM11,AC09,Carles07}. 
%  and these are the two kinds of ill-posedness we are concerned with in this work.  
\begin{defn} 
We say that a Cauchy problem
$$f_{t} = N(f),$$
$$f(0)=f_0$$
is \emph{mildly ill-posed} in a space $X$ if there exists a space $Y$ continuously embedded in $X$  and a constant $c>0$ such that 
for all $\epsilon>0,$ there exists  $f_{0} \in Y,$ with $$|f_{0}|_{X} \leq \epsilon$$ for which  
there exists a unique solution, $f(t) \in L^\infty ([0,T]; Y)$ for some $T>0$ with initial data $f_{0}$ such that: $$|f(t)|_{X} \geq c$$ for some $0<t<\epsilon.$ If $c$ can be taken to be equal to $\frac{1}{\epsilon}$,  we will say that the equation is \emph{strongly ill-posed.}  

\end{defn}

Typically, the space $Y$ will be a space which is smoother than $X$ and for which local existence 
is already known and $Y\subset X$.  The initial data $f_0$ will be chosen such that $|f_{0}|_{X} \leq \epsilon$  while 
  $|f_{0}|_{Y}$ may be large.  
Of course, both mild ill-posedness and strong ill-posedness only imply discontinuity with respect to the initial data. However, in many cases, strong ill-posedness can be used to prove nonexistence (the strongest form of ill-posedness in the sense of Hadamard)--see section 9.   

\begin{rema}
We remark here that there are several different understandings of the term ill-posedness--particularly that it cannot be used to refer to the sort of borderline phenomenon that is in this paper. While classically it might be more proper to use ill-posedness to refer to genuinely ill-posed problems such as the backwards heat equation, or the Kelvin-Helmholtz problem in vortex sheets, and other such situations where the issue isn't a matter of critical spaces but a real lack of solvability or uniqueness. Our reasons for using the term ill-posedness in this context are twofold: first that it has become common in the literature to use the term to refer even to non-solvability in critical spaces. Second, we actually do include results on genuine ill-posedness such as the result given in Proposition \ref{3dBahouriChemin} below where we construct a solution of the 3D Euler equations which belongs initially to $W^{1,p}$ for all $p<\infty$ but which leaves $W^{1,q}$ for $q>2$ in finite time. 
\end{rema}

\addtocontents{toc}{\SkipTocEntry}
\subsection{The Euler equations of incompressible flow} 
One of the most elusive and difficult issues to deal with in studying the equations of incompressible hydrodynamics is their inherent nonlocality. This can be seen intuitively: every part of the fluid should, in some way, affect every other part of the fluid. Recall the Euler equations for inviscid and incompressible flow modeling an ideal (frictionless) liquid in the whole space:

\begin{equation} 
\partial_{t} {u} + ({ u}\cdot \nabla) {u} +\nabla p =0 \, \, \, \text{on} \, \, \mathbb{R}^{n} \times (0,\infty),\end{equation}
\vspace*{-7mm}
\begin{equation} \text{div}({ u})=0 \, \, \, \text{on} \, \, \mathbb{R}^{n} \times (0,\infty), \end{equation}
\vspace*{-7mm}
\begin{equation} { u}(x,t) \rightarrow 0 \, \, \, \text{as}\, \, \, |x|\rightarrow \infty, \end{equation}
\vspace*{-7mm}
\begin{equation} { u}(x,0) =  u_0\, \, \text{on} \, \, \mathbb{R}^{n}. \end{equation}

In (1.1)-(1.4), ${ u}(x,t) \in \mathbb{R}^{n}$ is the velocity of the fluid at position $x \in \mathbb{R}^{n}$ and at time $t\in [0,\infty).$ Equation (1.4) says that the initial velocity profile of the fluid is given by $ u_0 .$ Equation (1.3) is an idealized condition which says that the fluid is at rest at spatial infinity. (1.2) dictates that the fluid be incompressible, which means that if one tracks the evolution of a particular portion of the fluid in time then the volume of that portion cannot change in time. Equation (1.1) is just Newton's second law, the momentum equation, which says that the only force acting on the fluid is that of internal pressure. 
 
One of the most challenging basic problems in the study of fluid equations, is the question of well-posedness for the Euler equations. In two dimensions, the global well-posedness question was settled in $C^{k,\alpha},$ spaces with $ k \geq 1, 0<\alpha<1$ by Wolibner \cite{Wolibner33} and H\"older \cite{Hol} in the 1930's. Note that well-posedness in $C^k$
was left open. In three space dimensions, it is not known whether the Euler equations are globally well-posed in the class of smooth solutions. The main results which exist in this direction are \emph{local} well-posedness results which go back to Lichtenstein \cite{Lich} in the $C^{k,\alpha}$ case. There is also a literature on blow-up criteria such as the blow-up criteria of Beale, Kato, and Majda \cite{BKM} and the geometric criteria of Constantin, Fefferman, and Majda \cite{ConstFM} (see \cite{DengHouYu} and the references therein for various improvements on these blow-up criteria). The criteria of Beale-Kato-Majda states that the growth of the high Sobolev norms of the velocity field is controlled by the growth of the $L^\infty$ norm of the vorticity. In particular, they prove:
$$|u|_{H^s} \leq |u_0|_{H^s}^{\exp(C\int_{0}^t |\curl(u)|(\tau)_{L^\infty}d\tau)}, \text{for all} \, \, s>\frac{d}{2}+1,$$ 
which was an improvement on the classical energy estimate:
$$ |u|_{H^s} \leq |u_0|_{H^s}\exp(C\int_{0}^t |\nabla u(\tau)|_{L^\infty}d\tau).$$
Note that the Beale-Kato-Majda estimate allows one to say that $L^\infty$ bounds on the curl of $u$ prevent blow-up as opposed to the classical energy estimate which requires $L^\infty$ bounds on the full gradient of $u$. This improvement, however, comes at the cost of an exponential so that the Beale-Kato-Majda estimate is of double exponential type. 

In $n$ dimensions, the equation for $\omega:=\nabla\times u$ is:
$$\partial_t \omega + (u\cdot \nabla)\omega=\omega\cdot\nabla u.$$
Note that if $\omega$ depends only on two variables and it has no third component, then $\omega\cdot\nabla u\equiv 0$ which immediately implies the conservation of $\omega$ in all $L^p$ spaces including $L^\infty$. This is the main tool in proving global existence and uniqueness for smooth data in two dimensions. Upon this basis, Yudovich built a well-posedness theory for the 2d Euler equations with vorticites in $L^\infty$. 
While the boundedness of the vorticity gives us global well-posedness in 2d, the equation is barely well-posed in the sense that Holder norms of $\omega$ are allowed to grow double exponentially in time as is given in the Beale-Kato-Majda estimate above.  
The situation is even worse in higher dimensions, $n\geq 3$, since the term $\omega\cdot\nabla u$ can actually cause vorticity growth. 
In this regard, it is instructive to mention the following interesting example. 

\addtocontents{toc}{\SkipTocEntry}
\subsection{Two examples illustrating the difference between the 2d and 3d Euler equations}

We begin by introducing the so called 2$\frac{1}{2}$-dimensional solutions of the 3d Euler equations. Take initial data for the 3d Euler equations of the following form: $$u_0(x,y,z)=(u^1_0(x,y),u^2_0(x,y),u^3_0(x,y)).$$
Then there exists a solution $u(t)$ which remains a function of $x$ and $y$ only and which solves:
$$\partial_t u^h +u^h\cdot\nabla u^h +\nabla p=0,$$
$$\partial_t u^3 + u^h\cdot\nabla u^3=0,$$
with $u^h=(u^1,u^2)$. Hence, $u^h$ satisfies the 2d Euler equations with initial data $u^h_0$ and $u^3$ is just passively transported by $u^h$. In particular, the vorticity associated to $u^h$ satisfies the 2d vorticity equation:
$$\partial_t \omega^h +u^h\cdot\nabla \omega^h =0.$$
This means, using the method of Yudovich, that if $\omega_0^h\in L^\infty(\mathbb{T}^2)$ is mean-zero there exists a unique solution to the 2d vorticity equation $\omega^h(t)$ which remains in $L^\infty_{x,t}$. From there, we know that the corresponding $u^h$ is log-Lipschitz due to the fact that it is divergence-free and its curl, $\omega_h$, is bounded . This allows us to solve the transport equation for $u^3$ uniquely. However, $u^3$ could potentially lose regularity since $u^h$ is not Lipschitz but only log-Lipschitz. 

\begin{prop} 

\label{3dBahouriChemin}

There exists $u_{0}=(u^h_0,u^3_0) \in W^{1,2}(\mathbb{T}^3)$with $\omega_{0}=\nabla \times u_0 \in L^\infty(\mathbb{T}^3)$ depending only on two variables, $x$ and $y$, and a unique solution $u(t)$ of the 3d Euler equations which satisfies the following properties:

\begin{enumerate}

\item $u$ is a function of $x,y,$ and $t$ only.
\item $|\omega^h(t)|_{L^\infty}=|\omega_0^h|_{L^\infty}$, for all $t>0$. 
\item $u(t)\in C^{\exp(-Ct)}$ for some large constant $C>0$.

Moreover, $$| \omega(t)|_{L^p}=+\infty$$

for all $p>\frac{2}{1-e^{-t}}.$

\end{enumerate}

\end{prop}

As a consequence, the 3D Euler equations are strongly ill-posed  (in the strongest sense of Hadamard) in the class of finite-energy velocity fields with bounded vorticity. The example is quite simple: if $u^{h}$ is taken to be such that  $\omega^h=sgn(x)sgn(y),$ then it is known (\cite{BC94}) that the Lagrangian flow map associated to $u^{h}$ is of regularity $C^{e^{-t}}$ and no better. This is due to the fact that $\nabla u^h$ has a logarithmic singularity at the origin. Thus, $u^{3}(x,t) =u^{3}_{0}(\Phi_h (x,-t))$ and $u^3_0$ is then chosen such that $u^3$ does not belong to $C^{\alpha}$ for $\alpha>e^{-t}$ (for example, one can take $u^{3}_0(x,y) =x^2+y^2$ in a small neighborhood of the origin). Proposition \ref{3dBahouriChemin} then follows by Sobolev embedding.  We remark that this example can be modified to give an example of a global smooth solution of the 3D Euler equations on $\mathbb{T}^3$ for which the vorticity grows exponentially in time:

\begin{prop}

There exists $u_0\in C^\infty (\mathbb{T}^3)$ such that the 3-D Euler system has a global strong solution $u(t)$ with initial data $u_0$ for which:

$$|\omega(t)|_{L^\infty}\geq e^{t}$$ for all $t>0.$
\end{prop}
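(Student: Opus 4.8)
The plan is to upgrade the $2\tfrac12$-dimensional construction behind Proposition 1.6 from a singular-vorticity building block to a smooth one, exploiting the fact that a hyperbolic stagnation point of a two-dimensional flow stretches passively advected gradients at a fixed exponential rate. Concretely, I would again look for solutions of the 3D Euler equations on $\mathbb{T}^3$ of the form $u(x,y,z,t) = (u_h(x,y,t), u_3(x,y,t))$, where $u_h$ solves the 2D Euler equations and $u_3$ is advected by the 2D flow: $\partial_t u_3 + u_h\cdot\nabla u_3 = 0$. For this ansatz the 3D vorticity is $\omega = (\partial_y u_3, -\partial_x u_3, \omega_h)$ with $\omega_h = \partial_x u_h^2 - \partial_y u_h^1$ the 2D vorticity, so $|\omega(t)|_{L^\infty} \ge |\nabla_{x,y} u_3(t)|_{L^\infty}$, and it suffices to produce a \emph{smooth} 2D velocity field $u_h$ whose flow map $\Phi_h(\cdot,t)$ has $|\nabla\Phi_h(0,-t)|$ growing at least like $e^t$, together with a smooth $u_{3,0}$ whose gradient is aligned with the expanding direction at the origin.

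The key steps, in order: (i) Choose $u_h$ to be a smooth, time-independent \emph{stationary} solution of 2D Euler on $\mathbb{T}^3$ — for instance a shear or a cellular flow — possessing a hyperbolic fixed point at the origin with linearization $\mathrm{diag}(\lambda,-\lambda)$ for some $\lambda>0$; rescaling in time (equivalently scaling the amplitude of $u_h$, which keeps it a steady 2D Euler solution) lets us normalize $\lambda = 1$. (ii) Since $u_h$ is steady, the flow map is an autonomous ODE $\dot{X} = u_h(X)$, and by the stable/unstable manifold theorem near the hyperbolic point the linearized flow $D\Phi_h(0,t)$ is exactly $\mathrm{diag}(e^{t},e^{-t})$; hence the backward flow $D\Phi_h(0,-t) = \mathrm{diag}(e^{-t},e^{t})$ expands the $y$-direction by $e^{t}$. (iii) Take $u_{3,0}(x,y) = \chi(x,y)\, y$ where $\chi$ is a smooth cutoff equal to $1$ near the origin, so $u_{3,0}\in C^\infty(\mathbb{T}^3)$ and $\nabla u_{3,0}(0) = (0,1)$. (iv) Because $u_3(t,x) = u_{3,0}(\Phi_h(x,-t))$ we get $\nabla u_3(t,0) = (D\Phi_h(0,-t))^{\mathsf T}\,\nabla u_{3,0}(0)$, whose second component is $e^{t}$; therefore $|\omega(t)|_{L^\infty} \ge |\nabla u_3(t,0)| \ge e^{t}$. (v) Finally note $u_0 = (u_h, u_{3,0})\in C^\infty(\mathbb{T}^3)$ and the solution is global and strong: $u_h$ is a global (indeed steady) smooth 2D Euler solution, and $u_3$ solves a linear transport equation by a smooth divergence-free field, so it stays smooth for all time; thus $u\in C([0,\infty);C^\infty)$ is the unique strong solution.

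The main obstacle is step (ii): one must be sure that the \emph{true} nonlinear flow map, not merely its linearization, produces a genuinely $e^{t}$ (or faster) stretching of the relevant gradient at the chosen point, and that this is inherited globally in time rather than only for small $t$. With a steady background this is essentially automatic from the structure of a hyperbolic fixed point — along the stable manifold the linearization is exact for $D\Phi_h$ at the fixed point itself, since the fixed point is invariant — but some care is needed to confirm that the eigenvalues of the linearization of a concrete smooth steady 2D Euler flow on $\mathbb{T}^3$ are genuinely nonzero and real (hyperbolic), which is why a standard cellular flow or a carefully chosen shear is convenient. A secondary, purely bookkeeping point is to arrange the cutoff $\chi$ so that $u_{3,0}$ is globally defined and smooth on the torus while still agreeing with the linear profile $y$ on a neighborhood of the origin large enough to contain the backward trajectory of the origin (which is just the origin, since it is a fixed point), so no conflict arises.
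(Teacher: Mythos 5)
Your proposal is correct and follows essentially the same route as the paper: the $2\frac12$-dimensional ansatz with the stationary cellular flow $(\sin x\cos y,-\cos x\sin y)$ (a hyperbolic stagnation point at the origin) and a smooth advected $u_3$ that is non-constant along the $y$-axis, so that $\nabla u_3$, and hence the vorticity, grows like $e^t$ via the exact linearization $D\Phi_h(0,-t)=\mathrm{diag}(e^{-t},e^{t})$ at the fixed point. The only cosmetic remark is that no stable/unstable manifold theorem is needed --- as you yourself note, the variational equation at the invariant fixed point already gives the exact exponential rates --- and one could take $u_{3,0}=\sin y$ to avoid the cutoff.
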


To prove the proposition, take $u_h$ to be the stationary solution of the 2-D Euler equations $(sin(x)cos(y),-cos(x)sin(y)).$ Note that the flow-map associated to $u_h$ is hyperbolic at the origin. Indeed, linearizing, we see that $u_h \approx (x,-y)$ near the origin. As a consequence, the flow map induced by $u_h$ has an exponential contraction along the y-axis at the origin. Once $u_{3}$ is chosen to be non-constant along the y-axis the exponential growth is attained. Using some recent ideas of Kiselev and Sver\'ak \cite{KS} one can actually prove that the exponential growth for data with a hyperbolic point for the $2\frac{1}{2}$ dimensional solutions is, in a sense, generic. This gives some idea as to why one might consider questions of ill-posedness for weak solutions: behind ill-posedness for weak solutions there may be uncontrollable growth for strong solutions. 

\addtocontents{toc}{\SkipTocEntry}
\subsection{Previous ill-posedness results for weak solutions}

In recent years, the question of well-posedness at low-regularity has become of great interest due to its connections with turbulence and the essence of weak solutions. As we stated above, ill-posedness can mean one of three things: the initial data can start in $X$ and then leave $X,$ or we have non-uniqueness, or the solution map is discontinuous--the first case being the strongest form of ill-posedness.  There are still many questions which are unanswered in the well-posedness theory of weak solutions even in two spatial dimensions. Existence of weak solutions in the class $W^{1,p},$ $p>1$ was established in two dimensions. Uniqueness has been proven only for weak solutions which are Lipschitz or "almost" Lipschitz (see the works of Yudovich \cite{Yudovich63}, \cite{Yudovich95}, and Vishik \cite{V1}, \cite{V2} for example). 

Previous works in the direction of ill-posedness include results of DiPerna-Lions \cite{DL89im} (nonexistence for $u\in W^{1,p}, p<\infty$ in three dimensions), Bardos-Titi \cite{BT} (nonexistence for $u\in C^\alpha$, $\alpha<1$ in three dimensions), Misiolek-Yoneda \cite{MisY2} (nonexistence in critical Besov-spaces in three dimensions), and Cheskidov Shvydkoy \cite{CheskidovShvydkoy} (non-continuity of the solution map in supercritical Besov spaces) to mention a few. All of the above cases except the last one were done by explicit examples.  A very interesting preprint by Bourgain and Li \cite{BLi} studies the ill-posedness of the Euler equations with velocity in $H^{\frac{d}{2}+1},$ the difficulty being that $H^{\frac{d}{2}+1}$ is a critical space sitting at the lower threshold of the classes of strong solutions where local well-posedness holds (the Euler equations are locally well-posed in $H^s$ with $s>\frac{d}{2}+1).$ 
Finally, nonuniqueness of weak solutions was shown by Scheffer \cite{Scheff}, Shnirelman \cite{Shnir}, De Lellis and Szekelyhidi \cite{DeLS}, Isett \cite{Isett}, and Buckmaster \cite{DeLSBu} for weak solutions of the Euler equations in various "very weak" spaces, the smallest of which is $C^{\alpha}_{t,x},$ $\alpha<\frac{1}{5}.$ It is conjectured that non-uniquness should hold up to $C^\frac{1}{3}.$

\addtocontents{toc}{\SkipTocEntry}
\subsection{How ill-posedness in $C^1$ arises}

We will now move to discuss some of the ideas behind how ill-posedness for $C^1$ initial velocity can be understood for the 2d Euler equations. We would like to clarify from the outset: the phenomenon we are about to describe is not special to dimension 2,3, or any dimension. Indeed, if we are able to prove ill-posedness in $C^1$ in two dimensions it will automatically imply ill-posedness in higher dimensions in many settings.\footnote{For example, we can say that any solution of the 2d Euler equations on $\mathbb{T}^2$ is a solution of the 3d Euler equations on $\mathbb{T}^3$ using ideas similar to those in Subsection 1.3. Furthermore, by passing to axi-symmetric solutions one can usually say the same for solutions on $\mathbb{R}^3$.} 

Now, to prove ill-posedness in $C^1$ we need to consider the equation for $\nabla u$. 
By differentiating (1.1) we get:
$$\partial_t \nabla u + (u\cdot \nabla)\nabla u + Q(\nabla u, \nabla u)+ D^2 p=0,$$
with $Q$ just a bilinear quadratic form. As far as local-in-time $L^\infty$ estimates go, $Q$ and the transport term cause no difficulty. The question is whether $D^2p$ can be controlled by a function of $|\nabla u|_{L^\infty}$ and weaker norms.  By taking the divergence of (1.1) and using that $\div(u)=0$ we get:
$$\Delta p = -2\, \text{det}(\nabla u).$$
This means that $D^2 p$ is some singular integral matrix applied to $\text{det} (\nabla u)$. It turns out to be possible to construct a specific function $u\in C^1$ for which $D^2 p\not\in L^\infty$.  This is the source of the ill-posedness. However, even if at the initial time $D^2 p$ is unbounded, the difficulty then moves to proving that if we solve the Euler equations with some special initial data, the $C^1$ norm of $u$ becomes unbounded--though bounded initially. 

The difficulty in proving that the full non-linear problem actually exhibits this growth is mainly that the transport term could prevent this growth. However, as is explained in the next subsection, this is not possible when the velocity field belongs to the class $L^\infty([0,T]; \text{Lip})$. Hence, the general method for dealing with such problems is:

\begin{enumerate}
\item{Construct initial data which is designed to cause growth at $t=0$.}
\item{Prove that the non-linear evolution does not eliminate the growth mechanism for $t>0$.}
\end{enumerate}
The most difficult part in the $C^1$ problem for the Euler equations is part (2). 
This motivates the study of the following simple linear problem.

\addtocontents{toc}{\SkipTocEntry}
\subsection{The general result of the paper and some applications}
The main result of this paper is that general transport equations of the form:
\begin{equation} \partial_t f+u\cdot\nabla f = R(f),\end{equation}
with $u$ a Lipschitz continuous, divergence-free, velocity field and $R$ a singular integral operator, are strongly ill-posed on $L^\infty$ under some mild conditions on $R$ and in any spatial dimension. 
The idea is to consider (1.5) as a perturbation of the equation $$\partial_t f =R(f). $$ 
It must first be proven, under some conditions on $R$, that this equation itself is ill-posed on $L^\infty$. Since $R$ is linear this is possible to do for short time simply by writing the solution as a series expansion in powers of $tR$. 
However, when dealing with (1.5), since $u\cdot\nabla f$ contains derivatives of $f$ and since we are working at such low regularity, it is not possible to take the transport term as a perturbation directly. Hence, we opt to "put the derivative on $u$" by passing to the Lagrangian formulation, which is to write the equation along the characteristics of $u$. Indeed, we solve $$\frac{d}{dt} \Phi =u\circ \Phi,$$ $$\Phi\Big|_{t=0}=Id,$$
Then we write: 
$$\partial_t (f\circ \Phi)=R(f)\circ \Phi. $$
If $\Phi$ commuted with $f$ we would be able to conclude the ill-posedness. Hence we write:
$$\partial_t (f\circ\Phi)= R(f\circ\Phi)+ [R,\Phi]f,$$ 
with $$[R,\Phi] f=R(f)\circ\Phi-R(f\circ\Phi).$$
Thus the result relies upon proving good estimates for the commutator $[R,\Phi]$. It turns out that this commutator can be cast into the framework of the so-called Calder\'on commutators and we are able to prove that the operator norm of $[R,\Phi]$ on a suitably chosen space $X$ can be bounded as follows:
$$|[R,\Phi]|_{X\rightarrow X} \lesssim \max\{|\Phi-Id|_{Lip}, |\Phi^{-1}-Id|_{Lip}\}. $$ 
This allows us to prove $L^\infty$ ill-posedness for equations like (1.5) in great generality. 

\addtocontents{toc}{\SkipTocEntry}
\subsection{Application to the Euler equation in $C^k$ spaces}
As an application we then prove:

\begin{thm} Let $\Omega$ be either $\mathbb{R}^2,$ $\mathbb{T}^2$, or a smooth bounded domain. 

\item{(I) There exists a divergence-free $u_0\in C^1_{c}(\Omega)$ so that the unique (Yudovich) solution of the incompressible 2d Euler equations does not belong to $L^\infty_t([0,\delta]; C^1(\Omega))$ for any $\delta>0$.}

\item{(II) For each $k=2,3,...$ there exists a divergence-free $u_0\in C^k(\Omega)$ so that the unique solution of the incompressible 2d Euler equations belonging to $C^{k-\epsilon}(\Omega)$ for every $\epsilon>0$ satisfies: $$|D^{k} u|_{L^\infty}=+\infty,$$ for all $t\in [0,1].$}
\end{thm}

\begin{rema}
\item{(1) The initial data in both parts of Theorem 1.5 can be taken to be as small as we want.}
\item{(2) It is not difficult to extend these results to the case of $u_0\in C^1(\mathbb{R}^3)$ with decaying data by using the axi-symmetric 3d Euler equations, but we do not do this here. The extension to higher dimensional periodic data is obvious.}
\item{(3) Note that in the case (II) we have a much stronger result because the transport term is much easier to deal with since the velocity field almost belongs to $C^2.$}
\end{rema}

The proof of Theorem 1.5 basically reduces to casting the incompressible Euler equations in the setting of our general linear equation (1.5) and then checking that the conditions on $R$ are satisfied.

\addtocontents{toc}{\SkipTocEntry}
\subsubsection{The right initial data } 
 
We would like to make a few comments on the initial data which leads to growth. The reason we choose to highlight this particular issue here is that understanding how the initial data is constructed may give some insight into other questions such as the growth of Sobolev norms in the 2d case or even the finite time blow up problem in 3d. Indeed, the recent result of Kiselev and Sver\'ak \cite{KS} is based upon perturbing a particular singular stationary solution constructed by Bahouri and Chemin. Moreover, it must be noted that Sobolev norms for smooth solutions in 2d grow double exponentially if and only if\footnote{One direction of the equivalence is clear using the energy inequality $$|u|_{H^s}\leq|u_0|_{H^s} \exp ( \int_0^t |u(s)|_{C^1}ds).$$ The other direction is a consequence of the vorticity conservation in $L^\infty$ and the inequality: $$|u|_{C^1}\leq C |\omega|_{L^\infty} \log (|u|_{H^s}+10)$$ for $s>2.$} the $C^1$ norm of $u$ grows exponentially. Hence, growth of the $C^1$ norm is crucial. 

It turns out that the right initial data to choose to prove ill-posedness in $C^1$ is precisely a $C^1$ function $u^*$ for which the solution $p^*$ of $$\Delta p^* = -2 \text{det} (\nabla u^*)$$ satisfies that $D^2 p^* \approx Log|x^2+y^2|$ near $(0,0).$ Define $$G= (xy^3-yx^3) Log(x^2+y^2).$$ Let $$u^*= \nabla^\perp \Delta G + \kappa (y,0),$$ for some $\kappa>0$ sufficiently large. 

Because $xy^3-yx^3$ is harmonic, $\nabla u^* \in L^\infty$. At the same time, $\partial_{xyyy}G\not\in L^\infty.$ By choosing $\kappa$ large enough, it can then be shown that $$|D^2p^*|\approx Log(x^2+y^2).$$

\addtocontents{toc}{\SkipTocEntry}
\subsubsection{Comparison with some recent results}

This paper is devoted to establishing a general framework to study non-linear  and non-local transport equations in critical spaces based on $L^\infty$ (such as $L^\infty$, $C^1$, etc.). As an application, we prove non-existence of $C^1$ solutions to the incompressible Euler equations coming from $C^1$ data (this is done in Section 8). Recently, the same result was proven by Bourgain and Li \cite{BLi2} using completely different methods (it should be noted that this article and \cite{BLi2} actually appeared on the arXiv on the exact same day). Misiolek and Yoneda also proved non-continuity of the solution map on $C^1$ \cite{MisY}. A main difference between the method of Bourgain and Li in \cite{BLi2} and our method is that they focus upon the vorticity formulation of the Euler equations and see the ill-posedness through the fact that if $R$ is a singular integral operator and $\Phi$ is a $C^1$ map, $R(f)\in L^\infty$ does not imply that $R(f\circ \Phi)$ belongs to $L^\infty$. Hence, the source of ill-posedness for them is very much Lagrangian in nature. Our main approach is to observe that, in the Eulerian frame, the second derivative of the pressure is a singular integral of the gradient of the velocity field which isn't bounded in general on $L^\infty$. That said, we do incorporate certain elements of the Lagrangian description as well. Another important point is that while the results in \cite{BLi2} are exclusively focused on the Euler equations, our outlook is slightly more general and the application to the Euler equations is just one part of this work. Outside of fluids, there is a substantial literature on ill-posedness type results in the study of dispersive equations (see, for example \cite{CCT03}, \cite{KT05}, and \cite{IMM11}). We emphasize, however, that there is a key difference between those results and the results presented here. The ill-posedness in those cases are generally non-linear and in super-critical settings. Here, the ill-posedness is \emph{linear} and at critical regularity. In fact, the main difficulty in our work is to prove that the non-linearity does not serve as a stabilizing mechanism as happens, for example, in the work of Bressan and Nguyen \cite{BressanNguyen} where $L^\infty$ ill-posedness at the linear level is quenched at the non-linear level (this is discussed in more detail below).

\addtocontents{toc}{\SkipTocEntry}

\subsection{Further Applications}
In Section 10 we give a few extra applications of the main commutator estimate, Proposition \ref{CommutatorEstimate}. In particular, we prove $L^\infty$ and $W^{1,\infty}$ estimates for three system: the Oldroyd-B system, the surface quasi-geostrophic (SQG) system, and the Boussinesq system. Each one of these systems has a special extra feature which gives a different twist on how we use Proposition \ref{CommutatorEstimate}. In the Oldroyd-B case, the "singular integral" we encounter is actually non-linear since it is based on the operator $f\rightarrow u$ where $u$ is the solution of the Navier-Stokes equation with forcing term $f$ and initial data $u_0\equiv 0$. Using some a-priori estimates specific to the Oldroyd-B system, one can effectively recast the co-rotational Oldroyd-B system into a setting where we can directly apply Proposition \ref{CommutatorEstimate}. This is done in Section 10.1. We also consider the SQG system where the linear singular integral term which leads to the $L^\infty$ ill-posedness is actually variable coefficient. In that case, we cannot write the solution operator on the Fourier side as we do in the constant coefficient case. Nevertheless, we circumvent this by doing a Taylor expansion in time using local well-posedness in a critical Besov space and thus succeed in proving the (mild) ill-posedness. 

\addtocontents{toc}{\SkipTocEntry}
\subsection{Brief outline of the paper}

In section 2 we will give our main technical linear result from which all the applications will follow. Section 3 will be the proof of the linear estimate which relies on a highly non-trivial commutator estimate on Besov spaces. In Section 4 we will prove ill-posedness for linear transport equations with singular integral forcing. In sections 5 and 6 we will prove ill-posedness for perturbations of the 2D Euler equations and the full 3D Euler equations in vorticity form in the class of flows with bounded vorticity. In section 7 we prove the strong ill-posedness of the Euler equations in the class of $C^1$ velocity fields and that the solution map on $C^{1,\alpha}$ does not have a bounded extension to $C^1.$ In section 8, we show non-existence of a $C^1$ solution to the Euler equations from some $C^1$ initial data.   In Section 9 we outline how to modify the the proof in the $C^1$ case to prove ill-posedness in $C^k$ for any integer $k\geq 1$. Finally, in Section 10, we apply our linear result to the Oldoryd-B, surface quasi-geostrophic, and Boussinesq systems. 

\section{The Building Block : A  Linear Result}

We consider linear partial differential equations of the following form:

\begin{equation} \label{LinearRTP1} f_{t}+u\cdot\nabla f=R (f), \end{equation}
\begin{equation} \text{div}(u)=0, \end{equation}
\begin{equation} f(t=0)=f_{0}. \end{equation}

Here, $R$ is a Calder\'{o}n-Zygmund singular-integral operator and $u$ is a divergence-free Lipschitz function belonging to the class $L^\infty([0,T]; \text{Lip})$ for some $T>0$. 
A natural question to ask is: 
\begin{equation} \label{question} \text{If $f_{0} \in L^\infty,$ is it true that $f(t)\in L^\infty$ for even a short time?} \end{equation}
This question can be answered affirmatively if $L^\infty$ were replaced by $L^p$ due to the divergence free condition on $u$ and the fact that Calder\'on-Zygmund singular-integral operators are bounded on $L^p$. Since such operators are generally unbounded on $L^\infty$, the answer to (2.4) might be negative.  

\addtocontents{toc}{\SkipTocEntry}

\subsection{A first example}

It is instructive to consider the following one-dimensional example when the velocity field is not present ($u\equiv 0$):

$$f_{t}=H(f),$$ where $H$ is the Hilbert transform. Recall that the Hilbert transform is a singular integral operator mapping $L^2(\mathbb{R})$ to $L^2(\mathbb{R})$ which is defined in the following way: $$H(f)(x):=\frac{1}{\pi}P.V.\int_{\mathbb{R}} \frac{f(y)}{x-y}dy.$$ It is standard that $H$ has the following form under the Fourier transform: $$\mathcal{F}( H(f))(\xi)= -i sgn(\xi) \mathcal{F}(f)(\xi).$$ Recall also the Riesz transforms, $R_j$, $j=1,...,n$, which are higher dimensional analogues of the Hilbert transform. These are given under the Fourier transform, by multiplication by $-i\frac{\xi_j}{|\xi|}.$

Simple calculations, using the fact that $H^2=-1$ yield that the solution to this evolution equation can be written explicitly:
$$f(t):=\exp(t H) f_0=\text{cos}(t)f_{0}+\text{sin}(t)H(f_{0}).$$
It is known that while $H:L^p\rightarrow L^p$ is bounded for each $1<p<\infty$, there exist functions $f\in L^\infty$ for which $H(f)\not\in L^\infty$. 

There are two points which are important to take from this calculation:
\\
(1) If $t$ is small enough, $\exp(tH)$  is unbounded on $L^\infty.$ 
\\
(2) If $t$ is small enough $\exp(tH)$ \text{ is as singular on} $L^\infty$ \text{as} $tH$  is. 
\\
Hence, the answer to question (2.4) is negative for the case $u\equiv 0$ and $R=H.$

\addtocontents{toc}{\SkipTocEntry}

\subsection{A second example}\label{SecondExample}

A second useful example is the so-called Burgers-Hilbert equation:
\begin{equation} \label{BH} \partial_t u + u\partial_x u =H(u),\end{equation} where $H$ is the Hilbert transform.  This equation, while non-linear can be seen as a one-dimensional version of (2.1) where now the advected quantity and the velocity field are the same function and $R=H$. While $L^\infty$ initial data can leave $L^\infty$ for the linearized equation $$\partial_t u=H(u),$$ Bressan and Nguyen \cite{BressanNguyen} showed the propagation of $L^1(\mathbb{R})\cap L^\infty(\mathbb{R})$ solutions for \eqref{BH}. In other words, given $u_0\in L^1\cap L^\infty$, there exists a unique entropy solution of \eqref{BH} with initial data $u_0$ and which remains bounded in $L^1\cap L^\infty$. Hence, for the Burgers-Hilbert equation, the answer to \eqref{question} is affirmative, even though the answer for the linearized equation is negative. What we learn from this example is that the transport term can prevent growth of the $L^\infty$ norm if it is strong enough. In the following sections, we will prove that if $u$ is uniformly Lipschitz continuous on a time interval $[0,T]$, then the linear equation (2.1) must exhibit $L^\infty$ growth under some mild  conditions on $R$. This will then be applied to some non-linear problems. 

\addtocontents{toc}{\SkipTocEntry}

\subsection{Preliminaries and notations}

Before stating our main result we will first introduce a little bit of necessary background material. Due to the criticality of our problem, we will \emph{need}  Besov spaces.  Throughout this paper we will use the convention that $C$ is an absolute constant which changes from line to line. 
By $L^p$ we mean the space of measurable functions $f$ on $\mathbb{R}^n$ so that $|f|^p$ is integrable, $L^\infty$ being the space of bounded functions on $\mathbb{R}^n$. By $W^{1,p}$ we mean the Sobolev space on $\mathbb{R}^n$ of $L^p$ functions whose derivative also belongs to $L^p$. We will also define the Lipschitz class using the following norm:
$$|f|_{\text{Lip}}= |f|_{L^\infty} + \sup_{x \not =y}\frac{|f(x)-f(y)|}{|x-y|}.$$
Moreover, we define the space $L^\infty([0,T]; \text{Lip})$ to be the set of all functions $f:\mathbb{R}^n\times\mathbb{R}\rightarrow \mathbb{R}$ for which $$|f|_{L^\infty ([0,T]; \text{Lip})}:=\sup_{t\in [0,T]} |f(t)|_{\text{Lip}}<\infty.$$Similar notation will be used for vector valued functions.

We will be making use of two kinds of commutators: a commutator of a singular integral operator and composition with a given function and a commutator of a singular integral operator and multiplication by a given function $[\cdot,\cdot]$ and $[\cdot,\cdot]_{*}.$

\begin{defn}

Let $n\geq 2$, $g\in L^\infty(\mathbb{R}^n),$ $\Phi:\mathbb{R}^n\rightarrow\mathbb{R}^n$ be a bi-Lipschitz measure preserving map, and $R$ be a Calder\'on-Zygmund singular integral operator with kernel $\frac{\Omega(\frac{x}{|x|})}{|x|^n}$ for some $\Omega$ which is smooth and mean-zero on $\mathbb{S}^{n-1}.$ Define the following bounded linear operators $[R,\Phi]$ and $[R,g]_{*}$ mapping $L^p$ to $L^p$ for $1<p<\infty$: 
$$[R,\Phi]f := R(f\circ\Phi)-R(f)\circ \Phi, $$
and
$$[R,g]_{*}f := R(fg)-R(f)g.$$
\end{defn}

\begin{rema}
The fact that $[R,\Phi]$ is bounded on $L^p$ does not actually require any regularity assumption on $\Phi$ if it is measure preserving, it only requires that $R$ is bounded from $L^p$ to $L^p$.
\end{rema}

We recall here the Littlewood-Paley decomposition. 
We define
$\mathcal{C}$ to be the ring of center
$0$, of small radius $1/2$ and great radius $2$. There exist two
nonnegative  radial
functions $\chi$ and $\phi$ belonging respectively to $C_0^\infty(B(0,1)) $ and to
$ C_0^\infty(\mathcal{C}) $ so that
\begin{equation}
\chi(\xi) + \sum_{q\geq 0} \phi (2^{-q}\xi) = 1,
\end{equation}
\begin{equation}
|p-q|\geq 2
\Rightarrow
\Supp\ \phi(2^{-q}\cdot)\cap \Supp\ \phi(2^{-p}\cdot)=\emptyset.
\end{equation}

\noindent
For instance, one can take $\chi \in C_{0}^\infty (B(0,1))$ such that $
\chi  \equiv 1 $ on $B(0,1/2)$ and take
$$
\phi(\xi) = \chi(\xi/2) -
\chi(\xi).
$$
Let us denote
by $\mathcal{F}$ the Fourier transform on $\R^d$. Let
$h,\
\wt h,\  \D_q,\  S_q$ ($q \in \Z$) be defined as follows:
%\noindent{\bf Notations}

$$h = { \mathcal{F}}^{-1}\phi\quad {\rm and}\quad \wt h =
{\mathcal F}^{-1}\chi,$$ $$
\D_qu = {\mathcal F}^{-1}(\phi(2^{-q}\xi)\mathcal F u) = 2^{qd}\int h(2^qy)u(x-y)dy, $$ $$
S_qu
=\mathcal{F}^{-1}(\chi(2^{-q}\xi)\mathcal{F} u) =2^{qd} \int \wt h(2^qy)u(x-y)dy.
$$

We point out that $S_q u = \sum_{q'\leq q-1} \D_{q'} u $. 
We define the inhomogeneous Besov spaces by

\begin{defi}
Let $s$ be a real number, p and r two real numbers greater than~$1$.
Then we define the following norm
$$
|u|_{ B^s_{p,r}} \equiv |S_0u|_{L^p}+\Big| \left (2^{qs} |\Delta
_qu|_{L^p}\right)_{q\in \N}\Big|_{\ell^r(\N)}.
$$
\end{defi}

\begin{defi}${\atop}$
Let $s$ be a real number, p and r two real numbers greater than~$1$.
We denote by~$ B^s_{p,r}$ the space of tempered distributions~$u$ such
that~$\|u\|_{ B^s_{p,r}}$ is finite.
\end{defi}
We refer to \cite{BCD} for the
proof of the following results:
\begin{lemm}
    $$  | \D_q u |_{L^b} \leq 2^{d({1 \over a} -{1 \over b})q } |\D_q u
|_{L^a}
\quad{\rm for }\ b\geq a \geq 1
$$
\end{lemm}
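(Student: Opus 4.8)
This is the classical Bernstein inequality for Littlewood--Paley blocks, and the plan is to combine the frequency localization of $\D_q u$ with Young's convolution inequality. First I would fix an auxiliary multiplier: since $\phi$ is supported in the ring $\mathcal{C}$, choose $\Psi \in C_0^\infty(\R^d)$ with $\Psi \equiv 1$ on a neighborhood of $\mathcal{C}$, so that $\Psi(2^{-q}\xi)\phi(2^{-q}\xi) = \phi(2^{-q}\xi)$ for every $q \in \Z$. Because $\mathcal{F}(\D_q u)$ is supported in $2^q\mathcal{C}$, this yields the reproducing formula $\D_q u = \Psi(2^{-q}\DD)\D_q u$, i.e., in physical space,
$$\D_q u(x) = 2^{qd}\int \psi(2^q y)\,\D_q u(x-y)\,dy, \qquad \psi := \mathcal{F}^{-1}\Psi .$$

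Next I would apply Young's inequality $|k * g|_{L^b}\le |k|_{L^c}|g|_{L^a}$ with exponents related by $1+\tfrac1b=\tfrac1c+\tfrac1a$. Since $b\ge a\ge 1$ we have $\tfrac1a-\tfrac1b\in[0,1]$, so $c:=\bigl(1+\tfrac1b-\tfrac1a\bigr)^{-1}$ is a legitimate exponent in $[1,\infty]$ (the endpoints $a=1$, $b=\infty$ being automatically covered). Taking $k(y)=2^{qd}\psi(2^q y)$ and $g=\D_q u$, and using the dilation identity $|2^{qd}\psi(2^q\cdot)|_{L^c}=2^{qd(1-1/c)}|\psi|_{L^c}=2^{qd(1/a-1/b)}|\psi|_{L^c}$, I obtain
$$|\D_q u|_{L^b}\le |\psi|_{L^c}\,2^{qd(1/a-1/b)}\,|\D_q u|_{L^a}.$$

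It remains to handle the prefactor. Since $\Psi$ is smooth and compactly supported, $\psi=\mathcal{F}^{-1}\Psi$ is Schwartz, hence $\sup_{1\le c\le\infty}|\psi|_{L^c}<\infty$; this bound depends only on the choice of $\Psi$ (hence only on $d$), not on $q$ or on $a,b$, which is exactly what is needed for the estimate to hold with an absolute constant, uniformly over all admissible pairs $(a,b)$. I expect this uniform kernel bound --- one estimate covering all $(a,b)$ at once --- to be the only point requiring any care; the rest is the scaling bookkeeping above, and no genuine endpoint case distinction arises because $c$ never leaves $[1,\infty]$.
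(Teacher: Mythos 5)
Your argument is correct and is exactly the standard proof of this Bernstein inequality: the paper gives no proof of its own, deferring to \cite{BCD}, where the same reproducing-kernel identity $\Delta_q u = 2^{qd}\psi(2^q\cdot)\ast\Delta_q u$ followed by Young's inequality and the dilation bookkeeping is used. The only (harmless) discrepancy is that your estimate carries the prefactor $|\psi|_{L^c}$, uniformly bounded in $q$, $a$, $b$, whereas the paper's statement is written with constant $1$; the constant is immaterial for the embeddings deduced from the lemma, so nothing is lost.
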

The following corollaries is straightforward:
\begin{cor}
{\sl
If~$b\geq a\geq 1$, then, we have the following continuous embeddings
\[
B^s_{a,r} \subset
B^{s-d\Bigl( \frac 1 a-\frac 1 b\Bigr)}_{b,r}.
\]
}
\end{cor}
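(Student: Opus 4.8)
The plan is to read off the embedding directly from the Bernstein-type inequality of Lemma~2.7, with no extra machinery. Fix $b\geq a\geq 1$, set $\sigma:=s-d\bigl(\tfrac1a-\tfrac1b\bigr)$, and recall that by definition
\[
|u|_{B^\sigma_{b,r}}=|S_0u|_{L^b}+\Bigl|\bigl(2^{q\sigma}|\D_qu|_{L^b}\bigr)_{q\in\N}\Bigr|_{\ell^r(\N)} .
\]
I would estimate the two terms on the right separately, reducing the statement to bounding each by a constant multiple of $|u|_{B^s_{a,r}}$.

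For the high-frequency sum, each block $\D_qu$ has Fourier support in the annulus $2^q\mathcal{C}$, so Lemma~2.7 applies and gives $|\D_qu|_{L^b}\leq 2^{dq(1/a-1/b)}|\D_qu|_{L^a}$. Multiplying by $2^{q\sigma}$ and using the identity $\sigma+d(1/a-1/b)=s$ yields $2^{q\sigma}|\D_qu|_{L^b}\leq 2^{qs}|\D_qu|_{L^a}$ for every $q\in\N$; taking $\ell^r(\N)$ norms, which is monotone in the sequence, bounds this term by $\bigl|(2^{qs}|\D_qu|_{L^a})_q\bigr|_{\ell^r(\N)}\leq |u|_{B^s_{a,r}}$.

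The low-frequency term is treated the same way. Since $S_0u=\int\wt h(y)u(x-y)\,dy$ has Fourier support in the fixed ball $B(0,1)$, the argument underlying Lemma~2.7 — convolution with the fixed Schwartz function $\wt h$ followed by Young's inequality — gives $|S_0u|_{L^b}\leq C|S_0u|_{L^a}\leq C|u|_{B^s_{a,r}}$. Adding the two contributions completes the proof, and the constant is manifestly independent of $u$, $r$, and $s$. I do not expect a genuine obstacle here; the only point worth a line of comment is that Lemma~2.7 is phrased for the annular pieces $\D_q$, so one should note explicitly that the ball-supported piece $S_0u$ satisfies the analogous Bernstein inequality — after that the embedding is immediate.
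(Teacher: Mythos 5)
Your argument is correct and is exactly the route the paper intends: the corollary is stated as an immediate consequence of the Bernstein-type inequality (Lemma 2.3 in the paper's numbering, which you cite as Lemma 2.7), applied blockwise with the exponent identity $\sigma+d(\tfrac1a-\tfrac1b)=s$, plus the standard observation that the ball-supported piece $S_0u$ obeys the analogous inequality via Young's inequality. No gaps; this matches the paper's (unwritten, ``straightforward'') proof.
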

\begin{cor}
$$B^{a}_{p,1} \subset L^\infty$$ if $ar=n$ and the imbedding is continuous. 
\end{cor}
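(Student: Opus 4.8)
The plan is to read off the $L^\infty$ bound blockwise from the Littlewood--Paley decomposition, converting each $L^p$ estimate into an $L^\infty$ one by means of Bernstein's inequality; the summation index $r=1$ in $B^a_{p,1}$ is exactly what makes the resulting series converge. Let $u\in B^a_{p,1}$ with $ap=d$. First I would write
\[
u \;=\; S_0 u + \sum_{q\geq 0}\Delta_q u,
\]
an identity of tempered distributions coming from the partition of unity $\chi(\xi)+\sum_{q\geq 0}\phi(2^{-q}\xi)\equiv 1$, and then estimate, at least formally,
\[
|u|_{L^\infty}\;\leq\; |S_0 u|_{L^\infty} + \sum_{q\geq 0}|\Delta_q u|_{L^\infty}.
\]

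For the high frequencies, each $\Delta_q u$ has spectrum in the annulus $2^q\mathcal{C}$, so Bernstein's inequality (the preceding lemma) applied with $b=\infty$ and $a=p$ gives
\[
|\Delta_q u|_{L^\infty}\;\leq\; C\,2^{dq/p}\,|\Delta_q u|_{L^p}\;=\; C\,2^{aq}\,|\Delta_q u|_{L^p},
\]
where I used $ap=d$. For the low-frequency piece $S_0 u$, whose spectrum lies in the ball $B(0,1)$, I would invoke the ball version of Bernstein: choosing a fixed $g\in\mathcal{S}$ with $\widehat{g}\equiv 1$ on $B(0,1)$ one has $S_0 u = g* S_0 u$, whence Young's inequality yields $|S_0 u|_{L^\infty}\leq |g|_{L^{p'}}\,|S_0 u|_{L^p}\leq C\,|S_0 u|_{L^p}$, with $p'$ the conjugate exponent (note $g\in\mathcal{S}\subset L^{p'}$). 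Summing the two contributions,
\[
|u|_{L^\infty}\;\leq\; C\Big(|S_0 u|_{L^p} + \sum_{q\geq 0}2^{aq}|\Delta_q u|_{L^p}\Big)\;=\; C\,|u|_{B^a_{p,1}},
\]
the last equality being the definition of the $B^a_{p,1}$ norm with $r=1$ (so that $\ell^r=\ell^1$). The very same chain of inequalities shows that the series $S_0 u+\sum_{q\geq 0}\Delta_q u$ converges absolutely in $L^\infty$; since it already converges to $u$ in $\mathcal{S}'$, we conclude $u\in L^\infty$ together with the stated bound, i.e.\ continuity of the embedding.

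I do not expect a genuine obstacle: once Bernstein is available this is a one-line summation, and crucially \emph{no} decay in $q$ is needed, since the only summation mechanism is the $\ell^1$ norm built into the definition of $B^a_{p,1}$. The single point worth being careful about is the low-frequency term $S_0 u$: the lemma is stated only for blocks with annular spectral support, so one must separately supply the elementary Bernstein estimate on a ball (done above, or citable from \cite{BCD}). Finally, the role of the endpoint index $r=1$ deserves emphasis, since for $r>1$ the argument genuinely breaks down — consistent with the classical fact that $B^{d/p}_{p,r}\not\subset L^\infty$ when $r>1$.
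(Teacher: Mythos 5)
Your proof is correct and follows essentially the same route as the paper: the paper first invokes Corollary 2.4 (which is exactly the blockwise Bernstein estimate you apply directly, giving $B^{a}_{p,1}\subset B^{0}_{\infty,1}$ when $ap=d$) and then sums the Littlewood--Paley blocks in $L^\infty$ using the $\ell^1$ summation, just as you do. Your explicit treatment of the low-frequency block $S_0 u$ via the ball version of Bernstein is a welcome bit of extra care that the paper leaves implicit, but it is not a different argument.
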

\begin{rema}
Note that the importance of the spaces $B^{a}_{r,1}$ for our context is that singular integrals are generally bounded on these spaces and when $ar=n$, these spaces have the same scaling as $L^\infty$. 
\end{rema}
We recall that singular integral operators are bounded on Besov spaces. 
\begin{lemm} \cite{BCD} \label{CZBoundedness} If $R$ is a Calder\'{o}n-Zygmund singular integral operator with a kernel $K(x) = \frac{\Omega(\frac{x}{|x|})}{|x|^d},$ with $\Omega$ smooth and mean-zero on the unit-sphere, then $R$ is bounded from $B^{a}_{r,q}$ to itself for all $a\geq 0,r\geq 1,$ and $q\geq 1.$   
\end{lemm}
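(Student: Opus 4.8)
The plan is to realize $R$ as a Fourier multiplier and then read the bound off the Littlewood--Paley characterization of the Besov norm, handling the low--frequency block $S_0(Rf)$ and the high--frequency blocks $\Delta_j(Rf)$, $j\geq 0$, separately. The input is the classical structure theorem for such kernels: if $K(x)=\Omega(x/|x|)/|x|^d$ with $\Omega$ smooth and of mean zero on the unit sphere, then $Rf=m(\DD)f$, where $m(\DD)$ denotes the Fourier multiplier with symbol $m$, and $m$ is a bounded function that is $C^\infty$ on $\R^d\setminus\{0\}$ and homogeneous of degree $0$. In particular $m$ obeys the Mikhlin--H\"ormander bounds $|\xi|^{|\alpha|}|\partial^\alpha m(\xi)|\leq C_\alpha$ for every multi-index $\alpha$, so $R$ is bounded on $L^\rho$ for each $1<\rho<\infty$, and, being a Fourier multiplier, $R$ commutes with all the operators $S_0$ and $\Delta_j$.

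For the low--frequency block one simply writes $S_0(Rf)=R(S_0f)$ (two commuting multipliers) and uses the classical $L^\rho$ bound for $R$ together with the trivial inequality $|S_0f|_{L^\rho}\leq|f|_{B^a_{\rho,q}}$, to obtain $|S_0(Rf)|_{L^\rho}=|R(S_0f)|_{L^\rho}\leq C|S_0f|_{L^\rho}\leq C|f|_{B^a_{\rho,q}}$.

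For the high--frequency blocks I would localize. Fix once and for all a smooth function $\widetilde\phi$, supported away from the origin, with $\widetilde\phi\equiv1$ on $\Supp\phi$, and let $\widetilde\Delta_j$ be the Fourier multiplier with symbol $\widetilde\phi(2^{-j}\xi)$, so that $\widetilde\Delta_j\Delta_j=\Delta_j$. Then for $j\geq0$,
\[
\Delta_j(Rf)=R(\Delta_j f)=R\,\widetilde\Delta_j\,\Delta_j f,
\]
and $R\widetilde\Delta_j$ is the Fourier multiplier with symbol $m(\xi)\widetilde\phi(2^{-j}\xi)$. The key point is that, since $m$ is homogeneous of degree $0$, the change of variable $\xi\mapsto2^j\xi$ carries $m(\xi)\widetilde\phi(2^{-j}\xi)$ onto the single, $j$--independent symbol $m(\xi)\widetilde\phi(\xi)$; consequently $R\widetilde\Delta_j$ is convolution with the kernel $x\mapsto2^{jd}g(2^jx)$, where $g=\mathcal{F}^{-1}(m\widetilde\phi)$ is a fixed Schwartz function (the symbol $m\widetilde\phi$ being smooth and compactly supported away from $0$); this kernel lies in $L^1$ with norm $|g|_{L^1}$, independent of $j$. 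Young's inequality then gives, uniformly in $j\geq0$ and for every $\rho\geq1$,
\[
|\Delta_j(Rf)|_{L^\rho}\leq|g|_{L^1}\,|\Delta_j f|_{L^\rho}.
\]
Multiplying by $2^{aj}$ and taking the $\ell^q(\N)$ norm in $j$ bounds the high--frequency part of $|Rf|_{B^a_{\rho,q}}$ by $|g|_{L^1}|f|_{B^a_{\rho,q}}$; together with the low--frequency estimate this yields $|Rf|_{B^a_{\rho,q}}\leq C|f|_{B^a_{\rho,q}}$.

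I expect the only real work to be the high--frequency step, and the delicate point there is the uniformity of the constant in $j$: it holds precisely because the degree--zero homogeneity of $m$ makes all the truncated symbols $m\,\widetilde\phi(2^{-j}\cdot)$ dilates of one another, so that the associated convolution kernels share a common $L^1$ bound. The low--frequency step is the only place where the classical Calder\'on--Zygmund $L^\rho$ theory is used, and hence also the only place where the endpoints $\rho\in\{1,\infty\}$ would need a separate argument (the high--frequency estimate above already covers all $\rho\geq1$; on homogeneous Besov spaces, where there is no low--frequency block, one gets all $\rho\in[1,\infty]$). Finally, the argument uses nothing about the sign of $a$, so it in fact proves the statement for every $a\in\R$.
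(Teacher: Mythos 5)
The paper gives no argument for this lemma at all --- it is simply quoted from \cite{BCD} --- so there is no internal proof to compare against; what you wrote is, in substance, the standard proof behind that citation, and it is correct in the range the paper actually uses. Both of your steps are sound: since $\Omega$ is smooth and mean-zero, $R=m(D)$ with $m$ bounded, homogeneous of degree zero and smooth away from the origin; the dilation observation makes the truncated symbols $m\,\widetilde\phi(2^{-j}\cdot)$ rescalings of the single symbol $m\widetilde\phi\in C^\infty_c(\mathbb{R}^d\setminus\{0\})$, giving a $j$-uniform $L^1$ bound on the kernels and hence the high-frequency estimate for every $\rho\in[1,\infty]$, while the block $S_0$ is handled by the $L^\rho$ theory for $1<\rho<\infty$. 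One sharpening of your endpoint caveat: it is not merely that your low-frequency step would need a separate argument at $\rho\in\{1,\infty\}$ --- for the \emph{inhomogeneous} spaces the statement genuinely fails there. Indeed, the kernel $G=\mathcal{F}^{-1}(\chi m)$ of $S_0R$ satisfies $G(x)=c\,K(x)+O(|x|^{-d-1})$ as $|x|\to\infty$ with $c\neq0$ (write $G=(\mathrm{pv}\,K)\ast\mathcal{F}^{-1}\chi$ plus a multiple of $\mathcal{F}^{-1}\chi$, and use the mean-zero property of $\Omega$ to control the near-singular piece), so $G\notin L^1$ unless $\Omega\equiv0$; taking $f$ Schwartz with $\hat f\equiv1$ on the support of $\chi$ gives $S_0(Rf)=G\notin L^1$, hence $R$ is not bounded on $B^{a}_{1,q}$, and dually one loses $\rho=\infty$ as well. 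So the lemma's ``$\rho\geq1$'' should be read as $1<\rho<\infty$ (or one must pass to homogeneous Besov spaces, where your high-frequency argument alone suffices); this is harmless for the paper, which only invokes the lemma for $B^{1/2}_{2d,1}$ and $B^{1/2}_{4,1}$, and your proof is complete exactly in that range.
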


\addtocontents{toc}{\SkipTocEntry}

\subsection{The main (linear) result}

\begin{thm}     

\label{LowerBoundTheorem}

Let $n\geq 2$ and $u$ be a given divergence-free Lipschitz function, $u\in L^\infty([0,\delta];\text{Lip})$ for some $\delta>0$. Suppose that $R$ is a Calder\'{o}n-Zygmund singular integral operator in the sense of Lemma \ref{CZBoundedness}. 
Let $f_0\in L^1 \cap B^{\frac{1}{2}}_{2d,1}(\mathbb{R}^n)$  and let $f$ be the unique solution\footnote{See Proposition \ref{LWPBesov}.} of \eqref{LinearRTP1} with initial data $f_{0}.$
Then there exist two constants $C,c>0$ independent of $u$ and $f_0$  such that for all $0<t<\frac{c}{|u|_{L^\infty \text{Lip}}},$ we have 

\begin{equation} \label{LowerBound} |f(t)|_{L^\infty} \geq |tR(f_0)+f_{0}|_{L^\infty} -Ct^2( 1+ |u|_{L^\infty \text{Lip}} \exp(tC|u|_{L^\infty \text{Lip}}))|f_0|_{B^{\frac{1}{2}}_{2d,1}}. \end{equation}

\end{thm}

\begin{rema} Note that estimate \eqref{LowerBound} is only useful for $t$ small as the right-hand side of the estimate is negative for $t$ large. \end{rema}

The proof of Theorem \ref{LowerBoundTheorem} is based upon a non-trivial commutator estimate. 

\section{Proof of the Linear Theorem  \ref{LowerBoundTheorem}}

The subject of this section is the proof of Theorem \ref{LowerBoundTheorem}. The proof is based upon a non-trivial commutator estimate which we now present. 

\addtocontents{toc}{\SkipTocEntry}

\subsection{The Commutator Estimate}

\begin{prop} 
\label{CommutatorEstimate}
Let $n\geq 2.$ Let $\Phi$ be a bi-Lipschitz mapping from $\mathbb{R}^{n}$ to $\mathbb{R}^{n}$ which is measure preserving.   
Define the following commutator operating on $L^2 :$
$$[R,\Phi] \omega = R(\omega \circ \Phi) - R(\omega)\circ \Phi,  $$ where $R$ is as in Lemma 2.6. 
Let $0<a<1$ and $1<\rho<\infty.$ Then $[R,\Phi] : B^{a}_{\rho,1} \rightarrow B^{a}_{\rho,1}$ is bounded. Furthermore, there exists a universal constant $c$ depending only upon $R$ such that if 
$$M:=\max \{ |\Phi-Id|_{\text{Lip}},|\Phi^{-1}-Id|_{\text{Lip}} \}\leq c,$$
where $Id$ is the identity matrix, then,
\begin{equation} \label{Commutator} |[R,\Phi]\omega|_{B^{a}_{p,1}} \leq C M |\omega|_{B^{a}_{p,1}}, \end{equation}
with the constant $C$ depending only upon $|\Phi|_{Lip},$ $|\Phi^{-1}|_{Lip},$ the dimension $n,$ and the operator $R$.
\end{prop}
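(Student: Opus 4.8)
The plan is to prove the commutator bound by working on each Littlewood–Paley block and exploiting the fact that $\Phi$ is a small perturbation of the identity. Fix a dyadic block $\Delta_q$ and write
$$\Delta_q[R,\Phi]\omega = \Delta_q R(\omega\circ\Phi) - \Delta_q\big((R\omega)\circ\Phi\big).$$
The first term is harmless: since $R$ is bounded on $B^a_{\rho,1}$ by Lemma 2.6 and composition with a measure-preserving bi-Lipschitz map is bounded on $B^a_{\rho,1}$ for $0<a<1$ (a quasi-isometry acting on a Besov space below Lipschitz regularity), the difficulty is entirely in comparing $\Delta_q R(\omega\circ\Phi)$ with $\Delta_q((R\omega)\circ\Phi)$ \emph{after} subtraction, i.e. in capturing the gain by a factor $M$. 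The natural device is to interpolate: write $\Phi_s = Id + s(\Phi - Id)$, and more usefully consider $R\big(\omega\circ\Phi\big)\circ\Phi^{-1} - R(\omega)$, which is the conjugated commutator $\Phi^{-1}_*[R,\Phi]\omega$; by the boundedness of composition on $B^a_{\rho,1}$ it suffices to bound this conjugated object. Then one uses the integral (kernel) representation of $R$ and performs the change of variables $y\mapsto \Phi(y)$ (allowed since $\Phi$ is measure-preserving) to obtain a single operator with kernel $K\big(\Phi(x)-\Phi(y)\big) - K(x-y)$ acting on $\omega$, and the mean-value theorem gives that this difference kernel is $O(M)$ times a Calderón–Zygmund-type kernel of the same homogeneity. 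This is the conceptual heart of the argument.

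Concretely, I would carry out the following steps. First, reduce (2.2) to the conjugated commutator $\widetilde{T}\omega := R(\omega\circ\Phi)\circ\Phi^{-1} - R\omega$, using Lemma 2.6 and the fact that for $0<a<1$ and $1<\rho<\infty$, $g\mapsto g\circ\Psi$ is bounded on $B^a_{\rho,1}$ with norm controlled by $|\Psi|_{Lip}, |\Psi^{-1}|_{Lip}$ — this last point itself needs a short Littlewood–Paley argument (decompose $g$, note $\Delta_q g\circ\Psi$ has spectrum essentially in an annulus of comparable size up to the bi-Lipschitz distortion, and use the $\ell^1$ summability in $q$ together with a Bernstein/quasi-orthogonality estimate). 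Second, write $\widetilde{T}\omega(x) = \int \big[K\big(\Phi(x)-\Phi(y)\big) - K(x-y)\big]\,\omega(y)\,dy$ (after the measure-preserving change of variable), decompose $\Phi(x)-\Phi(y) = (x-y) + r(x,y)$ with $|r(x,y)|\le M|x-y|$, and Taylor-expand $K$ to first order: $K(\Phi(x)-\Phi(y)) - K(x-y) = \nabla K(x-y)\cdot r(x,y) + O\big(M^2\|x-y\|^{-d}\big)$, valid because $|r|\le M|x-y|$ with $M$ small keeps us away from the singularity scale. Third, recognize $\nabla K$ as homogeneous of degree $-(d+1)$, so $|\nabla K(x-y)\cdot r(x,y)|\lesssim M|x-y|^{-d}$, and decompose $r(x,y) = \int_0^1 \nabla(\Phi - Id)(x + s(y-x))\,ds\,(y-x)$; this writes $\widetilde{T}$ as a superposition of kernels of the form $\nabla K(x-y)\otimes(x-y)$ (a legitimate Calderón–Zygmund kernel, mean-zero and homogeneous of degree $-d$) composed with multiplication by the bounded-but-only-Lipschitz field $\nabla(\Phi-Id)$ evaluated along segments. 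Fourth, bound these pieces on $B^a_{\rho,1}$: the paraproduct/product rule $|gh|_{B^a_{\rho,1}}\lesssim |g|_{Lip}|h|_{B^a_{\rho,1}}$ for $0<a<1$ handles the multiplication by $\nabla(\Phi-Id)$ (contributing the factor $M$, since $|\nabla(\Phi-Id)|_{L^\infty}\le M$), and Lemma 2.6 handles the singular integral, giving the claimed $M|\omega|_{B^a_{\rho,1}}$.

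The main obstacle I expect is the rigorous treatment of the "segment" variable in $r(x,y) = \int_0^1 \nabla(\Phi-Id)(x+s(y-x))\,ds\,(y-x)$: this is \emph{not} an operator of the simple form (singular integral) $\circ$ (pointwise multiplication), because the multiplier $\nabla(\Phi-Id)$ is evaluated at a point that depends on both $x$ and $y$. One clean way around this is to keep the auxiliary integration $\int_0^1 ds$ outside, change variables inside the $y$-integral for each fixed $s$ so that the argument of $\nabla(\Phi-Id)$ becomes a single variable, and thereby realize $\widetilde{T}$ (up to the $O(M^2)$ remainder and lower-order terms from the Jacobian, which is $1$ here since $\Phi$ is measure-preserving) as an average over $s\in[0,1]$ of operators $\omega\mapsto R_s\big((\nabla(\Phi-Id))\,\omega_s\big)$ where $R_s$ has a CZ kernel uniformly in $s$; then the Besov bounds apply uniformly and one integrates in $s$. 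A secondary technical point is justifying the first-order Taylor expansion of $K$ globally (not just near the singularity): for $|x-y|$ bounded away from $0$ the kernel $K$ and its derivatives decay like $|x-y|^{-d}$ and $|x-y|^{-d-1}$, so the expansion is uniformly controlled; for $|x-y|\to 0$ one uses $|r|\le M|x-y|$ to stay within the region where the second-order Taylor remainder is $\lesssim M^2|x-y|^{-d}$, and the smallness $M\le c$ is exactly what makes this term absorbable. Handling the $O(M^2)$ remainder kernel requires checking it is itself of weak-CZ type (homogeneous degree $-d$, and effectively mean-zero after the $s$-averaging argument), which is a routine but not entirely automatic verification.
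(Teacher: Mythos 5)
There is a genuine gap, and it sits exactly where you flag your ``main obstacle.'' After the measure-preserving change of variables, the conjugated commutator has kernel $K(\Phi(x)-\Phi(y))-K(x-y)$, and your Taylor expansion turns the leading term into
\begin{equation*}
\omega\longmapsto\int \nabla K(x-y)\cdot\Big(\int_0^1\nabla\Psi\big(x+s(y-x)\big)\,ds\Big)(y-x)\,\omega(y)\,dy,\qquad \Psi=\Phi-Id,
\end{equation*}
which is precisely a Calder\'on-commutator-type operator with an $L^\infty$ symbol evaluated along the segment joining $x$ and $y$. The size and gradient bounds of this kernel (each $O(M)$ times a kernel of CZ homogeneity) do \emph{not} by themselves give $L^2$ or Besov boundedness; that is the whole content of Calder\'on's theorem on the first commutator and its successors. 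Your proposed fix --- freeze $s$, substitute $z=x+s(y-x)$ so that $\nabla\Psi$ depends on a single variable --- does not produce an operator of the form $R_s\big((\nabla\Psi)\,\omega_s\big)$: by homogeneity the kernel factor is fine, but the density becomes $\omega\big(x+(z-x)/s\big)$, i.e.\ $\omega$ precomposed with an $x$-dependent dilation, so the operator is not (singular integral)$\circ$(multiplication) and the product rule $|gh|_{B^a_{\rho,1}}\lesssim|g|_{Lip}|h|_{B^a_{\rho,1}}$ plus Lemma 2.6 cannot be applied. If this reduction worked it would make Calder\'on's theorem a two-line exercise, which is a sign the step cannot stand as written. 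The same objection applies to your $O(M^2|x-y|^{-d})$ remainder: a kernel with that size bound alone does not define a bounded operator, and establishing the needed cancellation for the full tail is essentially the Calder\'on program (Cauchy integral on Lipschitz curves), not a routine verification.

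The paper does not attempt this direct expansion. It imports the hard analytic input as a black box: Murray's theorem gives the $L^2$ bound $\|[R,\Phi]\|_{L^2\to L^2}\lesssim M$ for $\Phi$ near the identity. It then verifies exactly the kernel estimates you derived ($|J|\lesssim M|x-y|^{-d}$, $|\nabla J|\lesssim M|x-y|^{-d-1}$) and uses the standard Calder\'on--Zygmund theorem ($L^2$ bound plus standard kernel implies $L^p$ bounds with controlled constant) to get the $L^p$ estimate with the factor $M$ preserved. The $W^{1,p}$ estimate is obtained by differentiating the commutator, splitting off a multiplication commutator handled by Coifman--Rochberg--Weiss, and reusing the $L^p$ bound; the $B^a_{\rho,1}$ case for $0<a<1$ then follows by real interpolation. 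So your kernel computations and the smallness mechanism $M\le c$ are aligned with the paper, but your argument is missing the one ingredient that cannot be produced by Taylor expansion and changes of variables: a Calder\'on-commutator-type $L^2$ (or $L^p$) boundedness theorem with operator norm $O(M)$, which must be either cited (as the paper does with Murray) or proved by $T(1)$/$T(b)$-type technology.
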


We first would like to relate this proposition to a well-known problem in harmonic analysis, which is the boundedness of the Calder\'{o}n commutators. Indeed, if we write $$ [R,\Phi]\omega\circ \Phi^{-1}= R(\omega \circ \Phi)\circ \Phi^{-1} - R(\omega) = K \star (\omega\circ \Phi) \circ \Phi^{-1} - K\star \omega. $$ Now, using the fact that $\Phi$ is measure preserving, we get   
$$ [R,\Phi]\omega\circ \Phi^{-1} (x)= \int_{\mathbb{R}^{d}} \Big[K(\Phi^{-1} (x)-\Phi^{-1} (y)) -K(x-y)\Big] \omega(y)dy.  $$
In the special case of the Hilbert transform, we get $$[H,\Phi]\omega \circ\Phi^{-1} (x) =\int_{\mathbb{R}}\Big[ \frac{1}{\Phi^{-1} (x)-\Phi^{-1} (y)} -\frac{1}{x-y} \Big]\omega(y) dy $$
$$ =\int_{\mathbb{R}}[ \frac{x-y}{\Phi^{-1} (x)-\Phi^{-1} (y)} -1 ]\frac{\omega(y)}{x-y} dy $$
$$=\int_{\mathbb{R}} \frac{(I-\Phi^{-1})(x) - (I-\Phi^{-1})(y)}{x-y} \cdot \frac{x-y}{\Phi^{-1} (x)-\Phi^{-1} (y)} \cdot \frac{\omega(y)}{x-y}dy. $$ 
Thus, we see that in order to estimate this commutator, we would need to estimate operators of the form:
$$T_{A}(\omega) = \int_{\mathbb{R}} F (\frac{A(x)-A(y)}{x-y}) \frac{\omega(y)}{x-y}dy,$$ where $A$ is a Lipschitz function. Estimates of this type have been studied in $L^p$ spaces by many authors. We refer the reader to the recent book of Muscalu and Schlag \cite{MuscaluSchlag} and the references therein. Fortunately, due to the large literature on these operators, we will be able to use some of the existing results to prove estimate \eqref{Commutator}.   In particular, in \cite{Murray}, Murray proved the lemma in the $L^2$ case. Here, we show how the modern theory of Calder\'on-Zygmund operators is used to extend her result to the $L^p$ case. This is, admittedly, a simple exercise for the specialists.

We recall here a simple consequence of Murray's theorem (Theorems 6.1 and 6.2 of \cite{Murray}):
\begin{thm} \label{Murray}(Murray, 1985) Proposition \ref{CommutatorEstimate} holds with $B^{a}_{p,1}$ replaced by $L^2.$
\end{thm}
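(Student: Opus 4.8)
The plan is to reduce the statement to the $L^2$ theory of Calder\'on commutators and then read off the bound from Theorems~6.1 and 6.2 of \cite{Murray}. First I would use the measure-preserving change of variables $y\mapsto\Phi(y)$ already carried out above: since $\omega\mapsto\omega\circ\Phi$ is an isometry of $L^2(\mathbb{R}^d)$, bounding $[R,\Phi]$ on $L^2$ with norm $\lesssim M$ is equivalent to bounding on $L^2$ the operator
\[
\mathcal{T}\omega(x)=\int_{\mathbb{R}^d}\mathcal{K}(x,y)\,\omega(y)\,dy,\qquad
\mathcal{K}(x,y)=K\big(\Phi^{-1}(x)-\Phi^{-1}(y)\big)-K(x-y),
\]
with $K(z)=\Omega(z/|z|)\,|z|^{-d}$ and no Jacobian factor, precisely because $\Phi$ is measure preserving. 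Writing $\Phi^{-1}=Id+a$ one has $|a|_{Lip}\le M$, and for $x\ne y$, with $z=x-y$ and $h=a(x)-a(y)$, $\Phi^{-1}(x)-\Phi^{-1}(y)=z+h$ with $|h|\le M|z|$ pointwise.

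Next I would expand the kernel difference in the small perturbation $h$. Morally, Taylor-expanding $K(z+h)-K(z)$ and rewriting each factor $a(x)-a(y)$ as $\frac{a(x)-a(y)}{|x-y|}\cdot|x-y|$ presents $\mathcal{T}$ as a series whose $j$-th term is a finite combination of $d$-dimensional Calder\'on commutators of order $j$: a Calder\'on--Zygmund kernel homogeneous of degree $-d$ acted upon by $j$ divided differences of the single Lipschitz function $a$, the overall term being $O(M^{j})$ because each divided difference is $O(M)$. In the one-dimensional Hilbert-transform model computed in the text this is literally $\mathcal{T}\omega=\int F\!\big(\tfrac{a(x)-a(y)}{x-y}\big)\tfrac{\omega(y)}{x-y}\,dy$ with $F$ smooth and $F(0)=0$, so that $\mathcal{T}$ is a convergent superposition of the classical Calder\'on commutators; the higher-dimensional and general-$\Omega$ versions are exactly what \cite{Murray} treats. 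Theorems~6.1 and 6.2 of \cite{Murray} state, in this setting, that $\mathcal{T}$ is bounded on $L^2$ and that its norm depends on the Lipschitz map only through $\max\{|\Phi-Id|_{Lip},|\Phi^{-1}-Id|_{Lip}\}$ and vanishes as this quantity tends to $0$; more precisely the dependence is (real-)analytic near the identity with value $0$ there, so $\|\mathcal{T}\|_{L^2\to L^2}\lesssim M$ once $M$ is small enough.

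Thus, choosing $c$ to be the radius of that smallness window, for $M\le c$ we obtain $\|[R,\Phi]\|_{L^2\to L^2}\lesssim M$ with a constant depending only on $R$ and the dimension $n$ (under $M\le c$ the bi-Lipschitz constants of $\Phi$ are at most $1+c$, so they enter only through this bound). The step I expect to be the genuine obstacle is the underlying $L^2$ estimate for the order-$j$ Calder\'on commutator with only \emph{polynomial} --- not factorial --- growth of the constant in $j$, since this is what makes the superposition defining $\mathcal{T}$ converge and yields the linear-in-$M$ bound; in one dimension this goes back to Calder\'on and to Coifman--McIntosh--Meyer, and in the $\mathbb{R}^n$ generality needed here it is precisely the content of Murray's Theorems~6.1 and 6.2, which we therefore use as a black box. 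The smallness hypothesis $M\le c$ is used both to legitimize the expansion ($|h|<|z|$) and to sum the resulting series.
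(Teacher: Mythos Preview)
Your proposal is correct and aligns with the paper's treatment: Theorem~3.2 is not proved in the paper but is quoted as a consequence of Murray's Theorems~6.1 and~6.2, together with the remark that Murray in fact established analyticity of $\Phi\mapsto\Phi R\Phi^{-1}$ near the identity. Your sketch of the reduction---rewriting $[R,\Phi]$ via the measure-preserving change of variables as an integral operator with kernel $K(\Phi^{-1}(x)-\Phi^{-1}(y))-K(x-y)$ and expanding in the Lipschitz perturbation $a=\Phi^{-1}-Id$ to obtain a series of higher-order Calder\'on commutators---matches the heuristic the paper gives just before stating the theorem (the one-dimensional Hilbert-transform computation leading to $T_A$), and your identification of the polynomial-in-$j$ growth of the $j$-th commutator norm as the crucial input from \cite{Murray} is exactly the content being invoked.
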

In fact, Murray proved that the mapping $\Phi\rightarrow \Phi R\Phi^{-1}$ is \emph{analytic} for $\Phi$ in a neighborhood of the identity. We will use Theorem \ref{Murray} to prove Proposition \ref{CommutatorEstimate}. The strategy is as follows:

(1) Use Theorem \ref{Murray} and pass from the commutator estimate on $L^2$ to the same estimate on $L^p$; here we will have to be careful not to lose the $|\Phi-Id|_{Lip}$ factor when passing from the $L^2$ to the $L^p$ estimate.

(2) Use the $L^p$ estimate to prove a $W^{1,p}$ estimate. Here we will make use of the classical Coifman-Rochberg-Weiss commutator estimate, which will allow us to commute \emph{multiplication} by a bounded function and a Riesz operator. 

(3) Conclude using the method of real interpolation. 

\begin{proof}[Proof of Proposition 3.1]
The proof relies upon first observing that $[R,\Phi]$ is a \emph{linear} operator and thus, using results from the theory of interpolation, it suffices to show that $[R,\phi]$ satisfies estimate \eqref{Commutator} on $L^p$ and $W^{1,p}.$  Namely, it suffices to prove the following two inequalities:

$$|[R,\Phi]\omega|_{L^p} \lesssim M |\omega|_{L^p}$$
and
$$|[R,\Phi]\omega|_{W^{1,p}} \lesssim M|\omega|_{W^{1,p}},$$ for all $1<p<\infty.$

To pass from $L^2$ to $L^p,$ the work will go into writing our commutator in the form of a so-called standard kernel and proving various estimates on the kernel. As for the $W^{1,p}$ estimate, we show that this is a consequence of the $L^p$ estimate. 

\addtocontents{toc}{\SkipTocEntry}

\subsubsection{The $L^p$ estimate}

To get the $L^p$ estimate, we are first going to show that the commutator can be written as an operator with a so-called standard kernel \cite{Grafakos}. 
Indeed, define $T$ as follows:
\begin{equation} T(\omega):=[R,\Phi^{-1}]\omega\circ \Phi (x)= \int_{\mathbb{R}^{d}} J(x,y) \omega(y)dy,  \end{equation}
with $J(x,y):=K(\Phi (x)-\Phi (y)) -K(x-y).$\footnote{Note we have interchanged $\Phi^{-1}$ and $\Phi$ just for notational convenience; they play more or less the same role throughout the paper.}
Note that since $\Phi$ is measure preserving, $L^p$ estimates on $T$ and $L^p$ estimates on $[R,\Phi^{-1}]$ are equivalent. 

\vspace{3mm}

\noindent\emph{ Claim:}
\begin{equation} \label{JEstimate1} |J(x,y)|\leq C \frac{M}{|x-y|^n},\end{equation}
\begin{equation} \label{JEstimate2} |\nabla J(x,y)|\leq C \frac{M}{|x-y|^{n+1}},\end{equation}
where $M=\max \{|\Phi-Id|_{Lip},|\Phi^{-1}-Id|_{Lip} \}$ and with $C$ depending only on the dimension and on the Lipschitz norm of $\Phi$ and its inverse. 
\vspace{3mm}

\noindent\emph{Proof of Claim:}

\vspace{3mm}

\noindent Recall that $$K(x)=\frac{\rho(x)}{|x|^n}$$ where $\rho$ is smooth on the unit sphere. 
Therefore, $$J(x,y)=\frac{\rho(x-y)|\Phi(x)-\Phi(y)|^n-\rho(\Phi(x)-\Phi(y))|x-y|^n}{|x-y|^n|\Phi(x)-\Phi(y)|^n} $$
Let's define $$\Psi:=\Phi-Id.$$
Then, 
$$J(x,y)= \frac{\rho(x-y)|\Psi(x)-\Psi(y)-x-y|^n-\rho(\Phi(x)-\Phi(y))|x-y|^n}{|x-y|^n|\Phi(x)-\Phi(y)|^n}$$
$$=\frac{\rho(x-y)(|\Psi(x)-\Psi(y)-x-y|^n-|x-y|^n)+|x-y|^n(\rho(x-y)-\rho(\Phi(x)-\Phi(y)))}{|x-y|^n|\Phi(x)-\Phi(y)|^n}$$
$$:=I+II,$$ with
$$I:=\frac{\rho(x-y)\Big(|\Psi(x)-\Psi(y)-x-y|^n-|x-y|^n\Big)}{|x-y|^n|\Phi(x)-\Phi(y)|^n},$$
and $$II:=\frac{\Big (\rho(x-y)-\rho(x-y+\Psi(x)-\Psi(y))\Big )|x-y|^n}{|x-y|^n|\Phi(x)-\Phi(y)|^n}.$$
The fact that $I$ satisfies \eqref{JEstimate1}-\eqref{JEstimate2} is clear because:
$$|\Psi|_{Lip}\leq M, \quad\frac{|\Phi(x)-\Phi(y)|}{|x-y|}\lesssim 1,\,\, \text{and} \quad \frac{|x-y|}{|\Phi(x)-\Phi(y)|}\lesssim 1.$$
$II$ is similarly controlled because $\rho$ is smooth on the unit sphere. Here, we have used the following inequalities:

$$|\rho(a+b)-\rho(a)|\leq C |b|$$ and $$|a+b|^n-|a|^n\leq C(|a|^{n-1}|b|+|b|^n),$$ for a fixed constant $C$ only depending on $\rho.$
This completes the proof of the claim. 

Now that $J$ satisfies \eqref{JEstimate1}-\eqref{JEstimate2}, and since, by Theorem \ref{Murray}, an $L^2$ commutator estimate holds:
$$[R,\Phi]_{L^2\rightarrow L^2}\lesssim M,$$
we are in a position to pass to the $L^p$ estimate. Recall the following standard theorem \cite{Grafakos}:

\begin{prop}
Assume $J(x,y)$ satisfies 
$$|J(x,y)|\lesssim \frac{A}{|x-y|^n}$$
and 
$$|\nabla J(x,y)|\leq \frac{A}{|x-y|^{n+1}}.$$ Let $T$ be the singular integral operator associated with the kernel $J.$ Assume that
$$|T|_{L^2\rightarrow L^2}\leq B.$$
Then, $T$ maps $L^p$ to itself for $1<p<\infty.$ 
Moreover, 
$$|T|_{L^p\rightarrow L^p}\leq C(n)\max\{p,(p-1)^{-1}\}(A+B)$$
\end{prop}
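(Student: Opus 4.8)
The statement to be proved is the classical Calderón--Zygmund theorem (Proposition 3.3): an $L^2$-bounded operator with a standard kernel extends to an $L^p$-bounded operator for $1<p<\infty$, with the stated quantitative bound. Since the paper cites \cite{Grafakos} for this, the "proof" here will really be a guided recollection of the standard argument, with attention paid to keeping the dependence on $A$ and $B$ explicit and linear, since later in the paper we need the factor $M$ (through $A$) not to be lost.

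\medskip

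The plan is to proceed via the Calderón--Zygmund decomposition and a weak-type $(1,1)$ estimate, then interpolate. First I would establish the weak-type $(1,1)$ bound: given $\omega \in L^1$ and a height $\lambda>0$, apply the Calderón--Zygmund decomposition at height $\lambda$ to write $\omega = g + b$ with $g$ the good part ($|g|_{L^\infty}\lesssim \lambda$, $|g|_{L^1}\le |\omega|_{L^1}$) and $b = \sum_j b_j$ the bad part, each $b_j$ supported on a cube $Q_j$, mean-zero, with $\sum_j |Q_j| \lesssim \lambda^{-1}|\omega|_{L^1}$. For the good part one uses the $L^2$ bound $B$ and Chebyshev. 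For the bad part one uses the mean-zero property of $b_j$ together with the Hörmander-type cancellation condition on the kernel — which follows from the gradient estimate $|\nabla J(x,y)| \lesssim A|x-y|^{-n-1}$ by the mean value theorem — to control $T b_j$ away from a dilate $Q_j^*$ of $Q_j$, and one simply discards $\bigcup_j Q_j^*$, whose measure is $\lesssim \lambda^{-1}|\omega|_{L^1}$. This yields $|\{|T\omega|>\lambda\}| \lesssim (A+B)\lambda^{-1}|\omega|_{L^1}$.

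\medskip

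With weak-type $(1,1)$ in hand and the strong-type $(2,2)$ bound $B$ given, the Marcinkiewicz interpolation theorem immediately gives strong-type $(p,p)$ for $1<p<2$, and a duality argument (the transpose kernel $J(y,x)$ satisfies the same estimates, so $T^*$ is also weak-type $(1,1)$) together with interpolation between $(2,2)$ and that endpoint gives $2<p<\infty$. The only real point requiring care — and the one the paper flags — is tracking the constant: one must check that each step (the good-part estimate, the bad-part Hörmander estimate, the Marcinkiewicz constant) contributes at most a dimensional factor times $(A+B)$, and that the blow-up as $p\to 1$ or $p\to\infty$ is exactly of order $\max\{p,(p-1)^{-1}\}$, which is the standard behavior of the Marcinkiewicz constant. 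So the main obstacle is bookkeeping rather than any genuine difficulty: one needs the weak-$(1,1)$ constant to depend \emph{linearly} on $A$ (and on $B$), so that when this is applied with $A \sim M$ the smallness of $M$ is preserved.

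\medskip

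In short, I would write: "This is a standard consequence of Calderón--Zygmund theory; see \cite{Grafakos}. We recall the argument to emphasize the dependence of the constant on $A$ and $B$," then give the weak-$(1,1)$ decomposition argument above in a few lines, invoke Marcinkiewicz interpolation and duality, and note that inspecting the constants in each step yields the bound $C(n)\max\{p,(p-1)^{-1}\}(A+B)$. Applied to our kernel $J$ from \eqref{eq:...} with $A \lesssim M$ (by the Claim) and $B \lesssim M$ (by Theorem 3.2), this gives precisely the $L^p$ commutator estimate $|[R,\Phi]\omega|_{L^p} \lesssim M|\omega|_{L^p}$ claimed in the proof of Proposition 3.1.
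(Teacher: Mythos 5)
Your proposal is correct and coincides with the proof the paper implicitly relies on: the paper states this proposition as a standard result and simply cites Grafakos, and the argument in that reference is exactly your Calder\'on--Zygmund decomposition, weak-$(1,1)$ bound via the H\"ormander cancellation condition (derived from the gradient bound on $J$), Marcinkiewicz interpolation for $1<p<2$, and duality for $2<p<\infty$, with the constant tracked linearly in $A+B$ and of size $\max\{p,(p-1)^{-1}\}$. Your emphasis on keeping the dependence on $A$ (hence on $M$) linear is precisely the point the paper flags, so no gap remains.
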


Using \eqref{JEstimate1}-\eqref{JEstimate2}, Proposition 3.3, as well as Theorem \ref{Murray}, we conclude:
$$[R,\Phi]_{L^p\rightarrow L^p}\leq C(|\Phi|_{Lip},|\Phi^{-1}|_{Lip},n)\max \{p, (p-1)^{-1} \} M,$$
where $$M=\max \{|\Phi-Id|_{Lip},|\Phi^{-1}-Id|_{Lip} \}.$$

\addtocontents{toc}{\SkipTocEntry}

\subsubsection{The $W^{1,p}$ estimate}

Consider the commutator applied to some $\omega \in W^{1,p},$ $[R,\Phi]\omega.$ 
In order to prove the desired estimate, we need to estimate $\partial_{x} ([R,\Phi]\omega)$ in $L^p.$

$$\partial_{x} ([R,\Phi]\omega) = \partial_{x} R(\omega \circ \Phi)- \partial_{x} (R(\omega) \circ \Phi).$$
Note that $\partial_{x}$ and $R$ commute. 

Now we compute: $$ \partial_{x} R(\omega \circ \Phi)- \partial_{x} (R(\omega) \circ \Phi)= R(\nabla \omega \circ \Phi \cdot \partial_{x} \Phi)-R(\nabla \omega) \circ \Phi \cdot \partial_{x} \Phi$$
$$=R(\nabla \omega \circ \Phi \cdot \partial_{x} \Phi)- R(\nabla \omega \circ \Phi) \cdot \partial_{x} \Phi$$ $$ +R(\nabla \omega \circ \Phi) \cdot \partial_{x} \Phi-R(\nabla \omega) \circ \Phi \cdot \partial_{x} \Phi $$
$$=[R,\partial_{x}\Phi]_{*}\nabla\omega\circ \Phi+[R,\Phi] \nabla \omega \cdot \partial_{x} \Phi,$$ where we recall that
$$[R,A]_{*} B = R(A\cdot B)-R(A)\cdot B.$$
Thus, $$\partial_{x} ([R,\Phi]\omega)= [R,\partial_{x}\Phi]_{*}\nabla\omega\circ \Phi+\Big([R,\Phi] \nabla \omega\Big) \cdot \partial_{x} \Phi.$$
$$=[R,\partial_{x}(\Phi-I)]_{*}\nabla\omega\circ \Phi+\Big([R,\Phi] \nabla \omega\Big) \cdot \partial_{x} \Phi.$$ Note that we can subtract the identity mapping from $\Phi$ in the first term without changing anything because the identity commutes with multiplication (as opposed to the second commutator which is a commutator with composition).  

To estimate the first term, we use the Coifman-Rochberg-Weiss \cite{Grafakos} commutator estimate and to estimate the second we use the $L^p$ estimate from above. Thus,  $$|\partial_{x} ([R,\Phi]\omega)|_{L^p} \lesssim |\nabla \omega|_{L^p} ( |\Phi-I|_{Lip}+ |\Phi|_{Lip} |I-\Phi|_{Lip}).$$
This concludes the $W^{1,p}$ estimate. Now that we have the $L^p$ and the $W^{1,p}$ estimate, we can use the method of real interpolation to conclude the corresponding $B^{a}_{p,1},$ estimate for all $0<a<1$. This concludes the proof of Proposition 3.1.

\end{proof}

\addtocontents{toc}{\SkipTocEntry}

\subsection{Estimates on the flow}

Given a Lipschitz velocity field $u$ we may solve the following ordinary differential equation, to find the flow induced by $u$:
$$\dot{\Phi}(x,t)= u(\Phi(t,x),t),$$
$$\Phi(x,0)=x.$$
In the following, we will prove bounds on the size of the (time dependent) Lipschitz norm of $\Phi$ and $\Phi-Id.$ To make the notation simpler, we will write just Lip for $L^\infty([0,t]; \text{Lip})$.
Because $u$ is divergence free, $\Phi$ is measure preserving. Now, we may write $$\Phi(x,t) = x +\int_{0}^{t} u(\Phi(x,\tau),\tau)d\tau.$$
Thus, $$\Phi(\cdot,t) - I = \int_{0}^{t} u(\Phi(\cdot,\tau),\tau)d\tau.$$
First, by Gronwall's lemma\footnote{To prove Lipschitz estimates on $\Phi$ and $\Phi^{-1}$ we notice that for all $x,y\in\mathbb{R}^n$ $$|x-y|- \int_0^t |\nabla u|_{L^\infty} |\Phi(x,\tau)-\Phi(y,\tau)| d\tau\leq |\Phi(x)-\Phi(y)| \leq |x-y|+ \int_0^t |\nabla u|_{L^\infty} |\Phi(x,\tau)-\Phi(y,\tau)|d\tau.$$ Hence Gronwall's lemma gives us that $$e^{-t|\nabla u|_{L^\infty}}\leq \frac{|\Phi(x)-\Phi(y)|}{|x-y|}\leq e^{t |\nabla u|_{L^\infty}}$$ which gives the Lipschitz bound on $\Phi$ and $\Phi^{-1}$.}, $$|\Phi|_{\text{Lip}} \leq \exp (t | u|_{\text{Lip}}).$$
Consequently, due to the fact that $|u\circ \Phi|_{\text{Lip}}\leq |u|_{\text{Lip}}|\Phi|_{\text{Lip}},$ $$|\Phi-I|_{\text{Lip}} \leq t | u|_{\text{Lip}}|\Phi|_{\text{Lip}}.$$ and similarly for $\Phi^{-1} (\cdot,t).$
In particular, 
\begin{equation}\label{FlowControl} |\Phi-I|_{\text{Lip}} \leq t | u|_{\text{Lip}}\exp(t|u|_{\text{Lip}}). \end{equation}

\addtocontents{toc}{\SkipTocEntry}

\subsection{Local well-posedness in the critical besov space}

Because the velocity field is Lipschitz, we get that the transport equation is well-posed in all the Besov spaces $B^{a}_{\rho,1}$, $0<a<1, \rho<\infty.$ In particular, the transport equation is locally well-posed in the critical Besov spaces $B^{\frac{d}{p}}_{p,1}$ for all $ d<p<\infty$  (note that these spaces imbed in $L^\infty$).

In particular, we have the following proposition.

\begin{prop}
\label{LWPBesov}
Let $u \in L^\infty ( [0,1], Lip)$ and let $R$ be a Calder\'{o}n-Zygmund singular integral operator. Then (2.1)-(2.3) is well-posed in $B^{a}_{\rho,1}$ for every $a\in[0,1]$ and every $\rho\in[1,\infty]$ in the sense that if $f_0 \in B^{a}_{\rho,1}$ then there exists a unique solution $f\in C([0,1]B^{a}_{\rho,1})$ which solves (2.1)-(2.2) with initial condition $f_0.$  Moreover, \begin{equation} \label{LWPBesovBound} |f(t)|_{B^{a}_{\rho,1}} \leq |f_0|_{B^{a}_{\rho,1}} \exp(tC|\nabla u|_{L^\infty Lip}) \end{equation} for all $t\in [0,1].$ 

\end{prop}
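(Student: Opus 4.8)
The plan is to regard (2.1)--(2.3) as a \emph{linear} transport equation whose source term $R(f)$ is itself controlled in $B^a_{\rho,1}$ by $f$, so that the whole problem reduces to the classical transport theory in Besov spaces combined with Lemma~2.6 and Gr\"onwall's inequality. First I would recall from \cite{BCD} the a priori estimate for the pure transport equation $\partial_t v+u\cdot\nabla v=g$, $v(0)=v_0$, with $u$ divergence-free and $u\in L^\infty([0,1];\mathrm{Lip})$: for $a\in[0,1]$ and $\rho\in[1,\infty]$ (with the endpoint values of $a$ and $\rho$ handled using $\operatorname{div} u=0$) one has, for $t\in[0,1]$,
\[
|v(t)|_{B^a_{\rho,1}}\le |v_0|_{B^a_{\rho,1}}+\int_0^t |g(\tau)|_{B^a_{\rho,1}}\,d\tau+C\int_0^t |u(\tau)|_{\mathrm{Lip}}\,|v(\tau)|_{B^a_{\rho,1}}\,d\tau .
\]
Taking $g=R(f)$ and invoking $|R(h)|_{B^a_{\rho,1}}\le C_R\,|h|_{B^a_{\rho,1}}$ from Lemma~2.6, any solution $f$ of (2.1)--(2.3) obeys $|f(t)|_{B^a_{\rho,1}}\le |f_0|_{B^a_{\rho,1}}+C\int_0^t(1+|u(\tau)|_{\mathrm{Lip}})\,|f(\tau)|_{B^a_{\rho,1}}\,d\tau$, and Gr\"onwall's lemma then yields the asserted bound, the constant now also depending on the norm of $R$ on $B^a_{\rho,1}$. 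Uniqueness is immediate: the difference of two solutions solves (2.1)--(2.3) with zero data and zero source, so the same estimate forces it to vanish.

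For existence I would run the linear Picard iteration. Set $f^0\equiv f_0$ and, for $n\ge0$, let $f^{n+1}$ be the unique solution of $\partial_t f^{n+1}+u\cdot\nabla f^{n+1}=R(f^n)$, $f^{n+1}(0)=f_0$; this exists in $C([0,1];B^a_{\rho,1})$ by the transport theory of \cite{BCD} (strong time-continuity being available since $r=1<\infty$), because $R(f^n)\in C([0,1];B^a_{\rho,1})$ by Lemma~2.6 and induction. The a priori estimate above bounds $\sup_{[0,1]}|f^n|_{B^a_{\rho,1}}$ uniformly in $n$. Next, $g^n:=f^{n+1}-f^n$ solves the transport equation with source $R(g^{n-1})$ and zero data, so the transport estimate and Gr\"onwall give
\[
|g^n(t)|_{B^a_{\rho,1}}\le C_R\,e^{Ct(1+|u|_{L^\infty\mathrm{Lip}})}\int_0^t |g^{n-1}(\tau)|_{B^a_{\rho,1}}\,d\tau ,
\]
hence, iterating, $\sup_{[0,1]}|g^n|_{B^a_{\rho,1}}\le \frac{A^n}{n!}\,\sup_{[0,1]}|g^0|_{B^a_{\rho,1}}$ for a finite $A=A(C_R,|u|_{L^\infty\mathrm{Lip}})$. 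Thus $(f^n)$ is Cauchy in $C([0,1];B^a_{\rho,1})$, its limit $f$ stays in that space by uniform convergence, and passing to the limit in the equation shows that $f$ solves (2.1)--(2.2) with data $f_0$.

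The genuinely delicate inputs are not in the iteration itself but are imported from \cite{BCD}: the commutator estimate $\|[\Delta_q,u\cdot\nabla]v\|_{L^\rho}\lesssim c_q\,2^{-qa}\,|u|_{\mathrm{Lip}}\,|v|_{B^a_{\rho,1}}$ with $\sum_q c_q=1$, on which the transport estimate rests; its validity at the endpoints $a\in\{0,1\}$ and $\rho\in\{1,\infty\}$, where the hypothesis $\operatorname{div} u=0$ is used; and the strong time-continuity of the transport flow in the $r=1$ scale. Granting these, everything reduces to the routine Gr\"onwall-plus-linear-fixed-point scheme above, and I expect the only real work to be bookkeeping the constants so that the final bound has the advertised form.
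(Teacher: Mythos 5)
Your proof is correct and follows exactly the route the paper implicitly relies on: Proposition 3.4 is stated without proof as a consequence of standard Besov-space transport estimates for Lipschitz, divergence-free velocity fields (as in \cite{BCD}/Vishik) combined with the boundedness of $R$ on $B^a_{\rho,1}$ (Lemma 2.6), Gr\"onwall, and a linear iteration, which is precisely your scheme. One small remark: your exponential factor $\exp\bigl(Ct(1+|u|_{L^\infty \mathrm{Lip}})\bigr)$, with the extra term coming from the norm of $R$, is the correct form of the bound (the paper's displayed estimate, which shows only $|\nabla u|$ in the exponent, is slightly imprecise), and since $t\le 1$ this discrepancy is harmless for all later uses of the proposition.
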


\addtocontents{toc}{\SkipTocEntry}

\subsection{The equation along the flow}

Recall that $$f_{t} +u\cdot \nabla f =R(f).$$ Then consider the flow map $\Phi$ as in Subsection 3.2. Hence, $f$ satisfies the following equation: $$(f\circ \Phi)_{t} = R(f) \circ \Phi.$$
In particular, $$(f\circ \Phi)_{t} = R(f\circ \Phi) + [R,\Phi] f.$$
Thus, by Duhamel's principle, $$f\circ \Phi =\exp(Rt) f_{0} + \int_{0}^{t} \exp(R(t-s))[R,\Phi]f(\tau)d\tau.$$
In particular, $$|f|_{L^\infty} \geq |\exp(Rt) f_0|_{L^\infty} - C\int^{t}_{0} \Big|\exp(R(t-s))[R,\Phi]f(\tau)\Big|_{B^{\frac{1}{2}}_{2d,1}}d\tau,$$ where we have used that $ B^{\frac{1}{2}}_{2d,1} \hookrightarrow L^\infty.$
Observe that $R$ is bounded on $B^{\frac{1}{2}}_{2d,1}$ and $t\in [0,1].$ Thus,  
$$|f|_{L^\infty} \geq |\exp(Rt) f_0|_{L^\infty} - C\int^{t}_{0} \Big|[R,\Phi]f(\tau)\Big|_{B^{\frac{1}{2}}_{2d,1}}d\tau.$$
Now note that $$\exp(Rt)=  I +tR +t^2\sum_{n=2}^{\infty} \frac{t^{n-2}R^n}{n!}.$$ and thus $$|\exp(Rt)f_0 |_{L^\infty} \geq |tR(f_0)+f_{0}|_{L^\infty} -t^2 C|f|_{B^{\frac{1}{2}}_{2d,1}}.$$ 
In particular, $$|f|_{L^\infty} \geq |tR(f_0)+f_{0}|_{L^\infty} -t^2 C|f|_{B^{\frac{1}{2}}_{2d,1}}-C\int^{t}_{0} |[R,\Phi]f(\tau)|_{B^{\frac{1}{2}}_{2d,1}}d\tau. $$

We now use the commutator estimate \eqref{Commutator} as well as estimate \eqref{FlowControl}. Note that for $t$ small enough, $\Phi$ and its inverse will be arbitrarily close to the identity. In particular,  

$$|f|_{L^\infty} \geq |tR(f_0)+f_{0}|_{L^\infty} -t^2 C|f_0|_{B^{\frac{1}{2}}_{2d,1}}-Ct^2 |u|_{L^\infty \text{Lip}} \exp(t|u|_{L^\infty \text{Lip}})\sup_{\tau \in [0,1]}|f(\tau)|_{B^{\frac{1}{2}}_{2d,1}}. $$
Then we use the local well-posedness in the critical Besov space, estimate \eqref{LWPBesovBound} and we see that 
$$|f|_{L^\infty} \geq |tR(f_0)+f_{0}|_{L^\infty} -t^2 C|f_0|_{B^{\frac{1}{2}}_{2d,1}}-Ct^2 |u|_{L^\infty \text{Lip}} \exp(tC|u|_{L^\infty \text{Lip}})|f_0|_{B^{\frac{1}{2}}_{2d,1}}. $$

This concludes the proof of Theorem \ref{LowerBoundTheorem}. 

\qed

\section{General Application of the Linear Estimate}

In this section we give a very mild condition on $R$ which ensures that the linear system \ref{LinearRTP1} is strongly ill-posed in $L^\infty(\mathbb{R}^n)$ in the sense of Definition 1.1.

\begin{assumption}\label{Assumption1}
There exists a sequence of functions $g_{N} \in {B^{a}_{\rho,1}} $ such that the following holds:

\begin{equation} |g_{N}|_{L^\infty} \leq 1,\end{equation}
\begin{equation} |R(g_{N})|_{L^\infty} \geq cN,\end{equation}
\begin{equation} |R(g_{N})|_{B^{a}_{\rho,1}} \leq CN, \end{equation}

where $c$ and $C$ are constants independent of $N,$ $0<a<1,$  $ 1<\rho<\infty,$ and $a\rho=n$ ($n$ is the dimension). 

\end{assumption}

Heuristically, Assumption 1 says that there exists an $L^\infty$ function $g$ such that $R(g)$ has a logarithmic singularity. Assumption 1 can be shown to hold for many singular integral operators such as the Hilbert transform, the Riesz transforms (and compositions of Riesz transforms), and others. 

Now we can state the (linear) ill-posedness theorem. 

\begin{thm}
If $R$ satisfies Assumption 1 and if $u \in L^\infty Lip$ then (2.1)-(2.3) is strongly ill-posed on $L^\infty$. 
\end{thm}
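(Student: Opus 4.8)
The plan is to combine the quantitative lower bound from Theorem 2.7 with the sequence $g_N$ provided by Assumption 1, after a suitable rescaling, to produce initial data that is small in $L^\infty$ but whose solution becomes large in $L^\infty$ in arbitrarily short time. First I would fix $\epsilon>0$ and $t\in(0,\epsilon)$, and set $f_0 := \lambda g_N$ where $\lambda$ and $N$ are parameters to be chosen. By (4.1) we have $|f_0|_{L^\infty}\le \lambda$, so I will take $\lambda$ small (at most $\epsilon$) to land inside the $\epsilon$-ball of $X=L^\infty$. The space $Y$ in Definition 1.1 will be the critical Besov space $B^{a}_{\rho,1}$ (equivalently $B^{1/2}_{2d,1}$ after adjusting indices so that $a\rho=d$); Proposition 3.5 guarantees a unique solution $f\in C([0,1];B^a_{\rho,1})\hookrightarrow L^\infty$, so the well-posedness-in-$Y$ requirement is met, and $g_N\in B^a_{\rho,1}$ by hypothesis.

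Next I would feed $f_0=\lambda g_N$ into estimate (2.5). The main term is $|tR(f_0)+f_0|_{L^\infty} = \lambda\,|tR(g_N)+g_N|_{L^\infty} \ge \lambda\big(t\,|R(g_N)|_{L^\infty} - |g_N|_{L^\infty}\big) \ge \lambda(ctN - 1)$ by (4.2) and (4.1). The error term is $Ct^2(1+|u|_{L^\infty\mathrm{Lip}}\exp(tC|u|_{L^\infty\mathrm{Lip}}))|f_0|_{B^{1/2}_{2d,1}}$; here I must control $|f_0|_{B^a_{\rho,1}} = \lambda|g_N|_{B^a_{\rho,1}}$. The subtlety is that Assumption 1 only bounds $|R(g_N)|_{B^a_{\rho,1}}\le CN$, not $|g_N|_{B^a_{\rho,1}}$ directly — but since $R$ is a Calderón–Zygmund operator, $g_N = -R^{-1}$-type manipulations are not available in general; instead one should choose the $g_N$ in the concrete applications so that $|g_N|_{B^a_{\rho,1}}\lesssim N$ as well (this is automatic for the standard examples, e.g. truncated/rescaled bump constructions for the Hilbert and Riesz transforms), or simply add this bound to what one extracts. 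Granting $|g_N|_{B^a_{\rho,1}}\le C'N$, the error is $\le C''\lambda t^2 N$ with $C''$ depending on $u$ but not on $N$ or $t$.

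Putting the pieces together, (2.5) gives
$$|f(t)|_{L^\infty} \ge \lambda(ctN-1) - C''\lambda t^2 N = \lambda N t\big(c - C'' t\big) - \lambda.$$
Now I choose the parameters in the right order: first pick $t$ small enough (say $t<\min\{\epsilon, c/(2C'')\}$) so that $c-C''t\ge c/2$; this makes the bracket positive and gives $|f(t)|_{L^\infty}\ge \tfrac{c}{2}\lambda N t - \lambda$. Then, for this fixed $t$, choose $N$ so large that $\tfrac{c}{2}Nt$ is enormous, and finally set $\lambda = \epsilon / (\text{something}\le 1)$, or more simply $\lambda$ small enough that $|f_0|_{L^\infty}\le\epsilon$ while $\lambda N$ remains huge — concretely take $\lambda = 1/\sqrt{N}$ and $N$ large, so $|f_0|_{L^\infty}\le 1/\sqrt N\le\epsilon$ yet $|f(t)|_{L^\infty}\gtrsim \tfrac{c}{2}t\sqrt N \to\infty$, in particular exceeding $1/\epsilon$. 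This realizes strong ill-posedness in the sense of Definition 1.1.

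The main obstacle is the bookkeeping of the error term — specifically ensuring the Besov norm $|g_N|_{B^a_{\rho,1}}$ (or at least its relevant growth) is no worse than the linear rate $N$, so that it cannot overwhelm the gain $ctN$ from the singularity of $R(g_N)$ once $t$ is taken small. This is precisely why the theorem restricts to $t<\epsilon$ and exploits the $t^2$ versus $t$ discrepancy between error and main term; the choice of $g_N$ in the concrete examples (Section 4's remark lists the Hilbert transform, Riesz transforms, and their compositions) must be compatible with this, but for those operators the standard lacunary/rescaled constructions do satisfy $|g_N|_{B^a_{\rho,1}}\lesssim N$, so no genuine difficulty arises beyond careful ordering of quantifiers.
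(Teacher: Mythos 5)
Your proposal is correct and follows essentially the same route as the paper: solve the equation with data a small multiple of $g_N$, plug it into the linear lower bound of Theorem 2.1, apply (4.1)--(4.3), and then play $t$ small against $N$ large (the paper takes data $\epsilon g_N$, $t \sim c/(1+|u|_{L^\infty \text{Lip}})$ and $N \sim (1+|u|_{L^\infty \text{Lip}})/(c\epsilon^2)$, which is the same scaling game as your $\lambda = 1/\sqrt{N}$). Your side remark that Assumption 1 bounds $|R(g_N)|_{B^{a}_{\rho,1}}$ rather than $|g_N|_{B^{a}_{\rho,1}}$ is a fair reading of a small inconsistency in the paper itself: its proof in fact invokes the bound on $|g_N|_{B^{\frac{1}{2}}_{2d,1}}$, exactly as you assume, and the concrete constructions (e.g. Lemma 5.5) supply that bound.
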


\begin{proof}
The proof is a direct consequence of Theorem \ref{LowerBoundTheorem}. By Assumption 1, there exists $g_{N}$ satisfying (4.1)-(4.3). Fix $\epsilon>0.$ One can then solve (2.1)-(2.3) in $B^{a}_{\rho,1}$ with initial data $\epsilon g_{N}$ and call the solution $f_{N}(t)$. Theorem \ref{LowerBoundTheorem} now applies so that:
$$ |f_N(t)|_{L^\infty} \geq \epsilon \Big  ( t|R(g_N)|_{L^\infty}-|g_N|_{L^\infty} -Ct^2 \Big ( 1+ |u|_{L^\infty \text{Lip}} \exp(tC|u|_{L^\infty \text{Lip}})\Big)|g_N|_{B^{\frac{1}{2}}_{2d,1}}\Big ).$$
Using (4.1)-(4.3), we see:
$$|f_{N} (t)|_{L^\infty} \geq \epsilon tcN- \epsilon- \epsilon Ct^2 (1 +|u|_{L^\infty \text{Lip}} \exp(tC|u|_{L^\infty \text{Lip}}))N.$$
First we take $t = \frac{\alpha}{1+|u|_{L^\infty Lip}}$ for some small enough constant $\alpha$ so that $$|f_{N}(t)|_{L^\infty} \geq \frac{\epsilon \alpha^2 N}{1+|u|_{L^\infty Lip}}.$$
Then we take $N=\frac{1+|u|_{L^\infty Lip}}{\alpha^2\epsilon^2}.$
Hence, $|f_{N}(t)|_{L^\infty} \geq \frac{1}{\epsilon}$ even though $|f_{N} (0)|_{L^\infty} \leq \epsilon.$
This concludes the proof.

\end{proof}

In the coming sections, the proof of Theorem 4.2 will be used to show mild ill-posedness for some \emph{non-linear} equations.

\section{Perturbations of the 2D Euler equations}

An interesting open problem in mathematical fluid dynamics is to prove global well-posedness for the following type of equation:
\begin{equation} u_t + (u \cdot \nabla) u + \nabla p = Au \end{equation}
\begin{equation} div(u)=0, \end{equation}
where $A$ is some constant matrix. It is possible to prove global well-posedness in only one case: when $Au = \lambda u + \gamma u^\perp,$ for constants $\lambda$ and $\gamma$ (of course $A$ can be taken to depend on $x$ in a similar fashion). Indeed, in this case $\text{curl} (Au)=\lambda\omega$ and thus $\omega$ satisfies a maximum principle which leads to global well-posedness using the standard technique. 

When $A$ is not of the above form, we will use Theorem 2.1 to prove a mild ill-posedness result for (5.1)-(5.2) (in other words, to show that $\omega$ \emph{does not} satisfy a maximum principle). The reason that we will not be able to prove the strong ill-posedness is that the non-linear term starts to play a prominent role once the vorticity grows. 
 As a special case, we consider the following system:
 \begin{equation} u_t + (u \cdot \nabla) u + \nabla p = \left( \begin{array}{cc}
-u_1 \\
0 \\
\end{array} \right), \end{equation}
\begin{equation} \text{div}(u)=0. \end{equation}
Notice that the right-hand side of this equation is a drag term--it causes the energy of the system to decrease. A simple computation shows that $|\omega|_{L^2}$ is also decreasing. Therefore, on the level of kinetic energy, we should expect this system to behave "better" than 2-D Euler. It turns out that this drag term destroys the conventional global well-posedness proof for 2-D Euler as well as the Yudovich theory. 
Upon passing to the equation for the vorticity we get:
\begin{equation} \omega_t +u\cdot \nabla \omega=-{u_1}_y, \end{equation}
\begin{equation} u=\nabla^{\perp}(-\Delta)^{-1}\omega. \end{equation}
In particular, using (5.6), $$-{u_{1}}_y =R_{2}^2 \omega,$$ where $R_{2}$ is the Riesz transform with symbol $\frac{-i\xi_{2}}{|\xi|}.$

Using Theorem \ref{LowerBoundTheorem}, we prove the following non-linear ill-posedness result for this system.

\begin{thm} \label{RTP1}
(5.5)-(5.6) is mildly ill-posed in $L^\infty.$ In other words, 
there exists a sequence of functions $\omega_{0}^{\epsilon}$ belonging to $H^s$ for every $s>0$ and universal constants $C_{i},$ independent of $\epsilon,$ with the following properties:
\\
$(1) |\omega_{0}^\epsilon|_{L^\infty} \leq \epsilon,$ 
\\
$(2) |\omega_{0}^\epsilon|_{B^{\frac{1}{2}}_{4,1}} \leq C_{1},$
\\
$(3)$ If $\omega^\epsilon(t)$ is the (local) solution of (5.5)-(5.6) in $L^{\infty}([0,C_{2}];B^{\frac{1}{2}}_{4,1})$ with $\omega^\epsilon (0) = \omega_{0}^\epsilon,$
then there exists some $t\in (0,\epsilon]$ so that
$$|\omega(t)^\epsilon|_{L^\infty} \geq C_{3}.$$
\end{thm}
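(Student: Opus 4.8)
The plan is to bootstrap the nonlinear ill-posedness of (5.5)--(5.6) from the linear result, Theorem 2.1, by treating the transport velocity $u=\nabla^\perp(-\Delta)^{-1}\omega$ as a \emph{given} Lipschitz field and $R=R_2^2$ as the singular integral operator. First I would record that for $\omega_0 \in B^{1/2}_{4,1}(\mathbb{R}^2)$ (so $d=2$, $a=1/2$, $\rho=4$, $a\rho=d$) the system (5.5)--(5.6) is locally well-posed on some time interval $[0,C_2]$ with $C_2$ depending only on a uniform bound for $|\omega_0|_{B^{1/2}_{4,1}}$; this follows from the standard theory of the 2D Euler-type equations with a Riesz-transform forcing, since $B^{1/2}_{4,1}\hookrightarrow L^\infty$ controls $\nabla u$ in $L^\infty$ up to a log, and in fact the cleanest route is that $B^{1/2}_{4,1}$ is a Banach algebra embedding into $\mathrm{Lip}$ after one derivative, so $\omega\in B^{1/2}_{4,1}$ gives $u\in L^\infty\mathrm{Lip}$ with the Lipschitz bound controlled by $|\omega|_{B^{1/2}_{4,1}}$. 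The a priori estimate $|\omega(t)|_{B^{1/2}_{4,1}}\le |\omega_0|_{B^{1/2}_{4,1}}\exp(Ct\sup_{[0,t]}|\omega|_{B^{1/2}_{4,1}})$ closes on a time interval whose length depends only on $C_1$, giving $C_2$.

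Next I would choose the initial data. By Assumption 1 applied to $R=R_2^2$ (which, as the paper notes, satisfies it — $R_2^2g_N$ can be made to have a logarithmic singularity while $|g_N|_{L^\infty}\le 1$ and $|R_2^2 g_N|_{B^{1/2}_{4,1}}\le CN$), pick $g_N$ with $|g_N|_{L^\infty}\le 1$, $|R_2^2 g_N|_{L^\infty}\ge cN$, $|g_N|_{B^{1/2}_{4,1}}\lesssim N$, and additionally $g_N\in H^s$ for all $s$ (one takes a fixed Schwartz-class building block rescaled dyadically finitely many times, so this is automatic). Set $\omega_0^\epsilon := \epsilon\, g_N$ with $N$ to be fixed. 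Then $|\omega_0^\epsilon|_{L^\infty}\le\epsilon$ gives $(1)$; but here is the one genuine subtlety: condition $(2)$ demands $|\omega_0^\epsilon|_{B^{1/2}_{4,1}}\le C_1$ with $C_1$ \emph{independent of $\epsilon$}, so I cannot simply let $N$ be huge. The resolution, exactly as in the proof of Theorem 4.2 but with the roles rebalanced: since $|\omega_0^\epsilon|_{B^{1/2}_{4,1}}\le \epsilon C N$, I must pick $N$ so that $\epsilon N \lesssim 1$, i.e. $N \sim 1/\epsilon$; then $|\omega_0^\epsilon|_{B^{1/2}_{4,1}}\lesssim 1 =: C_1$ and the local existence time $C_2$ is uniform in $\epsilon$.

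Then I would run Theorem 2.1 with $f_0=\omega_0^\epsilon$, $u=u^\epsilon(t)$ the actual (Lipschitz, divergence-free) velocity of the solution — here I use that, \emph{along the life of the solution}, $|u^\epsilon|_{L^\infty\mathrm{Lip}}\le C|\omega^\epsilon|_{L^\infty B^{1/2}_{4,1}}\le C C_1$ is bounded uniformly in $\epsilon$ on $[0,C_2]$, so the $u$-dependent factors in (2.5) are $O(1)$. Theorem 2.1 then yields, for $t\in[0,C_2]$,
$$|\omega^\epsilon(t)|_{L^\infty}\ \ge\ t\,|R_2^2\omega_0^\epsilon|_{L^\infty} - |\omega_0^\epsilon|_{L^\infty} - C t^2\big(1+|u^\epsilon|_{L^\infty\mathrm{Lip}}e^{tC|u^\epsilon|_{L^\infty\mathrm{Lip}}}\big)|\omega_0^\epsilon|_{B^{1/2}_{4,1}}.$$
Plugging in $|R_2^2\omega_0^\epsilon|_{L^\infty}=\epsilon|R_2^2 g_N|_{L^\infty}\ge \epsilon c N\gtrsim c$ (since $\epsilon N\sim 1$), $|\omega_0^\epsilon|_{L^\infty}\le\epsilon$, and $|\omega_0^\epsilon|_{B^{1/2}_{4,1}}\le C_1$, we get $|\omega^\epsilon(t)|_{L^\infty}\ge c' t - \epsilon - C'' t^2$. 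Fixing $t$ to be a small absolute constant (and $\le C_2$, $\le\epsilon$ is arranged since we may further shrink — wait, we need $t\le\epsilon$; instead one fixes $t = t_0$ a small absolute constant, and if $t_0>\epsilon$ one can still get a lower bound at $t=\epsilon$ by noting the right side is increasing near $0$, but cleaner: take $t=\epsilon$ directly, then $|\omega^\epsilon(\epsilon)|_{L^\infty}\ge c'\epsilon\cdot N_{\mathrm{eff}}$ — no. The honest fix, matching the statement's "some $t\in(0,\epsilon]$": one does \emph{not} fix $t$ absolutely; instead one observes the lower bound $c' t-\epsilon-C''t^2$ is maximized at $t\sim c'/(2C'')$ giving value $\sim c'^2/C''$ once $\epsilon$ is small, but to land inside $(0,\epsilon]$ one instead re-runs the argument choosing $N$ \emph{larger}, $N\sim 1/\epsilon^2$, so that $\epsilon t N\sim t/\epsilon$ beats the error at $t=\epsilon$; but then $C_1$ is not uniform. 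The actual mechanism, and the step I expect to be the real obstacle, is reconciling these: one keeps $N\sim 1/\epsilon$ so $(2)$ holds, and shows the $L^\infty$ norm is already $\ge C_3$ at the \emph{fixed} small time $t_0$, then notes that by continuity and the differential inequality the norm cannot have been small throughout $(0,t_0]$ — more precisely, if $t_0>\epsilon$ one uses that $|\omega^\epsilon(\cdot)|_{L^\infty}$ growing from $\le\epsilon$ to $\ge C_3$ forces it to exceed, say, $C_3/2$ at some time $\le\epsilon$ only if $t_0\le\epsilon$; so in fact one simply takes $\epsilon$ small enough that the relevant absolute $t_0\le\epsilon$ fails — hence the statement should be read as: for the constant $C_2$, and $t\in(0,\epsilon]$ meaning one rescales. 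The genuinely clean argument is to keep $t$ free: set $t=\epsilon$ and choose $N=N(\epsilon)=K/\epsilon^2$; then $|\omega_0^\epsilon|_{B^{1/2}_{4,1}}\le CK/\epsilon$ which violates $(2)$ — so this does not work either, confirming that the intended reading is $N\sim1/\epsilon$, $t_0$ absolute, and the phrase "$t\in(0,\epsilon]$" is a typo for "$t\in(0,C_2]$" or else $\epsilon$ is taken $\ge t_0$, i.e. large-ish; I would follow the route $N\sim 1/\epsilon$, $t_0$ a small universal constant $\le C_2$, yielding $|\omega^\epsilon(t_0)|_{L^\infty}\ge C_3$ with $C_3$ universal.) The main obstacle, then, is this bookkeeping of the three competing scales ($\epsilon$, $N$, $t$) so that simultaneously $(1)$, $(2)$, the local existence time, and the lower bound all hold with constants independent of $\epsilon$ — together with verifying that $R_2^2$ satisfies Assumption 1 on $\mathbb{R}^2$, which amounts to exhibiting one explicit $L^\infty$ function with a logarithmically unbounded $R_2^2$-image, e.g. built from $\sum_{k=1}^{N} 2^{-k\cdot 0}\,\psi(2^k x)$-type lacunary sums adapted to the symbol $(-i\xi_2/|\xi|)^2$.
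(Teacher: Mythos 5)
Your proposal is correct and follows essentially the same route as the paper: apply the linear Theorem 2.1 with the solution's own velocity field, take data $\epsilon g_N$ with $\epsilon N\sim 1$ so that the $B^{\frac{1}{2}}_{4,1}$ norm (and hence $|u|_{L^\infty \mathrm{Lip}}$, via the local well-posedness of Lemma 5.4) stays uniformly bounded, and conclude growth of size $c\,\epsilon N t$ at a fixed small time; the paper's own remark after the theorem (``we may take $t<\delta$ independent of $\epsilon$'') resolves the $t\in(0,\epsilon]$ bookkeeping exactly as you ultimately do. The one ingredient you leave unconstructed --- an explicit $L^\infty$ function whose image under $R_2^2$ has a logarithmic singularity, i.e.\ that $R_2^2$ satisfies Assumption 1 --- is precisely the paper's Lemma 5.5, proved there with a frequency-truncated Bahouri--Chemin-type datum (the odd-odd extension of the characteristic function of $[0,1]^2$) rather than a lacunary sum.
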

\begin{rema} Note that we may also take $t<\delta$ independent of $\epsilon.$ \end{rema}
\begin{rema} We conjecture that the system is actually strongly ill-posed. However, because of the fact that the Lipschitz norm of $u$ may become unbounded, it is unclear whether the non-linear term could make the solution bounded in $L^\infty$ as is the case in the example of Section 2.2.\end{rema}

\begin{rema} Note here that our result holds for more general equations of the following type

$$\omega_{t} + u\cdot \nabla \omega = R \omega $$

$$u = (-\Delta)^{-\alpha} \nabla^\perp \omega,$$ 

where $\alpha \geq 1$ and $R$ is a linear operator mapping $B^{a}_{\rho,1}$ to itself and for which there exists some $\omega_{0} \in L^{\infty}$ such that $R\omega_{0}$ has a logarithmic singularity. Therefore any Calderon-Zygmund operator which is unbounded on $L^\infty$ would work. Due to the extra regularity on the velocity field, it is easy to show that for $\alpha>1,$ the system is actually \emph{strongly} ill-posed in $L^\infty$.    
\end{rema}
We have a few obvious corollaries. 

\begin{cor}

The zero solution of (5.5)-(5.6) is (non-linearly) unstable with respect to $L^{\infty}$ perturbations. 

\end{cor}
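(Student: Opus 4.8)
The plan is to read off this corollary directly from Theorem 5.2. Recall that $\omega\equiv 0$ (equivalently $u\equiv 0$) is a stationary solution of (5.5)--(5.6). Nonlinear $L^\infty$-instability of this equilibrium means precisely the following: there is a threshold $\eta_0>0$, independent of everything, such that for every $\delta>0$ one can produce initial vorticity $\omega_0$ with $|\omega_0|_{L^\infty}<\delta$ whose (unique, local) solution attains $|\omega(t)|_{L^\infty}\ge\eta_0$ at some time $t$ inside its interval of existence. So the only task is to exhibit, for each $\delta$, such a ``bad'' datum, and Theorem 5.2 hands it to us.

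Concretely, given $\delta>0$, I would apply Theorem 5.2 with $\epsilon<\delta$: it furnishes $\omega_0^\epsilon\in\bigcap_{s>0}H^s$ with $|\omega_0^\epsilon|_{L^\infty}\le\epsilon<\delta$ and $|\omega_0^\epsilon|_{B^{1/2}_{4,1}}\le C_1$, together with the solution $\omega^\epsilon\in L^\infty([0,C_2];B^{1/2}_{4,1})$ of (5.5)--(5.6) and a time $t^\epsilon\in(0,\epsilon]$ with $|\omega^\epsilon(t^\epsilon)|_{L^\infty}\ge C_3$. Since $C_2,C_3$ are universal, set $\eta_0:=C_3$ (or $C_3/2$ if one wants a strict inequality and a little slack), and note that for $\epsilon$ small we have $t^\epsilon\le\epsilon<C_2$, so the time at which the norm jumps genuinely lies in the interval of existence provided by the local well-posedness in $B^{1/2}_{4,1}$. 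This is exactly the instability statement; moreover, because $t^\epsilon\le\epsilon\to 0$, the departure from equilibrium happens on arbitrarily short time scales, which is a strengthening one may as well record. It is also worth pointing out that the solution in question is the \emph{unique} one in $B^{1/2}_{4,1}\hookrightarrow L^\infty$ and is in fact smooth (being in every $H^s$), so the instability is genuine sensitive dependence of a classical solution rather than an artifact of nonexistence or nonuniqueness.

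I do not anticipate a real obstacle here: the corollary is a packaging of Theorem 5.2. The only points requiring a word of care are (i) phrasing instability on the \emph{fixed} time interval $[0,C_2]$ rather than for all time, since only local existence is available — handled by $t^\epsilon\le\epsilon<C_2$; and (ii) keeping the statement at the level of the vorticity $\omega$, which is the natural variable of (5.5)--(5.6); if one wishes, the Biot--Savart law (5.6) transfers the conclusion to $u$ in an appropriate norm, but the cleanest and sharpest formulation is the $L^\infty$ instability of $\omega$ stated above.
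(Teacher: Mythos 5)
Your proposal is correct and is exactly how the paper intends the corollary to be read: it is an immediate repackaging of the mild ill-posedness theorem of Section 5 (which you cite as Theorem 5.2, the paper's Theorem 5.1), taking $\eta_0=C_3$ and the data $\omega_0^\epsilon$ with $\epsilon<\delta$ and $t^\epsilon\in(0,\epsilon]$ inside the uniform existence interval. The paper gives no separate proof, calling the corollary obvious, so your spelled-out version matches its argument.
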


\begin{cor}
The map $J_{t}$ taking an initial data $\omega_{0}$ to the solution at time $\omega (t)$is discontinuous in the $L^{\infty}$ norm.
\end{cor}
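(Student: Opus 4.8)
The plan is to run the scheme of Theorem 4.2 for the \emph{linear} problem, but with the velocity field frozen to be the one generated by the nonlinear solution, and to keep the ensuing error terms under control by first establishing a uniform-in-$\epsilon$ lifespan in the scaling-critical space $B^{1/2}_{4,1}(\mathbb{R}^2)$. First I would record the structural identity $-{u_1}_y=R_2^2\omega$ noted after (5.6): since $R_2^2=R_2\circ R_2$ is a composition of Riesz transforms it is a Calder\'on--Zygmund operator to which Assumption 1 applies, with $d=2$, $a=\tfrac12$, $\rho=4$ (so $a\rho=d$ and $B^{a}_{\rho,1}=B^{1/2}_{4,1}\hookrightarrow L^\infty$). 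This furnishes a sequence $g_N$, which we may take Schwartz (mollifying the constructed profile does not spoil (4.1)--(4.3)), with $|g_N|_{L^\infty}\le 1$, $|R_2^2 g_N|_{L^\infty}\ge cN$, $|R_2^2 g_N|_{B^{1/2}_{4,1}}\le CN$ and $|g_N|_{B^{1/2}_{4,1}}\le CN$. I then set $\omega_0^\epsilon:=\epsilon\,g_{N(\epsilon)}$ with $N(\epsilon):=\lfloor (A\epsilon)^{-1}\rfloor$ for a constant $A$ to be fixed. Then $|\omega_0^\epsilon|_{L^\infty}\le\epsilon$ (giving property (1)), $|\omega_0^\epsilon|_{B^{1/2}_{4,1}}\le C\epsilon N(\epsilon)\le C/A=:C_1$ (giving property (2)), and $\omega_0^\epsilon\in H^s$ for every $s$. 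The decisive point is that with this choice $|R_2^2\omega_0^\epsilon|_{L^\infty}\ge c\epsilon N(\epsilon)\approx c/A$ is of \emph{unit} size, comparable to the error budget $C_1$: this is precisely the balance that produces mild, but not strong, ill-posedness.

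Next I would establish local well-posedness of (5.5)--(5.6) in $B^{1/2}_{4,1}$ with a lifespan bounded below \emph{uniformly in $\epsilon$}. Since $u=\nabla^\perp(-\Delta)^{-1}\omega$, the matrix $\nabla u$ is a matrix of Riesz-type operators applied to $\omega$, so $|\nabla u|_{L^\infty}\lesssim|\nabla u|_{B^{1/2}_{4,1}}\lesssim|\omega|_{B^{1/2}_{4,1}}$, while the forcing $R_2^2\omega$ is bounded on $B^{1/2}_{4,1}$ by Lemma 2.6. Feeding these into the transport estimate of Proposition 3.7 and closing with Gronwall yields a time $T_*\gtrsim 1/|\omega_0^\epsilon|_{B^{1/2}_{4,1}}\ge C_2>0$, uniform in $\epsilon$, on which $\sup_{[0,T_*]}|\omega^\epsilon(t)|_{B^{1/2}_{4,1}}\le 2C_1$; in particular $\sup_{[0,T_*]}|u^\epsilon(t)|_{L^\infty\mathrm{Lip}}\le K$ for a constant $K$ depending only on $C_1$. (Delort/Yudovich-type theory gives global existence in $L^2\cap L^\infty\cap H^s$, but that is not enough here, since $|\omega_0^\epsilon|_{H^s}$ is not uniform in $\epsilon$ — only the critical bound is.)

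Finally, on $[0,T_*]$ the function $\omega^\epsilon$ solves the \emph{linear} equation $f_t+u^\epsilon\cdot\nabla f=R_2^2 f$ with $f(0)=\omega_0^\epsilon$ and $u^\epsilon$ a given divergence-free field with $|u^\epsilon|_{L^\infty\mathrm{Lip}}\le K$, so Theorem 2.1 applies and gives
\[
|\omega^\epsilon(t)|_{L^\infty}\ \ge\ |tR_2^2\omega_0^\epsilon+\omega_0^\epsilon|_{L^\infty}-Ct^2\bigl(1+Ke^{tCK}\bigr)|\omega_0^\epsilon|_{B^{1/2}_{4,1}}
\]
for all $t\le T_*$. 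Using $|tR_2^2\omega_0^\epsilon+\omega_0^\epsilon|_{L^\infty}\ge t|R_2^2\omega_0^\epsilon|_{L^\infty}-|\omega_0^\epsilon|_{L^\infty}\ge c\,t\,\epsilon N(\epsilon)-\epsilon$ and $|\omega_0^\epsilon|_{B^{1/2}_{4,1}}\le C_1$, together with $\epsilon N(\epsilon)\approx 1/A$, the right side is $\ge \tfrac{c}{A}t-\epsilon-C'(1+Ke^{tCK})t^2$: the main term is linear in $t$ and the error is $O(t^2)$. Choosing $t=t_*:=\min\{T_*,\delta\}$ with $\delta$ a fixed small constant so that $C'(1+Ke^{\delta CK})\delta\le\tfrac{c}{2A}$ forces $|\omega^\epsilon(t_*)|_{L^\infty}\ge\tfrac{c}{2A}t_*-\epsilon\ge\tfrac{c}{4A}t_*=:C_3>0$ for $\epsilon$ small. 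As the remark after the theorem indicates, this $t_*$ is a constant independent of $\epsilon$ (which is consistent with mild ill-posedness), and, as $\epsilon\to 0$, $\omega_0^\epsilon\to 0$ in $L^\infty$ yet $\omega^\epsilon(t_*)\not\to 0$, giving the stated discontinuity and Corollaries 5.2--5.3.

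The main obstacle is the coupling in the middle step: one must guarantee that the \emph{nonlinear} solution stays bounded in $B^{1/2}_{4,1}$ for a time that does not shrink with $\epsilon$, since the error terms in Theorem 2.1 scale like $|u^\epsilon|_{\mathrm{Lip}}\lesssim|\omega^\epsilon|_{B^{1/2}_{4,1}}$ and the available window $t_*$ must be uniform. The same estimate explains why one cannot reach \emph{strong} ill-posedness: increasing the main term $t|R_2^2\omega_0^\epsilon|_{L^\infty}$ beyond a fixed size would require $\epsilon N(\epsilon)\to\infty$, which destroys the uniform bound on $|\omega_0^\epsilon|_{B^{1/2}_{4,1}}$, hence on the nonlinear solution, so the quadratic error overtakes the linear gain before any large amplitude is attained.
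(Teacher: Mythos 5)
Your proposal is correct and follows essentially the same route as the paper: you reprove the mild ill-posedness of Theorem 5.1 exactly as the paper does (data $\epsilon g_N$ from Lemma 5.5 with $\epsilon N\sim\mathrm{const}$, uniform local well-posedness in the critical space $B^{1/2}_{4,1}$ giving $|u^\epsilon|_{L^\infty\mathrm{Lip}}\lesssim\epsilon N$, then the linear lower bound of Theorem 2.1 along the nonlinear flow at a fixed small time $t_*$), and the discontinuity of $J_{t_*}$ at $\omega_0=0$ is then immediate since $\omega_0^\epsilon\to 0$ in $L^\infty$ while $|\omega^\epsilon(t_*)|_{L^\infty}\ge C_3$. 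The only blemishes are cosmetic, e.g.\ the citation of a nonexistent ``Proposition 3.7'' where you mean the transport well-posedness estimate of Section 3 (equivalently Lemma 5.4).
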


To prove Theorem 5.1 we first need to prove that $R_{2}^2$ satisfies Assumption 1.

\addtocontents{toc}{\SkipTocEntry}

\subsection{Proof of Theorem 5.1}

The proof of theorem 5.1 is based upon the linear Theorem \ref{LowerBoundTheorem}. Indeed, suppose that $R:=R_{2}^2$ satisfies Assumption 1. Note that, following a result of Vishik \cite{V2}, one can prove local well-posedness of (5.3)-(5.5) in all spaces $B^{a}_{\rho,1}$ with $a\rho= 1$ (in fact this is a consequence of Proposition \ref{LWPBesov}). Indeed, the following is standard:

\begin{lemm}\label{LWP}

Let $R$ be a Calder\'on-Zygmund operator. Consider the following equation in the plane:
\begin{equation} \omega_{t} + u \cdot\nabla \omega = R\omega, \end{equation}
\begin{equation} u= (-\Delta)^{-1} \nabla^{\perp} \omega. \end{equation}
Then, this system is locally well-posed in the Besov space $B^{a}_{\rho,1}$ and the following estimate holds for all $t$ with $Ct|\omega_0|_{B_{\rho,1}^a}<\frac{1}{2}$:
\begin{equation} \label{LWPRTPBesov} |\omega(t)|_{B^{a}_{\rho,1}} \leq C\frac{|\omega_{0}|_{B^{a}_{\rho,1}}}{1-Ct|\omega_{0}|_{B^{a}_{\rho,1}}}. \end{equation} 
\end{lemm}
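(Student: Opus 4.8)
The plan is to run the classical approximation plus a-priori-estimate scheme for transport equations in Besov spaces (this is Vishik's argument, here in the comfortable regime where the velocity is genuinely Lipschitz); the one feature special to this system is that both the drift $u$ and the forcing $R\omega$ are controlled \emph{linearly} by $\omega$ itself, which is exactly what turns the usual exponential bound into the Riccati-type bound claimed. First I would take $\omega_0$ Schwartz, so that a smooth solution exists on a maximal interval, and propagate the $B^a_{\rho,1}$ norm along it using the transport estimate of Proposition 3.5 in differential form (valid since $a\in(0,1)$):
$$\frac{d}{dt}|\omega|_{B^a_{\rho,1}}\le C|\nabla u|_{L^\infty}\,|\omega|_{B^a_{\rho,1}}+|R\omega|_{B^a_{\rho,1}}.$$
Since $R$ is bounded on $B^a_{\rho,1}$ (Lemma 2.6) we have $|R\omega|_{B^a_{\rho,1}}\lesssim|\omega|_{B^a_{\rho,1}}$; and since $\nabla u=\nabla\nabla^\perp(-\Delta)^{-1}\omega$ is a zeroth-order Calder\'on--Zygmund operator applied to $\omega$, Lemma 2.6 together with the embeddings $B^a_{\rho,1}\hookrightarrow B^0_{\infty,1}\hookrightarrow L^\infty$ (Corollaries 2.4 and 2.5, valid since $a\rho\ge d$) gives $|\nabla u|_{L^\infty}\lesssim|\omega|_{B^a_{\rho,1}}$. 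Writing $X(t)=|\omega(t)|_{B^a_{\rho,1}}$ this is $X'\le CX^2+CX$; absorbing the linear term by setting $Z=e^{-Ct}X$ so that $Z'\le C'Z^2$ on $[0,1]$, and integrating, yields $X(t)\le C\,X(0)/(1-CtX(0))$, which is the stated bound and shows the smooth solution persists as long as $CtX(0)<1$.

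For general $\omega_0\in B^a_{\rho,1}$ I would mollify, $\omega_0^\varepsilon\to\omega_0$ in $B^a_{\rho,1}$; by Step 1 the smooth solutions $\omega^\varepsilon$ are bounded in $C([0,T];B^a_{\rho,1})$ uniformly in $\varepsilon$ on a common interval $[0,T]$ with $T\sim(C|\omega_0|_{B^a_{\rho,1}})^{-1}$, and each induces a velocity uniformly bounded in $L^\infty([0,T];\mathrm{Lip})$. For the differences I would perform the standard energy estimate on the \emph{velocity} difference in $L^2$: $u^\varepsilon-u^{\varepsilon'}$ solves a transport equation with drift $u^\varepsilon$ and lower-order terms $(u^\varepsilon-u^{\varepsilon'})\cdot\nabla u^{\varepsilon'}$ and $R(u^\varepsilon-u^{\varepsilon'})$ (using that $R$ commutes with $\nabla^\perp(-\Delta)^{-1}$), both controlled in $L^2$ by $|u^\varepsilon-u^{\varepsilon'}|_{L^2}$ times the uniform bounds, so Gronwall shows $(u^\varepsilon)$, hence $(\omega^\varepsilon)$ in a weaker norm, is Cauchy. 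Interpolating this weak-norm convergence against the uniform $B^a_{\rho,1}$ bound gives convergence in $C([0,T];B^{a'}_{\rho,1})$ for every $a'<a$, enough to pass to the limit in the equation; the limit lies in $C([0,T];B^a_{\rho,1})$ by weak-$*$ compactness plus recovery of time-continuity from the equation, and it inherits the Riccati bound by lower semicontinuity. The same difference estimate applied to two solutions with identical data gives uniqueness.

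The step I expect to be most delicate is the low regularity of the velocity in the second part: since $a<1$, $u$ is merely Lipschitz with no room to spare, so $B^a_{\rho,1}$ is not transported continuously and one is forced to obtain compactness and uniqueness in a strictly weaker norm and only then recover the top regularity by interpolation and weak continuity --- the familiar but fussy endpoint behaviour of transport well-posedness at the Lipschitz threshold. Closing the first part is, by contrast, essentially mechanical once one records that $|\nabla u|_{L^\infty}$ and $|R\omega|_{B^a_{\rho,1}}$ are both bounded by $|\omega|_{B^a_{\rho,1}}$.
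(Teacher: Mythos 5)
The paper offers no proof of this lemma at all --- it is declared standard, with a pointer to Vishik and to the linear transport estimate of Proposition 3.4 --- and your argument is precisely the standard one being invoked: the Besov transport estimate together with boundedness of Calder\'on--Zygmund operators and the embedding $B^{a}_{\rho,1}\hookrightarrow L^\infty$ (for $a\rho\ge d$) closes a Riccati-type differential inequality for $|\omega(t)|_{B^{a}_{\rho,1}}$, and a mollification/compactness step handles rough data; this is correct. The only caveat worth recording is that your absorption of the linear forcing term $|R\omega|_{B^{a}_{\rho,1}}$ via $Z=e^{-Ct}X$ makes the constant in the claimed bound uniform only on a bounded time interval (say $t\le 1$), which is exactly the small-time regime in which the paper applies the lemma.
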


By Lemma \ref{Assumption1_R_2} below, there exists $g_{N}$ satisfying:
$$ |g_{N}|_{B^{\frac{1}{2}}_{4,1}} \leq CN,$$ 
$$ |g_{N}|_{L^\infty} \leq 1,$$
$$ |Rg_{N}|_{L^{\infty}} \geq cN.$$

Now solve system (5.5)-(5.6) with initial data $\epsilon g_{N}.$ Note that in what follows, $\epsilon tN$ will always be smaller than some fixed constant $c.$ This will ensure that we have existence on a uniform time interval. 

Call the solution $f_{N}(t) $  and its corresponding velocity field $u_N(t).$ Following the ideas from Section 4, we see that using the linear estimate \eqref{LowerBound}, we get
$$ |f_N(t)|_{L^\infty} \geq \epsilon (t|R(g_N)|_{L^\infty}-|g_N|_{L^\infty} -Ct^2( 1+ |u_N|_{L^\infty \text{Lip}} \exp(tC|u_N|_{L^\infty \text{Lip}}))|f_N|_{L^\infty B^{\frac{1}{2}}_{2d,1}}).$$
We then see, due to the local well-posedness of (5.5)-(5.6), that
$$|f_{N} (t)|_{L^\infty} \geq \epsilon tcN- \epsilon- \epsilon Ct^2 (1 +|u_N|_{L^\infty \text{Lip}} \exp(tC|u_N|_{L^\infty \text{Lip}}))N.$$
Using that $|\nabla u_N|_{L^\infty} \lesssim |\omega|_{B^{\frac{1}{2}}_{4,1}} \lesssim \epsilon N,  $ we get
$$|f_{N} (t)|_{L^\infty} \geq \epsilon tcN- \epsilon- \epsilon Ct^2 (1 +\epsilon N \exp(tC\epsilon N))N.$$
Now if we take $\epsilon N t $ small (but independent of $\epsilon$), we see that for $\epsilon$ and $t$ small enough 
$$|f_{N}(t)|\geq c\epsilon Nt- C\epsilon^2 N^2 t^2.$$
Upon taking $\epsilon N t$ smaller yet (on the order of $\frac{c}{2C}$) we see that $$|f_{N}(t)|\geq\alpha,$$ for some absolute constant $\alpha.$

We are done once we note that $|f(0)|_{L^\infty}\leq \epsilon$. 

\qed

\begin{rema}The reason that we are unable to prove the strong ill-posedness for (5.7)-(5.8) is that once the vorticity becomes large, the commutator estimate we have becomes uncontrollable. If there were a way to control the non-linear term by something less than $\Phi$ in the Lipschitz class (say, if one were able to do with only a $C^\alpha$ bound on $\Phi$), then the strong ill-posedness would be within reach. This is a challenge. 
\end{rema}
\addtocontents{toc}{\SkipTocEntry}

\subsection{Proof that $R_{2}^2$ satsisfies Assumption 1}

\begin{lemm}
\label{Assumption1_R_2}
$R:=R_{2}^2$ satisfies Assumption 1.
\end{lemm}

\vspace{2mm} 

\noindent\emph{Proof of the Lemma:}

\vspace{3mm}

By a rotation, it suffices to prove that $R_1R_2$ satisfies Assumption 1 since, under rotation by $\frac{\pi}{2}$, $R_2^2$ becomes $2R_1R_2-Id.$ 

To show that $R_1R_2$ satisfies Assumption 1, we define $f^{N}$ on the fourier side by $\widehat{f_{N}}= \chi_{[-2^N,2^N]^2} \hat{f}$, where $f(x,y)=\chi_{[-1,1]^2} \text{sgn}(x)\text{sgn}(y).$ Note that this is a regularization of the stationary solution of the Euler equations used in the work of Bahouri and Chemin \cite{BC94}. Then, clearly $f_{N}$ belongs to $H^{s}$ for all $s$.  

First note that $$\hat{f} (\xi_{1},\xi_{2})= 4 \frac{\sin(\xi_{1})\sin(\xi_{2})}{\xi_{1}\xi_{2}}.$$
\vspace{2mm}
\emph{Proof that $f_{N}$ satisfies condition (1):}
\vspace{2mm}

Note that $f$ belongs to $B^{\frac{1}{2}}_{4,\infty}.$ Indeed, for $|\xi|$ large, $\widehat{(-\Delta)^{\frac{1}{2}}f}(\xi)$ is a smooth function multiplied by $\frac{1}{|\xi|^{3/2}}$. Showing that $f$ belongs to $B^{\frac{1}{2}}_{4,\infty}$ is then an exercise (see for example Proposition 2.21 of \cite{BCD}).   

Then, $$|\chi_{B_{2^N}} f|_{B^{\frac{1}{2}}_{4,1}} \leq \sum_{1}^{CN}|\Delta_{j}f|_{B^{\frac{1}{2}}_{4,\infty}}+|S_0f|_{L^4}.$$ 
  
This implies condition (1).

\vspace{2mm}

\emph{Proof that $f_{N}$ satisfies condition (2):}

\vspace{2mm}

By the Fourier inversion formula, we have that 
$$|f_N|_{L^\infty} \leq \sup_{x_{1},x_{2}} \int_{-2^N}^{2^N}\int_{-2^N}^{2^N} \frac{\sin(\xi_{1})\sin(\xi_{2})}{\xi_{1}\xi_{2}}\cos(x_1\xi_{1})\cos(x_2\xi_{2})d\xi_{1}d\xi_{2}  $$
To prove condition (2) it suffices to show that the following quantity is bounded:
$$\sup_{x} \int_{-2^N}^{2^N} \frac{\sin(\xi)}{\xi}\cos(x\xi)d\xi=\sup_{x} \int_{-2^N}^{2^N} \frac{\sin(\xi+x\xi)-\sin(\xi-x\xi)}{2\xi}d\xi.$$
$$=\sup_x \int_{-2^N(x+1)}^{2^N(x+1)} \frac{\sin(\xi)}{2\xi}d\xi+ \int_{-2^N(-x+1)}^{2^N(-x+1)} \frac{\sin(\xi)}{2\xi}d\xi.$$
which is bounded by a universal constant since the following quantity is known to be bounded:

$$\sup_{a,b} |\int_{a}^{b} \frac{\sin(\xi)}{\xi}d\xi|<C.$$

\emph{Proof that $f_{N}$ satisfies condition (3):}
Using the Fourier inversion formula, we see that 

$$R_1R_2f_N(x,y)= \int_{[-2^N,2^N]^2} \frac{\sin(\xi_1)\sin(\xi_{2})}{\xi_{1}^2 +\xi_{2}^2}\sin(x\xi_{1})\sin(y\xi_{2})d\xi_{1} d\xi_{2}.$$
$$R_1R_2f_N(1,1)=\int_{[-2^N,2^N]^2} \frac{\sin^2 (\xi_1)\sin^2 (\xi_{2})}{\xi_{1}^2 +\xi_{2}^2}d\xi_{1}d\xi_{2}$$ and condition (3) follows. 
\qed

\section{The 3d Euler equations}\label{3dEuler}

Consider the three dimensional vorticity equation:
\begin{equation} \omega_{t} +u\cdot \nabla \omega = \nabla u \, \omega. \end{equation}
It is not clear at first that the 3d Euler equations can be cast in the framework of the linear problem (2.1)-(2.3). As above, through the Biot-Savart law, one can view $\nabla u$ as $R(\omega)$ where $R$ is now a matrix of singular integral operators.So the 3d Euler equations can be seen as:
\begin{equation} \omega_{t} +u\cdot \nabla \omega =R(\omega)\omega. \end{equation} 
The quadratic nature of $R(\omega)\omega$ is such that we cannot directly apply the analysis of (2.1)-(2.3). However, one can consider perturbing a shear flow in order to pull a linear $R(\omega)$ out of the right hand side. Indeed, let $\omega ={\bf e_{3}}+\tilde{\omega}.$

Then we see that $$\tilde{\omega}_t +u\cdot\nabla \tilde\omega=R(\tilde\omega)\tilde\omega+R(\bf{e_3})\tilde\omega+R(\tilde \omega)\bf{e_{3}}.  $$
Note that we may regard $R(\tilde\omega)\tilde\omega$ as a quadratic term so that it is of order $\epsilon^2$ if we follow the proof of Theorem 5.1. We can deduce the following theorem:

\begin{thm} There exists a sequence of functions $\omega_{0}^{\epsilon}$ belonging to $H^s$ for every $s>0$ and universal constants $C_{i},$ $1\leq i\leq 3$, independent of $\epsilon,$ with the following properties:
\\
$(1) |\omega_{0}^\epsilon-{\bf{e_3}}|_{L^\infty} \leq \epsilon$ 
\\
$(2) |\omega_{0}^\epsilon-{\bf{e_3}}|_{B^{\frac{3}{4}}_{4,1}} \leq C_{1}$
\\
$(3)$ If $\omega^\epsilon(t)$ is the (local) solution of the 3D Euler equations with $\omega^{\epsilon}-\bf{e}_3$ in $L^{\infty}([0,C_{2}];B^{\frac{3}{4}}_{4,1})$ with $\omega^\epsilon (0) = \omega_{0}^\epsilon,$
then there exists some $t\in (0,\epsilon]$ so that
$$|\omega(t)^\epsilon-{\bf{e_3}}|_{L^\infty} \geq C_{3}.$$

\end{thm}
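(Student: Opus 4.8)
The plan is to mimic the structure of the proof of Theorem 5.1, treating the 3D vorticity equation in the perturbed form $\tilde\omega_t + u\cdot\nabla\tilde\omega = R(\mathbf{e_3})\tilde\omega + R(\tilde\omega)\mathbf{e_3} + R(\tilde\omega)\tilde\omega$, where $\tilde\omega = \omega - \mathbf{e_3}$ and $R$ is the matrix of Calder\'on--Zygmund operators coming from $\nabla u = R(\omega)$ via Biot--Savart. The two linear-in-$\tilde\omega$ terms on the right are of the form "singular integral operator applied to $\tilde\omega$" (after contracting against the fixed vector $\mathbf{e_3}$), so they fit the template of the forcing $R(\cdot)$ in the linear Theorem 2.1; the genuinely quadratic term $R(\tilde\omega)\tilde\omega$ will be shown to be $O(\epsilon^2)$ and hence negligible on the relevant time scale, exactly as the nonlinear transport term was handled in Section 5.

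First I would establish local well-posedness of the 3D Euler equations in vorticity form in the space $B^{1/2}_{4,1}$ (with the reference profile $\mathbf{e_3}$ subtracted), with the bound $|\tilde\omega(t)|_{B^{1/2}_{4,1}} \lesssim |\tilde\omega_0|_{B^{1/2}_{4,1}}$ as long as $Ct|\tilde\omega_0|_{B^{1/2}_{4,1}}$ stays below a fixed threshold; this is the analogue of Lemma 5.4 and follows from the boundedness of $R$ on Besov spaces (Lemma 2.6) together with the standard transport estimate (Proposition 3.4) and a fixed-point argument. Then I would choose the initial data. The natural candidate is to take a component of $\tilde\omega_0^\epsilon$ to be $\epsilon g_N$ where $g_N$ is a divergence-free vorticity field (so $\operatorname{div}\tilde\omega_0 = 0$ is respected) built from the Bahouri--Chemin-type profile used in Lemma 5.5, chosen so that the operator picking out the relevant entry of $R(\tilde\omega)\mathbf{e_3} + R(\mathbf{e_3})\tilde\omega$ — which is a fixed nonzero Calder\'on--Zygmund operator, analogous to $R_2^2$ in the 2D case — produces a logarithmic $L^\infty$ singularity: $|g_N|_{L^\infty}\le 1$, $|g_N|_{B^{1/2}_{4,1}} \le CN$, and $|(\text{that operator})g_N|_{L^\infty} \ge cN$. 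One must check that the relevant matrix entry of $\omega \mapsto R(\omega)\mathbf{e_3} + R(\mathbf{e_3})\omega$ is indeed a non-degenerate singular integral operator that is unbounded on $L^\infty$; this is the "$R$ satisfies Assumption 1" step transplanted to 3D, and it should go through because the Biot--Savart kernel in 3D gives Riesz-transform-type operators.

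With these ingredients, I would run Theorem 2.1 on the linearized equation: writing $\tilde\omega^\epsilon = \exp(Rt)\tilde\omega_0^\epsilon + (\text{Duhamel terms})$ along the flow, the commutator estimate (3.1) plus the flow bound (3.3) control the non-commuting part by $Ct^2(1 + |u^\epsilon|_{L^\infty\mathrm{Lip}}\exp(tC|u^\epsilon|_{L^\infty\mathrm{Lip}}))|\tilde\omega_0^\epsilon|_{B^{1/2}_{4,1}}$, while the quadratic term $R(\tilde\omega)\tilde\omega$ contributes at most $O(\epsilon^2 N^2 t)$ using the Besov well-posedness bound and $B^{1/2}_{4,1}\hookrightarrow L^\infty$. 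Since $|\nabla u^\epsilon|_{L^\infty}\lesssim |\tilde\omega^\epsilon|_{B^{1/2}_{4,1}}\lesssim \epsilon N$, choosing $\epsilon N t$ a small fixed constant gives $|\tilde\omega^\epsilon(t)|_{L^\infty} \ge c\epsilon N t - C(\epsilon N t)^2 \ge \tfrac{c}{2}\epsilon N t$, a universal constant, proving the claimed norm inflation with $t \le \epsilon$ achievable by taking $N$ large.

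The main obstacle I expect is the bookkeeping around the quadratic term $R(\tilde\omega)\tilde\omega$: unlike the scalar 2D case, here $R$ is matrix-valued and one must verify that the nonlinearity truly only enters at order $\epsilon^2$ after the shear-flow renormalization, and in particular that subtracting the constant vector $\mathbf{e_3}$ (which is not in $L^2$ on $\mathbb{R}^3$, hence the need to work on the torus or on a bounded domain as in the statement) does not spoil either the Biot--Savart representation or the Besov-space well-posedness. A secondary technical point is ensuring the constructed $g_N$ is compatible with the divergence-free constraint on vorticity and that the specific matrix entry hit by $R(\cdot)\mathbf{e_3}+R(\mathbf{e_3})(\cdot)$ is the one for which the logarithmic singularity is produced; this may require a slightly different choice of profile than in Lemma 5.5, but the stretching term $\nabla u\,\omega$ in 3D generically contains such an entry, so the construction should be robust.
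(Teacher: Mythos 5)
Your proposal is correct and follows essentially the same route as the paper: perturb around the constant vorticity ${\bf e_3}$ so that $\tilde\omega_t+u\cdot\nabla\tilde\omega=R({\bf e_3})\tilde\omega+R(\tilde\omega){\bf e_3}+R(\tilde\omega)\tilde\omega$, treat the genuinely quadratic term as an $O(\epsilon^2)$ error, and rerun the Theorem 5.1/Theorem 2.1 machinery (local well-posedness in $B^{1/2}_{4,1}$, commutator and flow estimates, an Assumption-1-type data construction for the relevant singular-integral entry). The paper itself only sketches this, and the localization issue for ${\bf e_3}\notin L^2$ that you flag is exactly its Remark 2 (multiply ${\bf e_3}$ by a smooth cutoff).
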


\begin{rema} The proof of Theorem 6.1 follows the same ideas we used in the proof of Theorem 5.1 except that the critical space in two dimensions is $B^{\frac{1}{2}}_{4,1}$ while the critical space in three dimensions is $B^{\frac{3}{4}}_{4,1}$. \end{rema}

\begin{rema} One might be concerned by the fact that $\bf{e_3}$ is not of finite energy in the whole space; however, the result is very easily localized by considering ${\bf e_3}$ multiplied by a smooth cut-off function. 
\end{rema}

\section{The Euler equations with $C^{1}$ data}  

As another bi-product of Proposition 3.1, the incompressible Euler equations are \emph{strongly} ill-posed for $u\in C^1 \cap L^2.$ Indeed, consider the incompressible Euler equations in velocity form:
\begin{equation} \partial_t u + (u\cdot \nabla) u +\nabla p =0, \end{equation}
\begin{equation} \text{div}(u)=0. \end{equation}
Notice that the equation for the gradient of $u$ is:
\begin{equation}\partial_t\nabla u + (u\cdot \nabla) \nabla u +D^2 p+ \nabla u:\nabla u =0. \end{equation}
The pressure is recovered from $u$ by the following equation:
\begin{equation} \Delta p =\text{div} ((u\cdot \nabla) u)= \sum_{l\not=k} u_{l,k}u_{k,l}, \end{equation} with $u_{j,i} =\partial_{x_i}u_{j}.$
Then notice that $D^2 p= R_{i}R_{j}(\sum_{l\not=k} u_{l,k}u_{k,l}).$ Therefore, (7.3) becomes:
$$\partial_t\nabla u + (u\cdot \nabla) \nabla u +R_{i}R_{j}(\sum_{l\not=k} u_{l,k}u_{k,l})+ \nabla u:\nabla u =0. $$
We will write this as:

\begin{equation} \partial_t\nabla u + (u\cdot \nabla) \nabla u +R(B(\nabla u, \nabla u))+ Q(\nabla u, \nabla u) =0, \end{equation}

where $R:=(R_{i}R_{j})_{i,j}$ is a matrix of singular integral operators, $B(\nabla u, \nabla u):= \sum_{l\not=k} u_{l,k}u_{k,l},$ and $Q(\nabla u,\nabla u)=\nabla u:\nabla u.$

We have the following theorem:

\begin{thm} For every $\epsilon>0,\delta>0$ small enough there exists $u_{0} \in C^\infty(\mathbb{R}^2),$ of compact support, with $$|u_{0}|_{C^1\cap L^2} \leq \epsilon,$$ such that if we denote by $u(t)$, the solution of the incompressible Euler equations in $\mathbb{R}^2$ with initial data $u_0,$ then
$$\sup_{0<t<\delta} |u(t)|_{C^1\cap L^2} \geq \frac{1}{\epsilon}.$$
\end{thm}

In section 8 we will prove a stronger result: 

\begin{thm}
For every $\epsilon>0,\delta>0$ small enough there exists $u_{0} \in C^1\cap L^2(\mathbb{R}^2),$ with $$|u_{0}|_{C^1\cap L^2} \leq \epsilon,$$ such that if we denote by $u(t)$ the unique (Yudovich) solution of the incompressible Euler equations in $\mathbb{R}^2$ with initial data $u_0,$ then
$$\sup_{0<t<\delta} |u(t)|_{C^1\cap L^2}=+\infty.$$
\end{thm}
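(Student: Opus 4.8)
The plan is to upgrade Theorem 7.1 from the statement ``the $C^1\cap L^2$ norm of the solution becomes $\geq 1/\epsilon$ before time $\delta$'' to the statement ``there is $C^1\cap L^2$ data whose solution leaves $C^1\cap L^2$ altogether''. The mechanism is the standard one alluded to in the introduction: we already have a \emph{quantitative} norm-inflation estimate for smooth (Schwartz) data, with the inflation constant depending only on the $C^1\cap L^2$ size of the data and not on the higher norms; one then superposes a sequence of such bump solutions, living at widely separated scales or locations, and applies a triangle-inequality/finite-speed-of-propagation argument to conclude that the full solution's $C^1$ norm dominates each piece, hence is unbounded.

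\medskip

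First I would extract from the proof of Theorem 7.1 a \emph{localized} building block: for each $k$ there is $v_{0}^{(k)}\in\mathbb{S}(\mathbb{R}^d)$, supported (essentially) in a ball $B_k$ of radius $r_k$ around a point $x_k$, with $|v_0^{(k)}|_{C^1\cap L^2}\leq 2^{-k}$, such that the Euler solution $v^{(k)}(t)$ with this data satisfies $|v^{(k)}(t_k)|_{C^1}\geq 2^{k}$ for some time $t_k<\delta 2^{-k}$ — this is just Theorem 7.1 applied with $\epsilon=2^{-k}$, together with the remark that the building blocks of Lemma 5.5 (the truncated odd-odd characteristic function of a square) can be scaled and translated, and that $C^1$ inflation is a local phenomenon. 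Next I would set $u_0 := \sum_{k\geq 1} v_0^{(k)}$ with the $B_k$ chosen pairwise disjoint and with $\mathrm{dist}(B_k, B_j)$ large compared to $r_k$; then $u_0\in C^1\cap L^2$ (the sum converges in that norm by $\sum 2^{-k}<\infty$), and in fact $u_0$ can be arranged to have $|u_0|_{C^1\cap L^2}\leq\epsilon$ by an overall rescaling.

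\medskip

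The heart of the argument is then a \emph{decoupling} step: I must show that for $t$ in a small interval the solution $u(t)$ with data $u_0$ agrees with $v^{(k)}(t)$ on a neighborhood of the (transported) support of $v_0^{(k)}$, up to an error that is negligible in $C^1$ on that neighborhood. Euler does not have finite speed of propagation, so this is the main obstacle: the pressure couples all scales together through a nonlocal (Riesz-type) operator. The way I would handle it is to argue that because the bumps are separated by distances much larger than their sizes, the velocity field generated by the other bumps $v_0^{(j)}$, $j\neq k$, and by the coupling pressure term, is \emph{smooth and small in $C^2$} on the ball $B_k$ on the relevant time scale — the far-field of a Calderón–Zygmund kernel acting on a compactly supported source is $C^\infty$ with size controlled by (source mass)/(distance)$^{d}$. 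Feeding this into a Gronwall/stability estimate for the $C^1\cap L^2$ norm of the difference $u - v^{(k)}$ (exactly the kind of local well-posedness estimate used implicitly throughout Section 7, now run on a short time interval), one gets $|u(t_k) - v^{(k)}(t_k)|_{C^1(B_k')}\leq C_k$ with $C_k$ bounded independently of $k$ (or even $\to 0$ if the separations are chosen growing fast enough).

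\medskip

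Finally I would conclude: choose the sequence of times $t_k\to 0$; by the decoupling estimate, $|u(t_k)|_{C^1}\geq |v^{(k)}(t_k)|_{C^1} - C_k \geq 2^k - C_k \to\infty$. Since each $t_k<\delta$, this gives $\sup_{0<t<\delta}|u(t)|_{C^1\cap L^2}=+\infty$, which is Theorem 7.2. One technical point to be careful about: one must ensure the local solution exists on $[0,t_k]$ for each $k$ — but $t_k<\delta 2^{-k}$ is shrinking, and the data near $B_k$ has bounded smooth norm at its own scale, so the relevant local existence time (for the piece that matters) does not degenerate; alternatively one works with the solution in a high Sobolev space which exists on a fixed short interval because $u_0$, though only $C^1\cap L^2$ globally, can be taken in $H^s$ locally near each $B_k$, or one simply takes $u_0$ itself to be smooth with unbounded $H^s$ norm and runs the whole argument for $t<\delta$ where $\delta$ is smaller than the (possibly short) existence time — then sends a diagonal subsequence argument to produce genuinely $C^1\cap L^2$ (but not better) data. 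I expect the pressure-decoupling estimate to be the one place where real work is needed; everything else is bookkeeping with the already-established Theorem 7.1.
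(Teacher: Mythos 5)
Your construction hinges on the ``decoupling'' step, and that step is where the argument breaks down rather than merely where ``real work is needed''. To conclude $|u(t_k)|_{C^1}\geq |v^{(k)}(t_k)|_{C^1}-C_k$ you need the difference $u-v^{(k)}$ to be small \emph{in $C^1$} near $B_k$, i.e.\ a $C^1$ stability estimate for the Euler equations — which is precisely the kind of continuous dependence that the theorem (and the whole paper) denies. Concretely, the difference $w=u-v^{(k)}$ solves a transport-type system driven by $\nabla v^{(k)}$ and by the pressure coupling, and any Gronwall bound on $\|w\|_{C^1}$ requires control of second derivatives (or at least $C^{1,\alpha}$ norms) of $v^{(k)}$ and of the pressure difference; by construction these are exactly the quantities that inflate to size $\geq 2^k$ on $[0,t_k]$. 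The resulting constant $C_k$ carries factors like $\exp\bigl(\int_0^{t_k}\|\nabla v^{(k)}\|_{L^\infty}\,d\tau\bigr)$ and worse, and cannot be made bounded in $k$ by increasing the separation of the bumps: separation only makes the \emph{initial} interaction small, after which the error is amplified by the same badly behaved local dynamics you are trying to exhibit. There are secondary unresolved points as well (the class in which the glued datum $u_0=\sum_k v_0^{(k)}$ has a unique solution, and the fact that $\sup_{t<\delta}|u(t)|_{C^1}\geq 2^k-C_k$ for each $k$ needs the solution to exist up to $t_k$ in a class where the comparison makes sense), but the circularity of the $C^1$ stability step is the essential gap.

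The paper avoids gluing altogether. It takes a \emph{single} datum $u_0$ (the $Q\log$ construction of Lemma 7.5, so that $\omega_0\in L^1\cap L^\infty$ gives a unique Yudovich solution in 2D) for which $|D^2p_0|_{L^p}=|B(\nabla u_0,\nabla u_0)|_{L^p}\geq cp$ for all $p$, and argues by contradiction: if $|\nabla u(t)|_{L^\infty}\leq M$ on $[0,c]$, then running the Duhamel-along-the-flow argument in $L^p$, with the commutator constant from Proposition 3.3 tracked as $c_p\approx p$, yields $|\nabla u(t)|_{L^p}\geq cpt-C(M)pt^2$ uniformly in $p$; for fixed small $t$ this is unbounded in $p$, contradicting the assumed Lipschitz bound. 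So the quantitative $L^p$ growth (uniform in $p$, reflecting the logarithmic singularity) replaces your superposition scheme, and no decoupling or $C^1$ stability input is ever needed. If you want to salvage a gluing strategy, you would have to measure the error in a norm in which Euler \emph{is} stable (e.g.\ $L^2$ of the velocity difference) and then find a way to pass from $L^2$-closeness to a lower bound on the $C^1$ norm of $u$, which your current write-up does not provide.
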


\begin{rema}The growth in the $C^1$ case will come from the singular integral which arises in the pressure term.  However, we will have to be careful because the pressure term is not linear in $u,$ but bilinear. \end{rema}
\begin{rema}The construction in Theorem 7.1 is completely local. Therefore, the result holds on a bounded domain as well as on the torus. \end{rema}
\begin{rema}With the exception of choosing the right initial data, the proof of theorem 7.1 is quite soft--so it likely can be used in several other contexts. \end{rema}
\begin{rema}After the completion of this work we came to know that Misiolek and Yoneda \cite{MisY} have proven ill-posedness for the Euler equations in $C^1$ in the sense that the solution map could not be continuous. Their result is not about norm inflation but about discontinuity of the solution map. Their method relies upon a clever adaptation of the work of Bourgain and Li \cite{BLi}; it does not seem that there is any apparent relation between our work and theirs.\end{rema} 

\addtocontents{toc}{\SkipTocEntry}

\subsection{A toy model}\label{ToyModel}

To understand the effect of the pressure term, $R(B(\nabla u, \nabla u)),$ we may consider the following toy model: $$\partial_t f = R(f^2).$$ We want to see that this model is ill-posed on $L^\infty.$  In the case of $f_t = R(f)$ we are able to solve this equation on the Fourier-side by a series expansion in order to deduce that $$|f|_{L^\infty} \geq |f_0 +t R(f_0)|_{L^\infty} -{t^2} C|f_0|_{B^{\frac{1}{2}}_{4,1}}.$$ However, in the case where we have $f_t = R(f^2),$ it is not clear how to solve the equation using any sort of similar expansion. 

\begin{prop}
Let $B$ be a quadratic form acting on matrices. Consider the following matrix PDE:
\begin{equation} \label{QRTP1}f_{t}=R(B(f,f)),\end{equation}
\begin{equation} \label{QRTP2}f(0,x)=f_0 (x),\end{equation}  
where $R$ is a Calder\'on-Zygmund singular integral operator. Then, \eqref{QRTP1}-\eqref{QRTP2} is locally well-posed on $B^{a}_{\rho,1}$ for all $a\rho\geq n.$ Moreover, for $t$ small, smooth solutions satisfy the following bounds:
\begin{equation} |f(t)|_{L^\infty} \geq |f_0 +t R(B(f_0,f_0))|_{L^\infty} - t^2 C( \sup_{0\leq \tau\leq t}|f(\tau)|_{L^\infty}) |f_0|_{B^{\frac{1}{2}}_{2d,1}}. \end{equation}
\end{prop}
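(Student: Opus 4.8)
The statement has two halves: local well-posedness of \eqref{7.6}--\eqref{7.7} in the critical Besov spaces $B^a_{\rho,1}$ with $a\rho\ge d$, and the lower bound \eqref{7.8} for smooth solutions. For the well-posedness half I would run a standard Picard iteration in $C([0,T];B^a_{\rho,1})$: the map $f\mapsto f_0+\int_0^t R(B(f,f))\,ds$ is a contraction on a small ball for $T$ small, because $B^a_{\rho,1}$ with $a\rho\ge d$ is an algebra (it embeds in $L^\infty$ by Corollary 2.5, and the product estimate $|B(f,g)|_{B^a_{\rho,1}}\lesssim |f|_{B^a_{\rho,1}}|g|_{B^a_{\rho,1}}$ holds via paraproduct decomposition), and $R$ is bounded on $B^a_{\rho,1}$ by Lemma 2.6. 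Uniqueness and persistence of regularity (hence the solution stays smooth if $f_0$ is smooth) follow in the usual way. This part is routine and I would state it briefly.

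\textbf{The lower bound.} This is the heart of the matter, and the idea is exactly the one used for the linear toy model $f_t=R(f)$: expand in time and control the remainder in a subcritical-looking but $L^\infty$-embedding Besov norm. Write, by Duhamel (here there is no transport term, so no flow map is needed — a genuine simplification over Theorem 2.1),
\[
f(t)=f_0+\int_0^t R\big(B(f(s),f(s))\big)\,ds.
\]
Then split $B(f(s),f(s))=B(f_0,f_0)+\big(B(f(s),f(s))-B(f_0,f_0)\big)$, so that
\[
f(t)=f_0+tR\big(B(f_0,f_0)\big)+\int_0^t R\Big(B(f(s),f(s))-B(f_0,f_0)\Big)\,ds.
\]
Taking $L^\infty$ norms and using $B^{\frac12}_{2d,1}\hookrightarrow L^\infty$ together with boundedness of $R$ on $B^{\frac12}_{2d,1}$,
\[
|f(t)|_{L^\infty}\ge |f_0+tR(B(f_0,f_0))|_{L^\infty}-C\int_0^t \big|B(f(s),f(s))-B(f_0,f_0)\big|_{B^{\frac12}_{2d,1}}\,ds.
\]
The remaining task is to bound $|B(f(s),f(s))-B(f_0,f_0)|_{B^{\frac12}_{2d,1}}$ by (something like) $Cs\,(\sup_{0\le\tau\le s}|f(\tau)|_{L^\infty})\,|f_0|_{B^{\frac12}_{2d,1}}^{?}$ so that the integral produces a $t^2$ factor and the claimed shape of \eqref{7.8}. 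Bilinearity gives $B(f,f)-B(f_0,f_0)=B(f-f_0,f)+B(f_0,f-f_0)$, and $f(s)-f_0=\int_0^s R(B(f,f))\,d\tau$, which in $B^{\frac12}_{2d,1}$ is $O(s)$ times the (bounded, by the well-posedness step) $\sup_\tau |f(\tau)|_{B^{\frac12}_{2d,1}}^2$. The slightly delicate point is reconciling this with the precise right-hand side of \eqref{7.8}, which is linear in $|f_0|_{B^{\frac12}_{2d,1}}$ and carries only $\sup|f(\tau)|_{L^\infty}$ on the other factors; I would use the algebra/product estimate in the asymmetric form $|B(g,h)|_{B^{\frac12}_{2d,1}}\lesssim |g|_{L^\infty}|h|_{B^{\frac12}_{2d,1}}+|g|_{B^{\frac12}_{2d,1}}|h|_{L^\infty}$ (again from paraproducts), and then absorb the extra Besov factors coming from $f-f_0$ into constants via the local well-posedness bound $\sup_{[0,t]}|f(\tau)|_{B^{\frac12}_{2d,1}}\le 2|f_0|_{B^{\frac12}_{2d,1}}$ valid on a short time interval — so that, after the $s$-integration, one genuinely gets $t^2 C(\sup_{[0,t]}|f(\tau)|_{L^\infty})|f_0|_{B^{\frac12}_{2d,1}}$ as claimed, the constant $C$ depending on $R$, $B$, $d$ and on $|f_0|_{B^{\frac12}_{2d,1}}$ through the well-posedness time.

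\textbf{Main obstacle.} The only real subtlety is the bookkeeping in the last step: making sure the remainder estimate genuinely has the asymmetric structure stated — $t^2$, one factor of $|f_0|_{B^{\frac12}_{2d,1}}$, and otherwise only $L^\infty$ norms of $f$ — rather than the cruder $t^2\,C\,(\sup|f|_{B^{\frac12}_{2d,1}})^2$ one gets naively. This is handled by using the bilinear estimate in its $L^\infty\times B^{\frac12}_{2d,1}$ form and by invoking the short-time a priori bound from the well-posedness part to convert all surplus critical-norm factors into a single constant. Everything else is a direct transcription of the argument behind Theorem 2.1, minus the commutator (there is no transport term here), so I expect this proposition to be considerably softer than the linear theorem.
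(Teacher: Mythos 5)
Your proposal is correct and follows essentially the same route as the paper: the paper also writes $f_t=R(B(f_0,f_0))+\bigl(R(B(f,f))-R(B(f_0,f_0))\bigr)$, uses the asymmetric product estimate $|fg|_{B^a_{\rho,1}}\leq |f|_{B^a_{\rho,1}}|g|_{L^\infty}+|f|_{L^\infty}|g|_{B^a_{\rho,1}}$ together with boundedness of $R$ on $B^{\frac{1}{2}}_{2d,1}$ and the embedding into $L^\infty$, and absorbs the surplus critical-norm factors via the short-time well-posedness bound to get the $t^2\,C(\sup_\tau|f(\tau)|_{L^\infty})\,|f_0|_{B^{\frac{1}{2}}_{2d,1}}$ remainder. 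Your treatment of the asymmetric bookkeeping is in fact slightly more explicit than the paper's, but it is the same argument.
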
 

\begin{rema} Bound (7.8) only holds so long as $f$ exists. However, note that if the initial data $f_0$ belongs to $B^{a}_{\rho,1}$ with $a\rho\geq 1,$ then finite-time blow up in  \eqref{QRTP1}-\eqref{QRTP2} can \emph{only} happen if $|f|_{L^\infty}$ blows up. This will be important in what follows. \end{rema}

\begin{proof}

The local well-posedness is standard. Indeed, all that is needed is that $R$ is bounded on $B^{a}_{\rho,1}$ and that these spaces are algebras containing $L^\infty.$ Indeed, recall the following inequality:
$$|fg|_{B^a _{\rho,1}} \leq |f|_{B^a _{\rho,1}}|g|_{L^\infty}+|f|_{L^\infty}|g|_{B^a _{\rho,1}}.$$
First we write: 
\begin{equation} f_{t}=R(B(f_0,f_0))+\big ( R(B(f,f))-R(B(f_0,f_0)) \big ),\end{equation}
Next, note that $|f_t|_{L^\infty_{t,x}} \leq C|B(f,f)|_{L^\infty_{t}B^{\frac{1}{2}}_{2d,1}} \leq C(|f|_{L^\infty_{t,x}})|f_0|_{B^{\frac{1}{2}}_{2d,1}},$ by local well-posedness. 
Consequently, 
$$ |B(f,f)-B(f_0,f_0)|_{{B^{\frac{1}{2}}_{2d,1}}} \leq C(|f|_{L^\infty_{t,x}}) |f-f_0|_{{B^{\frac{1}{2}}_{2d,1}}} \leq tC(|f|_{L^\infty_{t,x}})|f_0|_{{B^{\frac{1}{2}}_{2d,1}}}.  $$ 
Hence, so long as the solution $f(t)$ exists, 
\begin{equation} |f(t)|_{L^\infty} \geq |f_0 +t R(B(f_0,f_0))|_{L^\infty} - t^2 C( \sup_{0\leq \tau\leq t}|f(\tau)|_{L^\infty}) |f_0|_{B^{\frac{1}{2}}_{2d,1}}. \end{equation}
\end{proof}

\begin{cor}
Let $B$ be a quadratic form acting on matrices. Consider the following matrix PDE:
\begin{equation} \label{QRTP3}f_{t}=R(B(f,f))+g,\end{equation}
\begin{equation} \label{QRTP4}f(0,x)=f_0 (x)\end{equation}  
where $R$ is a Calder\'on-Zygmund singular integral operator and $g$ is a given function belonging to $L^\infty_{t}B^{a}_{\rho,1},$ with $a\rho\geq 1$. Then, \eqref{QRTP3}-\eqref{QRTP4} is locally well-posed on $B^{a}_{\rho,1}$ for all $a\rho\geq n.$ Moreover, for $t$ small, smooth solutions satisfy the following bounds:
\begin{equation} |f(t)|_{L^\infty} \geq |f_0 +t R(B(f_0,f_0))|_{L^\infty} -t|g|_{L^\infty}- t^2 C( \sup_{0\leq \tau\leq t}|f(\tau)|_{L^\infty}) |f_0|_{B^{\frac{1}{2}}_{2d,1}} \end{equation}
\end{cor}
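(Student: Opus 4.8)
The plan is to mimic the proof of Proposition 7.2 almost verbatim, treating the extra term $g$ as a harmless perturbation. First I would establish local well-posedness of (7.11)-(7.12) on $B^a_{\rho,1}$ with $a\rho\geq d$: since $g \in B^a_{\rho,1}$ is a fixed source term, the Duhamel formulation $f(t) = f_0 + tg + \int_0^t R(B(f,f))(\tau)\,d\tau$ is contractive on a small time interval, exactly as before, using that $R$ is bounded on $B^a_{\rho,1}$ (Lemma 2.6), that these spaces are algebras via the inequality $|fg|_{B^a_{\rho,1}} \leq |f|_{B^a_{\rho,1}}|g|_{L^\infty} + |f|_{L^\infty}|g|_{B^a_{\rho,1}}$, and that $B^a_{\rho,1}\hookrightarrow L^\infty$ when $a\rho = d$ (Corollary 2.5).

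Next I would derive the lower bound (7.14). Following the decomposition used for (7.9), write
\begin{equation}
f_t = R(B(f_0,f_0)) + g + \big(R(B(f,f)) - R(B(f_0,f_0))\big).
\end{equation}
From local well-posedness one has $|f_t|_{L^\infty_{t,x}} \leq C(|f|_{L^\infty_{t,x}})|f_0|_{B^{\frac12}_{2d,1}} + C|g|_{B^{\frac12}_{2d,1}}$, hence $|f(\tau)-f_0|_{B^{\frac12}_{2d,1}} \lesssim \tau\big(C(|f|_{L^\infty_{t,x}})|f_0|_{B^{\frac12}_{2d,1}} + |g|_{B^{\frac12}_{2d,1}}\big)$, and therefore, using bilinearity of $B$ and the algebra inequality,
\begin{equation}
|B(f,f) - B(f_0,f_0)|_{B^{\frac12}_{2d,1}} \leq C(|f|_{L^\infty_{t,x}})\,|f-f_0|_{B^{\frac12}_{2d,1}} \leq \tau\, C(|f|_{L^\infty_{t,x}})\,|f_0|_{B^{\frac12}_{2d,1}}.
\end{equation}
Integrating the Duhamel formula from $0$ to $t$, isolating $f_0 + tR(B(f_0,f_0))$, bounding $\big|\int_0^t g\,d\tau\big|_{L^\infty} \leq t|g|_{L^\infty}$ directly (this is the only new term, and it contributes linearly, not quadratically), and applying the commutator-free estimate on the remaining integral term via $B^{\frac12}_{2d,1}\hookrightarrow L^\infty$, one obtains exactly (7.14). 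Since the bound only involves $|g|_{L^\infty}$ and $|f_0|_{B^{\frac12}_{2d,1}}$ together with $\sup_{\tau\leq t}|f(\tau)|_{L^\infty}$, this matches the stated inequality.

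There is no real obstacle here: the corollary is a routine extension of Proposition 7.2, and the only point requiring a word of care is that $g$ must be assumed in $B^a_{\rho,1}$ (not merely $L^\infty$) so that the fixed-point argument still closes in the Besov space; this is why the hypothesis $g\in B^a_{\rho,1}$ with $a\rho\geq 1$ appears. The term $tg$ appearing inside the $L^\infty$ norm of the main term could in principle be kept there, but since we only want a clean lower bound we pull it out with the triangle inequality at the cost of the harmless $-t|g|_{L^\infty}$ contribution, which is exactly what (7.14) records.
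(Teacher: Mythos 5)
Your proposal is correct and follows exactly the route the paper intends: Corollary 7.4 is stated without a separate proof precisely because it is the proof of Proposition 7.2 run verbatim with the source term carried along in the Duhamel formula, the new contribution being pulled out by the triangle inequality as the $-t|g|_{L^\infty}$ term, and the difference $R(B(f,f))-R(B(f_0,f_0))$ handled through the algebra property of $B^{\frac{1}{2}}_{2d,1}$ and its embedding into $L^\infty$ just as you do. The only (minor) blemish, which your write-up shares with the paper's own statement, is that strictly the $t^2$ remainder should also pick up $|g|_{B^{\frac{1}{2}}_{2d,1}}$ through $|f-f_0|_{B^{\frac{1}{2}}_{2d,1}}$; your intermediate display records this term and then silently drops it, exactly as (7.14) does.
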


We are now in a position to prove Theorem 7.1. 
\\

\emph{Proof of Theorem 7.1.}
\\

Call $f:=\nabla u$ and recall that $\text{div}(u)=0.$ Towards a contradiction, suppose that for all $f$ with $|f_0|<\epsilon,$ $\sup_{0<t<\delta}|f(t)|_{L^\infty}\leq M,$ for some given $\epsilon, \delta, M.$  Note if the assertion is true, we can solve the $n$-dimensional Euler equations on $[0,\delta]$ for any initial data with $|\nabla u_0|=|f_0|_{L^\infty}<\epsilon.$ 
Then, $f$ satisfies the equation:
$$f_t +(u\cdot\nabla) f + Q(f,f) +R(B(f,f))=0.$$
Now we write this equation along the characteristics of $u$ by solving $$\dot{\Phi}=u(\Phi)$$ $$\Phi(0)=Id.$$
Then we get: 
$$(f\circ \Phi)_t+ Q(f\circ\Phi, f\circ\Phi)+R(B(f\circ\Phi,f\circ\Phi))+[R,\Phi]B(f,f)=0.$$

By Corollary 7.4, we have:
$$|f\circ \Phi|_{L^\infty} \geq |f_0 +t R(B(f_0,f_0))|_{L^\infty} -t|g|_{L^\infty}- t^2 C( \sup_{0\leq \tau\leq t}|f(\tau)|_{L^\infty}) |f_0|_{B^{\frac{1}{2}}_{2d,1}}, $$ where $$g:=Q(f\circ\Phi, f\circ\Phi)+[R,\Phi]B(f,f).$$ Here, we have implicitly used the result of Vishik \cite{V1} that the Euler equations are locally well-posed on $B^{\frac{1}{2}}_{2d,1}$ which implies that the remainder term, $g$, belongs to $B^{\frac{1}{2}}_{2d,1}.$ 
Now we need to estimate $g$ using the commutator estimate (3.1).  Since, $|f(t)|=|\nabla u(t)|\leq M$ on $[0,\delta],$ we can choose $t$ very small so that the conditions of Proposition 3.1 are satisfied (namely, that $\Phi$ is sufficiently close to the identity). Hence, we have that
$$|g|_{L^\infty} \leq C|f|_{L^\infty}^2+tC(|\nabla u|_{L^\infty})|B(f,f)|_{B^{\frac{1}{2}}_{2d,1}} \leq C|f|_{L^\infty}^2 +tC(|\nabla u|_{L^\infty})|f_0|_{{B^{\frac{1}{2}}_{2d,1}}} |f|_{L^\infty}.  $$
Consequently, we have:
$$|f|_{L^\infty} \geq |f_0 +tR(B(f_0,f_0))|_{L^\infty}- t \big( C|f|_{L^\infty}^2 +tC ( |\nabla u|_{L^\infty})|f_0|_{{B^{\frac{1}{2}}_{2d,1}}}\big).$$

By assumption, $|f|_{L^\infty}< M.$
Hence, 
$$|f|_{L^\infty} \geq |f_0 + tR(B(f_0,f_0))|_{L^\infty}-tC(M)-t^2C(M)|f_0|_{B^{\frac{1}{2}}_{2d,1}}.$$

\begin{lemm}
There exists a sequence of divergence-free functions $g_N\in C^\infty $, of compact support, such that the following holds:
\begin{equation} | \nabla g_{N}|_{L^\infty} \leq 1,\end{equation}
\begin{equation} |R(B(\nabla g_{N},\nabla g_N))|_{L^\infty} \geq cN,\end{equation}
\begin{equation} |g_{N}|_{B^{\frac{1}{2}}_{2d,1}} \leq CN, \end{equation}
where $c$ and $C$ are constants independent of $N$. 
\end{lemm}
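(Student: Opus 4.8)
The plan is to build $g^N$ as a mollification of a single fixed homogeneous velocity profile. The guiding heuristic: since (7.15) forces $B(\nabla g^N,\nabla g^N)$ to be bounded by a universal constant, and a Calder\'{o}n--Zygmund operator is bounded on $L^\infty$ only up to a logarithmic loss, the only way to get (7.16) is for $B(\nabla g^N,\nabla g^N)$ to be a bounded function carrying a genuine ``corner-type'' (zero-homogeneous, non-smooth) singularity, regularized at scale $2^{-N}$ --- the same mechanism behind Lemma 5.5. So $g^N$ should be a regularization of a homogeneous-degree-one divergence-free field: then $\nabla g^N$ is zero-homogeneous, hence bounded, while the quadratic expression $B(\nabla g^N,\nabla g^N)$ inherits a zero-homogeneous singularity whose image under the second-order Riesz transforms in the pressure term is logarithmic.

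Concretely, I would work in two dimensions and embed into $\mathbb{R}^d$ by letting the field depend only on $(x_1,x_2)$ and multiplying by a fixed cutoff in the remaining variables; this keeps the field divergence-free and keeps $B(\nabla u,\nabla u)$ a quadratic expression in the entries of $\nabla^2\psi$, where $\psi$ is the stream function. Take $\psi$ to equal, near the origin, $r^2\Psi(\theta)$ for a fixed trigonometric polynomial $\Psi$ of class $C^{1,1}$, cut off to have compact support, and normalized so that $|\nabla^2\psi|_{L^\infty}\le 1$; choose $\Psi$ so that the fixed zero-homogeneous function $h:=B(\nabla^2\psi)$ has a nonzero second angular Fourier coefficient along $S^1$. (Note $h$ is quadratic in the whole Hessian, not just its trace $\Delta\psi$, so it can have a nonzero second mode even though $\Delta\psi$ never does; exhibiting one admissible $\Psi$ is a short explicit computation.) Then set $g^N:=\nabla^\perp(\psi*\rho_N)$ with $\rho_N$ a standard mollifier at scale $2^{-N}$. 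Now $g^N$ is divergence-free, $C^\infty$, and compactly supported, hence lies in every $B^{a+1}_{\rho,1}$; moreover $|\nabla g^N|_{L^\infty}=|(\nabla^2\psi)*\rho_N|_{L^\infty}\le|\nabla^2\psi|_{L^\infty}\le 1$, which is (7.15). Estimate (7.17) follows from a routine Littlewood--Paley count: for $2^q\lesssim 2^N$ the degree-one homogeneity makes $2^{q/2}|\Delta_q g^N|_{L^{2d}}$ geometrically decaying, and for $2^q\gg 2^N$ the pieces decay rapidly since $g^N$ is smooth at those scales; this gives $|g^N|_{B^{1/2}_{2d,1}}\lesssim 1\le CN$.

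The core of the argument is (7.16). Write $h_N:=B(\nabla g^N,\nabla g^N)$; near the origin and above scale $2^{-N}$ it agrees with $h$ up to a term whose Riesz transforms stay $O(1)$ there (coming from the far-field cutoff and from commuting the quadratic form past the mollification), and below scale $2^{-N}$ it is a smooth regularization bounded by a universal constant. To see the logarithm, invert the Laplacian: the resonance
\[
\Delta\left(r^2\log r\, e^{\pm 2i\theta}\right) = 4\, e^{\pm 2i\theta}
\]
shows that the Newtonian potential of $h$ contains a term proportional to $r^2\log r$, with coefficient proportional to the (nonzero, by construction) second Fourier coefficient of $h$; hence $R(h)$, a combination of second derivatives of this potential, has a true $\log|x|$ singularity at the origin. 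For $h_N$ the logarithm is cut off at scale $2^{-N}$, so $\|R(h_N)\|_{L^\infty}\gtrsim N$, which is (7.16). Consistency is checked by the standard log-loss bound $\|Rg\|_{L^\infty}\lesssim\|g\|_{L^\infty}\log\big(e+\|g\|_{\mathrm{Lip}}/\|g\|_{L^\infty}\big)$ applied to $h_N$, whose Lipschitz norm is $O(2^N)$, giving the matching $\|R(h_N)\|_{L^\infty}\lesssim N$.

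I expect the two points needing real (if modest) work to be: (i) choosing $\Psi$ so that the second angular mode of $B(\nabla^2\psi)$ is nonzero while $|\nabla^2\psi|_{L^\infty}\le1$ --- which also forces one to pin down exactly which quadratic form $B$ comes from the pressure (the argument is insensitive to the precise convention, since any such form is quadratic in the Hessian entries); and (ii) verifying that the far-field cutoff, the embedding into $\mathbb{R}^d$, and the mollification perturb the pressure source only by terms that are harmless near the origin, so the logarithm survives regularization with magnitude of order exactly $N$. Everything else --- the divergence-free property, smoothness, compact support, and the Besov bound --- is soft.
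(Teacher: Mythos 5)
Your construction is correct, but it takes a genuinely different route from the paper's. The paper does not make the quadratic form itself resonant: it takes $G=Q\log(x^2+y^2)$ with $Q=x^4+y^4-6x^2y^2$ harmonic of degree $4$, sets $u=\delta\nabla^\perp\Delta(\chi G)+\eta\nabla^\perp(y\chi)$, and uses the background shear to \emph{linearize} the bilinear pressure source: the cross term is $\eta\delta\,\partial_{xx}\Delta G$, whose image under $R_2R_2$ is $\eta\delta\,\partial_{xxyy}G=-24\eta\delta\log(x^2+y^2)+O(1)$, while the purely quadratic remainder $\delta^2J$ is bounded and hence contributes at most $C\delta^2\log$ by the $L^\infty\to \mathrm{BMO}$ bound; choosing $\delta\ll\eta$ makes the linear logarithm dominate, and regularization at scale $2^{-N}$ gives the $cN$ lower bound. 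You instead keep the mechanism purely quadratic: a degree-two homogeneous stream function $r^2\Psi(\theta)$ with bounded Hessian whose pressure source has a nonzero second angular harmonic, so the logarithm comes directly from the resonance $\Delta\bigl(r^2\log r\,e^{\pm2i\theta}\bigr)=4e^{\pm2i\theta}$. Both mechanisms are sound, and your two flagged gaps are real but harmless: (i) a single pure harmonic does not work (e.g.\ $\Psi=\cos4\theta$ produces only angular modes $0,\pm8$ in the source), but $\Psi(\theta)=\cos4\theta+\cos6\theta$ does --- a direct computation gives $e^{\pm2i\theta}$-coefficient $-36$ for $\det D^2\psi$ and $-6$ for $\psi_{xx}\psi_{yy}$, so the nonvanishing holds for either convention for $B$; (ii) the cutoff and mollification errors can be handled by scaling (since $R$ is a degree-zero operator and the inner error is a fixed profile rescaled to $2^{-N}$), which is exactly the kind of brief verification the paper itself leaves implicit in its ``replace $\log(x^2+y^2)$ by $\log(x^2+y^2+2^{-N})$'' step. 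Note also that the quantity that actually enters the proof of Theorem 7.1 is the Besov norm of $f_0=\nabla u_0$; your construction gives $|\nabla g^N|_{B^{1/2}_{2d,1}}\lesssim N$ in addition to $|g^N|_{B^{1/2}_{2d,1}}\lesssim1$, so nothing is lost there. In terms of trade-offs: the paper's shear trick reduces the lemma to the linear fact that $R_iR_j$ of one explicit bounded function is logarithmic, at the price of the two-parameter $\delta\ll\eta$ bookkeeping and the BMO bound for the remainder; your approach needs no auxiliary shear or smallness hierarchy and yields a single clean profile, at the price of an explicit angular-mode computation tied to the specific quadratic form $B$ and slightly more care in the regularization, since the main term is itself quadratic.
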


Assuming this lemma is true, take $u_0 = \epsilon g_N,$ where $N$ is fixed for the moment. Then, 
$$|f|_{L^\infty} \geq ctN\epsilon^2 -\epsilon-tC(M)-t^2 C(M) N. $$
Recall that we need $t<\frac{c}{M} $ in order to apply Proposition 3.1 (because we need $\Phi$ to be close enough to the identity). Now choose $N$ large enough, $t$ small enough and then $|f|_{L^\infty} >M,$ which is a contradiction. Consequently, for every $\epsilon,\delta,M>0,$ there exists $u_0 \in C^\infty$ with $|u_0|_{\text{Lip}} \leq \epsilon$ and $$\sup_{0\leq t\leq \delta}|\nabla u(t)|\geq M.$$
\qed

\addtocontents{toc}{\SkipTocEntry}

\subsection{Proof of Lemma 7.5}

We are interested in showing that for some $i,j$ and for some divergence free $u,$ with $\nabla u \in L^\infty,$ $D^2 p= R_{i}R_{j} \text{det}(\nabla u)$ has a logarithmic singularity. Once that is shown, Lemma 7.5 will follow by a regularization argument.  
Take a harmonic polynomial, $Q,$ which is homogeneous of degree 4. In the two-dimensional case, we can take $$Q(x,y):=x^4+y^4-6x^2y^2,$$
$$\Delta Q =0.$$

Define $$G(x,y):= Q(x,y) Log (x^2 +y^2).$$
Notice, on the one hand, we have \begin{equation} \partial_{i}\partial_{j}\Delta G\in L^\infty (B_{1}(0)), i,j\in\{1,2\}.\end{equation}
On the other hand, we have \begin{equation} \partial_{xxyy} G=-24Log(x^2+y^2)+H(x,y),\end{equation} with $H\in L^\infty(B_{1}(0)).$ In particular, $\partial_{xxyy}G$ has a logarithmic singularity at the origin--and the same can be said about $\partial_{xxxx}G$ and $\partial_{yyyy}G.$ 

\vspace{3mm}

Define $\tilde{u}=\nabla^\perp \Delta G.$ Then, by (7.17),  $\nabla \tilde{u} \in L^\infty(B_{1}(0)).$ Moreover, by definition, $$R_{i}R_{j} \nabla \tilde{u}=\nabla\nabla^\perp\partial_{ij}G.$$
Thus, for example, $R_{1}R_2 \partial_x\tilde{u}_{1}=\partial_{xxyy}G$ has a logarithmic singularity in $B_{1}(0).$ Unfortunately, we are interested in showing that $R_{i}R_{j} \text{det}(\nabla u)$ has a logarithmic singularity for some $i,j,$ not $R_{i}R_{j}\nabla u.$ To rectify this, we choose $$u=\delta \nabla^\perp\Delta (\chi G)+\eta\nabla^\perp(y\chi),$$ where $\eta, \delta$ are small parameters which will be determined and $\chi$ is a smooth cut-off function with: 
$$\chi=1 \, \, \text{on} \, \, B_1(0),$$
$$\chi=0 \,\, \text{on} \, \, B_2(0)^c,$$ and $$|\nabla^2\chi|_{L^\infty}\leq 2.$$ 
Note that $u$ is divergence free and $$u= \delta\nabla^\perp \Delta G +  \eta(y,0) \,\, \text{on} \,\, B_{1}(0). $$
Therefore, on $B_1(0)$, $$\nabla u= \delta \left[ {\begin{array}{cc} -\partial_{xy} \Delta G & -\partial_{yy}\Delta G \\ \partial_{xx}\Delta G & \partial_{xy}\Delta G \\\end{array} } \right] + \eta   \left[ {\begin{array}{cc}0 & 1 \\0 & 0 \\\end{array} } \right] .$$
Hence, $$\text{det}(\nabla u) = \eta\delta\partial_{xx}\Delta G + \delta^2 J(x,y),$$ where $J$ is bounded on $B_{1}(0)$.
Now consider $R_2 R_2 \text{det}(\nabla u):$
$$R_2 R_2 \text{det}(\nabla u)= \eta \delta \partial_{xxyy}G +\delta^2 R_{2} R_2 J.$$
Now, by (7.18), we have $$ R_2 R_2 \text{det}(\nabla u)= \eta \delta (-24Log(x^2+y^2)+H(x,y)) +\delta^2 R_{2} R_2 J, $$
with $H$ and $J$ bounded. Recall that $R_2R_2$ maps $L^\infty$ to BMO and that any BMO function can have at most a logarithmic singularity\footnote{Using the John-Nirenberg inequality, any $L^1\cap$ BMO function, $f$, satisfies $$|f|_{L^p}\leq C\, p |f|_{L^1\cap BMO}.$$ Moreover, $|\log(x^2+y^2)|_{L^p}\approx c\,p$ for some fixed constants $c$ and $C$.}.

Thus,  
$$|R_2 R_2 \text{det}(\nabla u)|\geq 24 \eta \delta |Log(x^2+y^2)| -C\delta^2 |Log(x^2+y^2)| -|H(x,y)|.$$
Now we may choose $\delta < < \eta$  so that $$|R_2 R_2 \text{det}(\nabla u)|\geq \alpha Log(x^2+y^2),$$ for some fixed number $\alpha,$ while $|\nabla u|_{L^\infty}\leq 1$.

One may regularize the constructed velocity field by replacing $Log(x^2+y^2)$ with $Log(x^2+y^2+\frac{1}{2^N})$ or by convolving $u$ with an approximation of the identity.   

\section{Strong ill-posedness in $C^1:$ the $L^p$ approach}

It is possible to prove the ill-posedness of the Euler equations in $C^1$ in a more direct fashion. We now prove Theorem 7.2.

\begin{proof}

Using the initial data constructed above in section 7, we see that there exists $u_0$ so that 
$$u_0 \in C^1$$ but $$|D^2 p_0|_{L^p} =|B(\nabla u_0, \nabla u_0)|_{L^p}\geq c p^{3/4}, \forall p>1.$$
Furthermore, as was noted in Proposition 3.3, $$\|[R,\Phi]\|_{L^p\rightarrow L^p} \leq c_{p} |\Phi-I|_{Lip}$$ and
 $ c_{p}\approx p$ for $p$ large. 
Therefore, if we assume that the solution $u(t)$ remains Lipschitz for positive time (say that $|\nabla u|\leq M$ for $t<c$) then we see that $\nabla u$ will satisfy the following estimate in $L^p$.
$$|\nabla u|_{L^p} \geq c p^{3/4}t-C(M) p t^2$$ for all $p$ and all $t>0.$
This obviously leads to a contradiction for small $t$ since $|\nabla u(t)|_{L^p}>c p^\frac{1}{4}$ for $t<\frac{1}{\sqrt{p}}$ for some small $c$ while $\nabla u $ remains bounded. Thus the solution must leave $C^1$.
Note that our initial data can be taken to be as localized as we want so we can deal with the whole space, periodic boundary conditions, and the bounded domain case. 

\end{proof}

\section{The $C^k$ case}

\begin{thm}
The Euler equations are strongly ill-posed in $C^k$ spaces for $k\geq 1$. In other words, for every $\epsilon>0$ there exists initial data $u_0\in C^k$ such that the unique solution, $u(t)$, of the Euler equations with initial data $u_0$ leaves $C^k$ immediately. 

\end{thm}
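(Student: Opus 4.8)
The plan is to repeat the argument of Section~8 one derivative higher: differentiate the velocity equation $k$ times instead of once, extract the growth from a $k$-th order analogue of the construction of Section~7.2, and then run the $L^p$ argument with $p\to\infty$. (If one only wants the weaker norm-inflation statement one can instead run the Besov argument behind Theorem~7.1.)

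First I would set $f=D^k u$ and differentiate (7.1)--(7.2) $k$ times. Writing the pressure contribution as $D^k(\nabla p)=D^{k-1}(D^2 p)=R_iR_j\,D^{k-1}\!\big(B(\nabla u,\nabla u)\big)$, one gets a transport equation of the schematic form
\[
\partial_t f + (u\cdot\nabla)f + R_iR_j\,D^{k-1}\!\big(B(\nabla u,\nabla u)\big) + \mathcal{Q}_k(u)=0,
\]
where $\mathcal{Q}_k(u)$ collects the Leibniz remainders: the transport commutator $[D^k,u\cdot\nabla]u$, the differentiated zeroth-order bilinear term coming from $Q(\nabla u,\nabla u)$, and the lower-order pieces of $D^{k-1}\!\big(B(\nabla u,\nabla u)\big)$. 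The key dichotomy, just as in the $C^1$ case, is that under an a priori bound $|D^j u(t)|\le M$ for $j\le k$ on a time interval, every term carrying no singular integral is bounded in $L^\infty$, hence in every $L^p$ with a $p$-independent constant, while the pieces carrying a singular integral involve only bounded functions and so cost at most one power of $p$ (Proposition~3.3; equivalently $R_iR_j:L^\infty\to BMO$). The only object that may genuinely produce a logarithmic singularity is the full pressure term $R_iR_j\,D^{k-1}\!\big(B(\nabla u,\nabla u)\big)$.

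Next I would argue by contradiction. Suppose the $C^k\cap L^2$ solution from suitably chosen initial data $u_0$ satisfies $|D^j u(t)|\le M$, $j\le k$, for $t<c$. Writing the equation along the flow $\Phi$ of $u$ (which stays close to the identity for short time since $\nabla u$ is bounded), using Duhamel, and inserting the $L^p$ commutator bound $\|[R,\Phi]\|_{L^p\to L^p}\lesssim c_p\,|\Phi-Id|_{\mathrm{Lip}}$ with $c_p\approx p$ (Proposition~3.3), one arrives, as in Section~8, at an estimate of the form
\[
\|D^k u(t)\|_{L^p}\ \ge\ t\,\big\|R_iR_j\,D^{k-1}\!\big(B(\nabla u_0,\nabla u_0)\big)\big\|_{L^p} - C(M)\,p\,t^2 - C(M)\,t .
\]
If $u_0$ is chosen so that $R_iR_j\,D^{k-1}\!\big(B(\nabla u_0,\nabla u_0)\big)$ has a genuine logarithmic singularity, i.e. $\big\|R_iR_j\,D^{k-1}\!\big(B(\nabla u_0,\nabla u_0)\big)\big\|_{L^p}\ge c\,p$ for all $p>1$, then fixing $t$ small (depending on $M$) gives $\|D^k u(t)\|_{L^p}\ge c'(M)\,p$. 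Since $u_0$ is compactly supported, the bound $|D^k u(t)|\le M$ would force $\|D^k u(t)\|_{L^p}\le C(M)$ uniformly in $p$; letting $p\to\infty$ yields a contradiction, so the solution must leave $C^k$.

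The main work, and the step I expect to be the real obstacle, is producing $u_0$: a compactly supported divergence-free $u_0\in C^k\cap L^2$ with $D^k u_0\in L^1\cap L^\infty$ for which $R_iR_j\,D^{k-1}\!\big(B(\nabla u_0,\nabla u_0)\big)$ is logarithmically singular. I would generalize Section~7.2: take $G=\chi\, Q\,\log|x|^2$ with $Q$ a harmonic homogeneous polynomial whose degree $m=m(k)$ is tuned so that, after the relevant number of derivatives, a clean $\log$ survives while $D^k u_0$ remains bounded and continuous, and build $u_0$ as a two-parameter combination $u_0=\delta\,\nabla^\perp(\cdots G\cdots)+\eta\,\nabla^\perp(\cdots)$ with $\delta\ll\eta$, so that in $D^{k-1}\!\big(B(\nabla u_0,\nabla u_0)\big)$ the $\sim\eta\delta$ cross term produces (after $R_iR_j$) the surviving logarithm while the $\sim\delta^2$ remainder stays bounded. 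The delicate points are the homogeneity/derivative bookkeeping — that $D^k u_0$ is bounded and continuous, that the cross term is the right borderline object, and that no competing logarithm of the wrong sign appears — together with checking that the interior Leibniz remainders in $\mathcal{Q}_k(u)$ really contribute no logarithm at $t=0$; the regularization at the end of Section~7.2 ($\log|x|^2\leadsto\log(|x|^2+2^{-N})$) then also yields the norm-inflation version in the spirit of Theorem~7.1.
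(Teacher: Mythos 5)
Your proposal follows essentially the same route as the paper: Section~9 likewise runs the $L^p$ contradiction argument of Section~8 on $\partial_{x}^{k}u$, observes that in $D^{k-1}\det(\nabla u)$ only the terms where all derivatives fall on one factor can produce a logarithm, and builds the data by generalizing the Section~7.2 construction (harmonic polynomial times $\log$, with the $\delta\ll\eta$ perturbation isolating the cross term). The one step you flag as the real obstacle is handled in the paper exactly as you propose: take the homogeneous harmonic polynomial obtained as the $k$-th antiderivative in $x$ of the degree-$4$ polynomial $Q$ from Section~7, differentiate only in the $x_1$ direction, and repeat the Section~7 computation verbatim.
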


We note that very recently Bourgain and Li have proven the same result as above \cite{BLi2}. We clarify here that strong ill-posedness in $C^k$ can be proven quite easily only using commutator estimates without having to rely upon very intricate constructions. 

\begin{proof}
We just sketch the proof since it is basically the same as the $C^1$ case. 
Note that it suffices to consider the two dimensional Euler equations (in the whole space case in higher dimensions a similar argument can be made simply by modifying the initial data slightly).  
Now consider the equation for $D^k u:= \partial_{x}^k u$ which means $k$ spatial derivatives of $u$ with respect to the first variable. 

$\nabla D^{k-1} u$ satisfies the following equation:

$$\partial_t \nabla D^{k-1} u + u \cdot \nabla D^k u +\sum_{j,l}^k Q(D^j u, D^l u) +D^{k-1} D^2 p=0. $$

We are going to take data in $C^{k}.$ Then, locally in time, there will be a $C^{k-\epsilon}$ solution by the result of Lichtenstein \cite{Lich}. Assume that this solution remains in $C^k$ for $t\in [0,1].$  
Now recall that $$\Delta p= det (\nabla u)$$ so that $$(D^2 p)_{ij} =\big (R_{i}R_{j} det(\nabla u) \big )_{ij}.$$
Following the proof of Theorem 8.1, it suffices to construct $u_0\in C^k$ such that $|D^{k+1} p_0|_{L^p}\geq cp$ as $p\rightarrow\infty.$ Notice that $D^{k+1}p_0$ will consist of many terms all of which belong to $C^{2-\epsilon}$ except for the terms where all of the derivatives hit one column of $\nabla u$ so that we only have to focus on these terms (because the $C^{2-\epsilon}$ terms will be well-controlled) Now we can choose $P$ to be the $k+3$ degree homogeneous polynomial which is just the $k^{th}$ integral with respect to $x$ of the $Q$ constructed in section 7. Then the argument is the same as in section 7 and we are done.

\end{proof}

\section{Further Results}

In this section, we collect some further applications of Theorem \ref{LowerBoundTheorem} and Proposition \ref{CommutatorEstimate}. Each application has a slightly different complication which we must overcome first to apply the commutator estimate. First, each of the examples we give are systems and not scalar equations. Second, in the Oldroyd-B case, we will see that the "singular integral" on the right hand side of $$\partial_t f+u\cdot\nabla f=R(f)$$ may actually be non-linear so long as the non-linear part can be controlled in the right way using a-priori estimates. In the SQG case, we will also find that the singular integral $R$ can actually have a variable-coefficient and that this can be overcome by using local well-posedness in a critical Besov space coupled to a Taylor expansion in time after one factors out the effect of the transport term (which is how Proposition \ref{CommutatorEstimate} is used).

\subsection{Oldroyd-B}

Recall the two-dimensional Oldroyd-B system which models the evolution of the velocity field, $u$, and strain matrix, $\tau,$ of some non-Newtonian fluids:

\begin{equation}
\label{OldroydB1}\partial_t u+u\cdot \nabla u+\nabla p =\Delta u + \div(\tau)
\end{equation}
\begin{equation}
\label{OldroydB2}\div(u)=0
\end{equation}
\begin{equation} \label{OldroydB3}\partial_t\tau+u\cdot\nabla\tau +Q(\nabla u,\tau)+a\tau=D(u)\end{equation} with $Q(\nabla u,\tau)= \tau(\nabla u-\nabla u^t)$,  $D(u)=\frac{1}{2}(\nabla u+\nabla u^t)$, and $a\geq 0$. It is known since the work of Chemin and the second author \cite{CM} that to prove global regularity for this system one needs $L^\infty$ bounds on $\tau$ in the sense that a bound on $\int_0^T |\tau(s)|_{L^\infty}ds$ actually implies that smooth solutions on a time interval $[0,T)$ can be continued past $T.$ A natural question one could ask is whether it is possible to prove local well-posedness for merely bounded strain matrix $\tau$. Using Theorem \ref{LowerBoundTheorem}, we will in fact show that even if $u_0\equiv 0$, $\tau$ can start arbitrarily small in $L^\infty$ and become of size 1 in arbitrarily short time. That is, this system will be shown to be mildly ill-posed. 

\begin{thm}\label{OldroydBThm}
There exists a universal constant $c>0$ and a sequence of initial strain-matrices $\tau_0^\epsilon\in C^\infty(\mathbb{R}^2)$ with $|\tau_0^\epsilon|_{L^1\cap L^\infty}<\epsilon$ but which satisfy that the unique local solution $\tau^\epsilon$ to \eqref{OldroydB1}-\eqref{OldroydB3} with $u_0^\epsilon\equiv 0$ satisfies:
$$|\tau^\epsilon(t)|_{L^\infty}>c$$ for some $t<\epsilon$. 
\end{thm}

The proof will be similar to the proof of Theorem \ref{RTP1}, though we will need to be more careful regarding certain issues. First, we must consider the linearized\footnote{We have actually added the linear term $\tau$ to the second equation to simplify this sketch.} system:
$$\partial_t u+\nabla p= \Delta u +\div(\tau),$$
$$\partial_t \tau =D(u)+\tau.$$
By inspecting the linearized equation, we find that the quantity $\Gamma:=\omega+\Delta^{-1}\div\curl(\tau)$, first introduced in \cite{ER},  actually satisfies the heat equation. Hence, as long as the non-linear terms can be seen to be sub-critical in front of the Laplacian, $\Gamma$ can actually be shown to be smoother than expected. In fact, we will be able to show that $\Gamma$ is $C^\frac{1}{2}$ regular for $t>0$ even if the initial data is only bounded. This means that, up to a smoother term, $\omega\approx -\Delta^{-1}\div\curl(\tau).$ 

We then get that \eqref{OldroydB3} can be written as:
$$\partial_\tau+u\cdot\nabla\tau +Q(R_1(\tau),\tau)=R_2(\tau)+G,$$ where $G$ are "good" terms and where $u$ can be determined from $\tau$ by a pseudo-differential operator of order $-1$ plus a smoother correction. From that point the proof will follow closely the proof of Theorem \ref{RTP1}. Let us also remark that we are implicitly assuming that the Oldroyd-B equation is locally well-posed for $\tau$ in the space $B^{\frac{1}{2}}_{4,1},$ but in fact this is a consequence of showing that $\Gamma$ is smoother than expected and following the same proof as in Lemma \ref{LWP} and the work of Vishik on the 2d and 3d Euler equations \cite{V1}.  

\begin{proof}[Proof of Theorem \ref{OldroydBThm}]

\

\vspace{3mm}

\emph{Step 1: Estimates for the Good Quantity}

\vspace{3mm}

\noindent First we pass to the vorticity formulation of \eqref{OldorydB1}: 
$$\partial_t\omega+u\cdot\nabla\omega = \Delta\omega+\curl \div(\tau).$$ Next, define the operator $R_0:=\Delta^{-1} \div\curl$ acting on matrices and apply it to \eqref{OldroydB3}. We then get:
$$\partial_t R_0\tau+R_0(u\cdot\nabla \tau)+R_0Q(\nabla u, \tau)=-\omega.$$
Now define $\Gamma:=\omega+R_0\tau$ and notice:
$$\partial_t \Gamma = \Delta \Gamma,$$ with $$N(u,\tau):=\omega+a\tau+u\cdot\nabla\omega +R_0(u\cdot\nabla \tau)+R_0(Q(\nabla u,\tau)).$$ 
Next, using Duhamel's formula we see:
\begin{equation}\label{Duhamel} \Gamma= e^{t\Delta} \Gamma_0-\int_0^t e^{(t-s)\Delta}N(u,\tau)(s)ds.\end{equation}
Now, as is established in \cite{LMOldroyd}, $|\omega|_{L^8}+|\tau|_{L^8}\leq (|\tau_0|_{L^8}+|\omega_0|_{L^8})\exp(Ct)$. This, in turn, implies that $|\omega|_{L^8}+|\tau|_{L^8}\leq C\epsilon$  since we will choose $|\tau_0|_{L^1\cap L^\infty}<\epsilon$ and $\omega_0\equiv 0$, and $t<1$.  Now note that using these estimates $$|N(u,\tau)|_{W^{-1,4}}<C\epsilon.$$ In particular, using standard parabolic estimates, and the Sobolev imbedding theorem
$$|\Gamma(t)|_{C^{\frac{1}{2}}}\leq C\frac{|\Gamma_0|_{L^\infty}}{\sqrt{t}}.$$ Note that the degeneration of the bound as $t\rightarrow 0$ is only coming from the linear part. 

\vspace{3mm}

\emph{Step 2: Application of Theorem \ref{LowerBoundTheorem}}

\vspace{3mm}

We can write \eqref{OldorydB3} as:
$$\partial_t\tau +u\cdot\nabla\tau+Q(\nabla u,\tau)+a\tau = D(\nabla^\perp(-\Delta)^{-1} \omega).$$ Now we introduce $\Gamma$:
$$\partial_t\tau +u\cdot\nabla\tau+Q(\nabla u,\tau)+a\tau = -D(\nabla^\perp(-\Delta)^{-2}\div\curl(\tau))+D(\nabla^\perp(-\Delta)^{-1} \Gamma).$$
Now define $R:=-D(\nabla^\perp (-\Delta)^{-2}\div\curl)$ and we see:
$$\partial_t\tau +u\cdot\nabla\tau = R(\tau)+G$$ with the good part defined by:
$$G:=D(\nabla^\perp(-\Delta)^{-1} \Gamma)-Q(\nabla u,\tau)-a\tau.$$
Just as before, we get the following lower bound on the growth of $\tau$ using a slight modification of Theorem \ref{LowerBoundTheorem} to include the good term $G$ 
$$|\tau|_{L^\infty}(t)\geq |t R(\tau_0)+\tau_0|_{L^\infty}-Ct^2(1+|u|_{L^\infty Lip} \exp(tC|u|_{L^\infty Lip}))|\tau_0|_{B^{\frac{1}{2}}_{4,1}}-|G(t)|_{B^{\frac{1}{2}}_{4,1}}.$$

To conclude the proof of the theorem, it suffices to show that $R$ satisfies Assumption \ref{Assumption1} as in the proof of Theorem \ref{RTP1}. 

\vspace{3mm}

\emph{Step 3: Verifying that $R$ satisfies Assumption \ref{Assumption1} for properly chosen initial data.} 
\
\vspace{3mm}

As above, $R(\tau_0)=-D(\nabla^\perp(-\Delta)^{-2}\div \curl)(\tau_0)$ and $\tau_0$ is a matrix of functions. Take the case where $$\tau_0= \left[ {\begin{array}{cc} a_0 & 0 \\ 0 & 0 \\\end{array} } \right]$$ for some smooth function $a_0$. Then, $R(\tau_0)=-D(\nabla^\perp(-\Delta)^{-2}\partial_{xy}a_0).$ This means that the components of $R(\tau_0)$ are just fourth order Reisz transforms of $a_0$ like $(R_1^2-R_2^2)R_1R_2 a_0.$ Since Riesz transforms are bounded on the spaces $B^{\frac{1}{2}}_{4,1},$ it suffices to show that $(R_1^2-R_2^2)R_1R_2$ satisfies Assumption \ref{Assumption1}. This was already done above in the study of the Euler equation for $C^1$ velocity fields and we recall that the right initial data to choose is:

$$a_0^\epsilon(x,y)=\epsilon\Delta^2 (xy(x^2-y^2) \log(x^2+y^2+\frac{1}{N^2})\phi(x^2+y^2))$$ with $\phi$ a smooth function with $\phi\equiv 1$  in a neighborhood of 0 and $\phi\equiv 0$ outside of $B_2(0)$.  Since $xy(x^2-y^2)$ is harmonic, it is easy to see that $|a_0|_{L^\infty}\approx \epsilon$ but that $|(R_1^2-R_2^2)R_1R_2 a_0|_{L^\infty}\approx|a_0|_{B^{\frac{1}{2}}_{4,1}}\approx \epsilon\log N.$

\vspace{3mm}

\emph{Step 4: Choosing the constants}

\vspace{3mm}

\noindent Combining the previous steps leads us to the following lower bound:
$$|\tau|_{L^\infty}(t)\geq tc\epsilon \log(N) -Ct^2\epsilon^2 \log(N)^2 -\frac{\epsilon}{\sqrt{t}}.$$ 
First choose $\epsilon t\log(N)=\frac{c}{2C}.$
Then, $$|\tau|_{L^\infty}\geq \frac{c^2}{4C}-\frac{\epsilon}{\sqrt{t}}.$$
Now we take $t=\epsilon$ and we are done. 
\end{proof}

\subsection{The SQG Equation}

In this section we will prove that basic stationary solutions to the SQG are unstable in $L^\infty$ in the same sense as what we did for the 3d Euler equation in Section \ref{3dEuler}. Recall the surface quasi-geostrophic equation on $\mathbb{R}^2$:
\begin{equation}\label{SQG1} \partial_t \theta+ u\cdot\nabla \theta=0, \end{equation}
\begin{equation}\label{SQG2} u=\nabla^\perp (-\Delta)^{-\frac{1}{2}}\theta,\end{equation} for the active scalar $\theta:\mathbb{R}^2\times\mathbb{R}\rightarrow \mathbb{R}$.
This system originally appeared as a model in atmospheric science (\cite{CMT},\cite{M}) but is also seen as a good model for the 3d Euler equation since the quantity $\nabla^\perp \theta$ obeys a system very similar to the 3d vorticity equation. Like the 3d Euler equation, the global regularity problem is still outstanding though exciting advances have been made in recent years in the direction of singularity formation (\cite{KRYZ}). 

Recall that, just like the 2d Euler equation, $\theta(x,t)=G(x_2)$ is a stationary solution to \eqref{SQG1}-\eqref{SQG2} on $\mathbb{T}^2$  for any smooth mean-zero function $G\not\equiv 0$. We will prove that \emph{for any} $C^{2,\alpha}$ smooth $G$, there exists a sequence of data $\theta_0^\epsilon\rightarrow G$ in $W^{1,\infty}$ but such that $|\theta^\epsilon(t)-G|_{W^{1,\infty}}>c$ for some $t<\epsilon.$ In fact, the same can be done for any smooth stationary solution which is not identically constant. However, we do not pursue this here. To do this, we write the evolution of a perturbation of such a stationary solution $G:$
\begin{equation}\label{SQGPerturbed} \partial_t \theta+ u\cdot\nabla\theta+H(G)\partial_{x_1}\theta=u_2 G',\end{equation} where $H=\frac{d}{dx_2}(-\frac{d^2}{dx_2^2})^{-1/2}$ is the Hilbert transform in the $x_2$ variable. Equations \eqref{SQGPerturbed}-\eqref{SQG2} control the evolution of perturbations of the stationary solution $G$. Using a variant on Theorem \ref{LowerBoundTheorem}, we prove:

\begin{thm}\label{SQGThm}
There exists a constant $c>0$ proportional to the infimum of $|G'|$ and a sequence of mean-zero initial data $\theta_0^\epsilon\in C^\infty(\mathbb{T}^2)$ with $|\theta_0^\epsilon|_{W^{1,\infty}}<\epsilon$ but such that the associated unique local solution $\theta^\epsilon$ to \eqref{SQGPerturbed}-\eqref{SQG2} satisfies:
$$|\theta^\epsilon(t)|_{W^{1,\infty}}>c$$ for some $t<\epsilon$. 
\end{thm}

The proof is similar to those done before except, like the Oldroyd-B case, we must take some care to understand the linearized system first. 

\begin{proof} The proof will proceed in several steps similar to Theorem \ref{OldroydBThm}

\

\vspace{3mm}

\emph{Step 1: Lower bound in the linear case.}
We begin by looking at the linearized system: $$\partial_t\theta + H(G)\partial_{x_1} \theta=G'u_2,$$
$$u=\nabla^\perp(-\Delta)^{-\frac{1}{2}}\theta.$$
Using the relation between $u$ and $\theta,$ we see:
$$\partial_t\theta + H(G)\partial_{x_1} \theta=G'R_1(\theta),$$ where $R_1$ is the first Riesz transform. First consider 
$$\partial_t\theta = G'R_1(\theta).$$  As we did in the analysis of the toy model in Section \ref{ToyModel}, we wish to show that 
$$|\theta- \theta_0-tG'R_1(\theta_0)|_{B^{\frac{1}{2}}_{4,1}}\leq C t^2|\theta_0|_{B^{\frac{1}{2}}_{4,1}}.$$ It is easy to show that this is globally well-posed on $B^{\frac{1}{2}}_{4,1}$ since $G\in C^{1,\alpha}$ and that $|\theta|_{B^{\frac{1}{2}}_{4,1}}\leq |\theta_0|_{B^{\frac{1}{2}}_{4,1}}\exp(Ct).$ Then we observe that $$|\theta|_{B^{\frac{1}{2}}_{4,1}}+|\partial_t\theta|_{B^{\frac{1}{2}}_{4,1}}\leq C |\theta_0|_{B^{\frac{1}{2}}_{4,1}}$$ on the interval $t\in [0,1],$ where $C$ is a constant that depends on $G$. Now notice: $$\theta-\theta_0=\int_0^t G'R_1(\theta)ds=\int_0^t G'(R_1(\theta)(s) -R_1(\theta_0))ds+tG'R_1(\theta_0),$$ which implies $$|\theta- \theta_0-tG'R_1(\theta_0)|_{B^{\frac{1}{2}}_{4,1}}\leq C t^2|\theta_0|_{B^{\frac{1}{2}}_{4,1}}.$$ This implies the linear lower bound:
$$|\theta|_{L^\infty}> t|G'R_1(\theta_0)|_{L^\infty}-|\theta_0|_{L^\infty}-Ct^2|\theta_0|_{B^{\frac{1}{2}}_{4,1}}.$$

\vspace{3mm}

\emph{Step 2: Lower bound in the nonlinear case.}

\vspace{3mm}

Now let's return to the original system:
$$\partial_t\theta + \tilde u \cdot\nabla \theta=G'R_1\theta$$
where $\tilde u=u+(H(G),0)$.
First we differentiate the system and set $F=\nabla^\perp\theta$:
$$\partial_tF+ \tilde u\cdot\nabla F=G'R_1F+ F\cdot\nabla \tilde u+\nabla^\perp G'R_1\theta.$$
Next we write the equation along the flow of $\tilde u$. Let $\Phi$ be the Lagrangian flow-map associated to $\tilde u$. Then, $$\partial_t(F \circ\Phi)=(G'\circ\Phi)R_1(F\circ\Phi) + (G'\circ\Phi)[R_1,\Phi]F +(F\cdot\nabla \tilde u+\nabla^\perp G'R_1\theta)\circ\Phi.$$
Taking the $B^{\frac{1}{2}}_{4,1}$ norm of both sides of this equality and using local well-posedness for $F$ in $B^{\frac{1}{2}}_{4,1}$, we get:
$$|\partial_t(F\circ\Phi)|_{B^{\frac{1}{2}}_{4,1}}\leq C |F_0|_{B^{\frac{1}{2}}_{4,1}}+|F_0|_{B^{\frac{1}{2}}_{4,1}}^2,$$
 so long as $t<\frac{c}{|F_0|_{B^{\frac{1}{2}}_{4,1}}}$. 
Now upon integrating both sides of the equation for $F\circ\Phi$, arguing as in the linear case above, and using Proposition \ref{CommutatorEstimate} we get:
$$|F\circ\Phi-F_0-t(G' R_1(F_0)+F_0\cdot\nabla\tilde u_0+\nabla^\perp G'R_1\theta_0) |_{L^\infty}\leq Ct^2 (|F_0|_{B^{\frac{1}{2}}_{4,1}}+|F_0|_{B^{\frac{1}{2}}_{4,1}}^2).$$
Now we will choose $F_0$ such that 
$$|F_0|_{L^\infty}<\epsilon$$
$$|G'R_1(F_0)|_{L^\infty}=c\epsilon \log N$$
$$|F_0|_{B^{\frac{1}{2}}_{4,1}}=C\epsilon \log N$$ with $N$ a constant to be chosen and $C$ is a universal constant.  
We then see:
$$|F|_{L^\infty}\geq ct\epsilon\log N - Ct^2\epsilon^2(\log N)^2-C\epsilon$$ and we then choose $t=\epsilon$ and $N$ suitably to give: 
$$|F(t=\epsilon)|_{L^\infty}>c$$ for some constant $c>0$ independent of $\epsilon$. 

\vspace{3mm}

\emph{Step 3: Choosing the right initial data.} The proof will be finished once we exhibit an $\theta_0\in W^{1,\infty}(\mathbb{T}^2)$ with $F_0=\nabla^\perp\theta_0$ satisfying the properties above with the properties above. First, assume without a loss of generality that $G'(0)=c>0$. A simple example of such an $\theta_0$ is the function defined on the periodic box $[-1,1]^2$ by $$\theta_0(x_1,x_2)=\frac{\epsilon}{100}\,\, (\phi_N\ast  |\cdot|) \chi(x_1)$$ with $\chi(x)=\chi(-x)$, $\chi\equiv 1$ on $[-\frac{1}{4},\frac{1}{4}]$, $\chi\equiv 0$ outside of $[-\frac{1}{2},\frac{1}{2}],$ and $\chi\in C^\infty$ and $\phi_N(x)= N\exp(-x^2N^2).$ The reason for such a choice is that when $F_0$ is a function of only $x_1,$ $R_1$ becomes the Hilbert transform in $x_1$ and it is well known that the Hilbert transform of $\partial_{x_1} |x_1|=\text{sgn}(x_1)$ is a constant multiple of $\log x_1$ near $x_1=0$. Hence, without convolving with $\phi_{N}$, we see that $\nabla \theta_0$ is bounded uniformly by $\epsilon$ and $G'R_1\nabla\theta_0$ is like $\log x_1$ in a neighborhood of the origin. We leave the rest to the interested reader.  

\end{proof}

\subsection{The Boussinesq System}

Recall the two-dimensional Boussinesq system \cite{MB}:
\begin{equation} \label{Boussinesq1} \partial_t u + (u \cdot \nabla) u + \nabla p = \left( \begin{array}{cc}
0 \\
\rho \\
\end{array} \right), \end{equation}
\begin{equation}\label{Boussinesq2} \partial_t \rho+ u\cdot\nabla\rho=0. \end{equation}

The global well-posedness of this system is an outstanding open problem in the study of incompressible fluid equations. The system models the effects of temperature variations in a fluid and the buouancy effects they induce. It is known that control on $\nabla u$ or $\nabla\rho$ in $L^\infty$ is enough to rule out singularity formation.  
One may ask the following question:

\vspace{3mm}
{\bf Question:} Is it possible to prove a (local) a-priori bound on the vorticity in $L^\infty$ as in the case $\rho\equiv 0$?
\vspace{3mm}

The same question may be asked about the $L^\infty$ norm of $\nabla\rho$ and $\nabla u$. To clearly see the effect of adding density to the problem, we will look at perturbations of the stationary solution $(\rho^*,u^*)=(-y,0).$ Note that this stationary solution is actually linearly stable (see \cite{DR}). Using Theorem \ref{LowerBoundTheorem}, we will prove that vorticity which is initially very close to $(\rho^*,u^*)$ immediately moves far away in an $L^\infty$ sense. 

\begin{thm}
There exists a fixed constant $c>0$ and a sequence of initial data $u_0^\epsilon,\rho_0^\epsilon\in C^\infty(\mathbb{T}^2)$ with $|\theta_0^\epsilon-y|_{W^{1,\infty}}<\epsilon$ and $|\omega_0^\epsilon|_{L^\infty}<\epsilon$ but such that the associated unique local solution $\rho^\epsilon$ to \eqref{Boussinesq1}-\eqref{Boussinesq2} satisfies:
$$|\omega^\epsilon(t)|_{L^\infty}>c$$ for some $t<\epsilon$. 
\end{thm}

\begin{proof}
The proof will proceed in a similar fashion to the previous two proofs. We will begin by analyzing the linearized equation and then see how to control the non-linear terms.

\emph{Step 1: Linear Analysis}
 
\noindent First we write the equation for perturbations of the stationary solution $(-y,0)$ in the vorticity formulation:
\begin{equation}\label{B1}
\partial_t \omega+ u\cdot\nabla\omega=\partial_{x_1} \rho,
\end{equation}
\begin{equation}\label{B2}
\partial_t\rho+u\cdot\nabla\rho =-u_2.
\end{equation}
Next we consider the linearized system:
$$\partial_t\omega=\partial_{x_1}\rho,$$
$$\partial_t\rho= -u_2,$$
$$u_2=\partial_{x_1}\Delta^{-1}\omega,$$
Upon differentiating the $\rho$ with respect to $x$, we see:
$$\partial_{tt}\omega = R_1^2\omega.$$
Going to the Fourier side we see $$\partial_{tt}\hat\omega = -\frac{\xi_1^2}{|\xi|^2}\hat\omega.$$
Hence, $$\omega= \exp(R_1t)A + \exp(-R_1t)B$$ with $A+B=\partial_{x_1}\omega_0$ and $R_1(A-B)=\partial_{x_1}\rho_0$ which then implies that $A-B=(-\Delta)^{-\frac{1}{2}}\rho_0.$ It is now clear that the linearized equation is ill-posed on $L^\infty$.

\emph{Step 2: Lower bounds for the non-linear problem}
\noindent In line with the linear analysis, we begin by differentiating \eqref{B2} with respect to $x$. 
Then we see: 
$$\partial_t\omega+u\cdot\nabla\omega= \partial_{x_1}\rho$$
$$\partial_t\partial_{x_1}\rho+u\cdot\nabla\partial_{x_1}\rho+\partial_{x_1} u\cdot\nabla\rho = R_1^2\omega$$
Now we consider the equation along the Lagrangian flow-map associated to $u$, which we call $\Phi$, and we see:
$$\partial_t(\omega\circ\Phi)=\partial_{x_1}\rho\circ\Phi$$
$$\partial_t (\partial_{x_1}\rho\circ\Phi) +(\partial_{x_1} u\cdot\nabla\rho)\circ\Phi=R_1^2(\omega\circ\Phi) + [R_1^2,\Phi]\omega.$$ Let us call the linear group associated to the linear system in Step 1 $\exp(Lt).$ Then we see:

\begin{align}
    \partial_t\Big\{\exp(-Lt)\begin{bmatrix}
           \omega\circ\Phi \\
              \partial_{x_1}\rho\circ\Phi
         \end{bmatrix}\Big\}=\exp(-Lt)\begin{bmatrix} 0 \\
[R,\Phi]\omega-(\partial_{x_1} u\cdot\nabla\rho)\circ\Phi
\end{bmatrix}.
  \end{align}
This then implies:

$$\begin{bmatrix} \omega\circ\Phi \\
\partial_{x_1}\rho\circ\Phi\end{bmatrix}=\exp(Lt) \begin{bmatrix} \omega_0\\ \partial_{x_1}\rho_0\end{bmatrix}+\int_0^t\exp(L(t-s))\begin{bmatrix} 0 \\
[R,\Phi]\omega-(\partial_{x_1} u\cdot\nabla\rho)\circ\Phi
\end{bmatrix}. $$
Now we suppose there exists a sequence $(\omega_0^\epsilon,\rho_0^\epsilon)\in C^\infty_c$ such that:
$$\| \begin{bmatrix} \omega_0^\epsilon\\ \partial_{x_1}\rho_0^\epsilon\end{bmatrix}\|_{L^\infty}<\epsilon,$$
$$\|R_1 \begin{bmatrix} \omega_0^\epsilon\\ \partial_{x_1}\rho_0^\epsilon\end{bmatrix}\|_{L^\infty}\geq c\epsilon \log N,$$
$$\|\begin{bmatrix} \omega_0^\epsilon\\ \partial_{x_1}\rho_0^\epsilon\end{bmatrix} \|_{B^{\frac{1}{2}}_{4,1}}\leq C\epsilon\log N.$$
In fact, we have already shown the existence of such functions in the proof of Theorem \ref{SQGThm}.
Then we get (after a simple computation):
$$|\nabla\rho^\epsilon|_{L^\infty} \geq c\epsilon t \log N-C\epsilon^2 t^2(\log N)^2 - t\epsilon\log N|\nabla\rho^\epsilon|_{L^\infty_{x,t}}.$$
To deal with the last term we simply assume without a loss of generality that $|\nabla\rho^\epsilon|_{L^\infty_{x}}<\frac{c}{2}$ uniformly on $t\in[0,\epsilon]$ for otherwise we  are done. 
Then we see (as usual): $$|\nabla\rho^\epsilon|_{L^\infty} \geq \frac{c}{2}\epsilon t \log N-C\epsilon^2 t^2(\log N)^2$$
Now if we take $t\epsilon\log N$ smaller than the very small constant $\frac{c}{100C}$ we are done. 

\end{proof}

\section{Conclusion}

In section 3 we prove a linear ill-posedness result for general transport equations with Lipschitz velocity fields and singular integral forcing. As a consequence we proved strong ill-posedness for a particular linear equation. We saw in the previous sections that proving an  $L^\infty$ mild ill-posedness result for non-linear equations is possible when three conditions are satisfied: first, that the velocity field be related to the advected quantity by a degree -1 operator (which is the case for the vorticity equation for example). Second, that the equation be locally well-posed in the critical Besov space which imbeds in $L^\infty$. Finally, that the non-local operator on the right-hand side satisfy Assumption \ref{Assumption1}. We then use the method to prove strong ill-posedness of the Euler equations in $C^k$ for integer $k.$ Finally, as examples of how this method can be used for other equations, we prove mild ill-posedness for the Oldroyd-B viscoelastic system, the surface quasi-geostrophic equation, and the Boussinesq system.

\section{Acknowledgements}

Both authors  were  partially supported by NSF grant DMS-1211806. The first author was also supported by NSF grant DMS-1402357. The first author thanks Gautam Iyer for motivating him to "write down" the $C^1$ results. The authors would also like thank the anonymous referees for their comments which greatly improved the paper.


\begin{thebibliography}{43}

\bibitem{AC09}
T.~Alazard and R.~Carles.
\newblock Loss of regularity for supercritical nonlinear {S}chr\"odinger
  equations.
\newblock {\em Math. Ann.}, 343(2):397--420, 2009.

\bibitem{BC94}H. Bahouri and J.-Y. Chemin. 
\newblock\'{E}quations de transport relatives \'{a} des champs de veceurs non-lipschitziens et m\'{e}canique des fluides. \newblock
{\em Arch. Rational. Mech, Anal.} 127 (1994), no. 2, 159-181.  

\bibitem{BCD}
H.~Bahouri, J.-Y. Chemin, and R.~Danchin.
\newblock {\em Fourier analysis and nonlinear partial differential equations}.
  Volume 343 of {\em Grundlehren der Mathematischen Wissenschaften [Fundamental
  Principles of Mathematical Sciences]}.
\newblock Springer, Heidelberg, 2011.

\bibitem{BT}
C.~Bardos and E.~S. Titi.
\newblock Loss of smoothness and energy conserving rough weak solutions for the
  {$3d$} {E}uler equations.
\newblock {\em Discrete Contin. Dyn. Syst. Ser. S}, 3(2):185--197, 2010.

\bibitem{BKM}
J.T. Beale, T. Kato, A. Majda
\newblock Remarks on the breakdown of smooth solutions for the $3$-{D} {E}uler equations.
\newblock {\em Comm. Math. Phys.} 94, 1984, 61-66

\bibitem{BLi}
J.~Bourgain and D.~Li.
\newblock Strong ill-posedness of the incompressible Euler equation in
  borderline sobolev spaces.
\newblock {\em Inventiones Mathematicae}, 201(1), 2014, 97-157.


\bibitem{BLi2}
J.~Bourgain and D.~Li.
\newblock Strong illposedness of the incompressible Euler equation in integer $C^m$ spaces.
\newblock {\em Geometric and Functional Analysis}, 25(1), 1-86, 2015. 

\bibitem{BressanNguyen}
A. Bressan and Nguyen.
\newblock Global existence of weak solutions for the Burgers-Hilbert equation.
\newblock {\em SIAM J. Math. Anal.}  46 (2014), no. 4, 2884â2904.

\bibitem{CMT}
P.~Constantin, A.~Majda, and E.~Tabak
\newblock Formation of strong fronts in the 2-D quasigeostrophic thermal active scalar.
\newblock {\em Nonlinearity.} 7(6), 1495-1533, 1994.

\bibitem{DeLSBu}
T.~Buckmaster, C.~De~Lellis, P.~Isett, and L.~Sz{\'e}kelyhidi, Jr.
\newblock Anomalous dissipation for $1/5$-Hölder Euler flows.
\newblock {\em Ann. of Math.} 182(2), 127-172, 2015.

\bibitem{CCFJ}
A.~P. Calder{\'o}n, C.~P. Calderon, E.~Fabes, M.~Jodeit, and N.~M. Rivi{\`e}re.
\newblock Applications of the {C}auchy integral on {L}ipschitz curves.
\newblock {\em Bull. Amer. Math. Soc.}, 84(2):287--290, 1978.

\bibitem{Carles07}
R.~Carles.
\newblock Geometric optics and instability for semi-classical {S}chr\"odinger
  equations.
\newblock {\em Arch. Ration. Mech. Anal.}, 183(3):525--553, 2007.

\bibitem{CM}
J.~Y.~Chemin and N.~Masmoudi
\newblock About lifespan of regular solutions of equations related to viscoelastic fluids.
\newblock {\em SIAM J. Math. Anal.} 33(1), 84--112, 2001.

\bibitem{CCT03}
M.~Christ, J.~Colliander, and T.~Tao.
\newblock Asymptotics, frequency modulation, and low regularity ill-posedness
  for canonical defocusing equations.
\newblock {\em Amer. J. Math.}, 125(6):1235--1293, 2003.

\bibitem{ConstFM}
P.~Constantin, C.~Fefferman, and A.~J. Majda.
\newblock Geometric constraints on potentially singular solutions for the
  {$3$}-{D} {E}uler equations.
\newblock {\em Comm. Partial Differential Equations}, 21(3-4):559--571, 1996.

\bibitem{DeLS}
C.~De~Lellis and L.~Sz{\'e}kelyhidi, Jr.
\newblock Dissipative continuous {E}uler flows.
\newblock {\em Invent. Math.}, 193(2):377--407, 2013.

\bibitem{DengHouYu}
J.~Deng, T.~Y. Hou, and X.~Yu.
\newblock Improved geometric conditions for non-blowup of the 3{D}
  incompressible {E}uler equation.
\newblock {\em Comm. Partial Differential Equations}, 31(1-3):293--306, 2006.

\bibitem{DL89im}
R.~J. DiPerna and P.-L. Lions.
\newblock Ordinary differential equations, transport theory and {S}obolev
  spaces.
\newblock {\em Invent. Math.}, 98(3):511--547, 1989.

\bibitem{DR}
P.~G.~Drazin and W.~H.~Reid
\newblock Hydrodynamic Stability.
\newblock {\em Cambridge University Press}, 2004.

\bibitem{ER}
T.~ Elgindi and F. Rousset.
\newblock Global regularity for some Oldroyd-B type models.
\newblock {\em Comm. Pure Appl. Math.}, 68(11), 2005-2021. 

\bibitem{FRV}
S.~Friedlander, W.~Rusin, and V.~Vicol.
\newblock On the supercritically diffusive magnetogeostrophic equations.
\newblock {\em Nonlinearity}, 25(11):3071--3097, 2012.

\bibitem{Grafakos}
L.~Grafakos.
\newblock Modern Fourier Analysis 
\newblock{Springer}, 2009. 


\bibitem{Hol}
E.~H{\"o}lder.
\newblock \"{U}ber die unbeschr\"ankte {F}ortsetzbarkeit einer stetigen ebenen
  {B}ewegung in einer unbegrenzten inkompressiblen {F}l\"ussigkeit.
\newblock {\em Math. Z.}, 37(1):727--738, 1933.

\bibitem{IMM11}
S.~Ibrahim, M.~Majdoub, and N.~Masmoudi.
\newblock Well- and ill-posedness issues for energy supercritical waves.
\newblock {\em Anal. PDE}, 4(2):341--367, 2011.

\bibitem{Isett}
P.~Isett.
\newblock H\:older continuous euler flows in three dimensions with compact
  support in time.
\newblock {\em arXiv:1211.4065}, 2012.

\bibitem{KRYZ}
A.~Kiselev, L.~Ryzhik, Y.~Yao, and A.~Zlatos
\newblock Finite time singularity for the modified SQG patch equation.
\newblock {\em Ann. Math.} 184(2), 909--948, 2016

\bibitem{KS}

A. Kiselev and V. Sverak.
\newblock{Small scale creation for solutions of the incompressible two dimensional Euler equation}
\newblock {\em Ann. Math.}180(2):1205--1220, 2014.

\bibitem{KT05}
H.~Koch and N.~Tzvetkov.
\newblock Nonlinear wave interactions for the {B}enjamin-{O}no equation.
\newblock {\em Int. Math. Res. Not.}, (30):1833--1847, 2005.

\bibitem{Lebeau05}
G.~Lebeau.
\newblock Perte de r\'egularit\'e pour les \'equations d'ondes sur-critiques.
\newblock {\em Bull. Soc. Math. France}, 133(1):145--157, 2005.

\bibitem{Lich}
L.~Lichtenstein.
\newblock \"{U}ber einige {E}xistenzprobleme der {H}ydrodynamik homogener,
  unzusammendr\"uckbarer, reibungsloser {F}l\"ussigkeiten und die
  {H}elmholtzschen {W}irbels\"atze.
\newblock {\em Math. Z.}, 23(1):89--154, 1925.

\bibitem{LMOldroyd}
P.~L.~Lions and N.~Masmoudi.
\newblock Global solutions for some Oldroyd models of non-Newtonian flows.
\newblock {\em Chinese Ann. Math. Ser. B.} 21(2), 131-146, 2000. 

\bibitem{M}
A.~Majda.
\newblock Introduction to PDEs and waves for the atmosphere and ocean.
\newblock {\em Courant Lecture Notes of Mathematics.} 2013

\bibitem{MB}
A.~ Majda and A.~Bertozzi.
\newblock Vorticity and incompressible flow.
\newblock {\em Cambridge Texts in Applied Mathematics,} 27, 2002.

\bibitem{MisY}
G.~Misio{\l}ek and T.~Yoneda.
\newblock Ill-posedness examples for the quasi-geostrophic and the {E}uler
  equations.
\newblock In {\em Analysis, geometry and quantum field theory}, volume 584 of
  {\em Contemp. Math.}, pages 251--258. Amer. Math. Soc., Providence, RI, 2012.
\bibitem{MisY2}
G.~Misio{\l}ek and T.~Yoneda.
\newblock Ill-posedness of the incompressible Euler equations in the $C^1$ space.
\newblock {\em Math. Ann.}, no. 1-2: 243-268, 2016.

\bibitem{MST02}
L.~Molinet, J.-C. Saut, and N.~Tzvetkov.
\newblock Well-posedness and ill-posedness results for the
  {K}adomtsev-{P}etviashvili-{I} equation.
\newblock {\em Duke Math. J.}, 115(2):353--384, 2002.

\bibitem{Murray}
M.A.M. Murray
\newblock The Cauchy Integral, Calder\'on Commutators, and Conjugations of Singular Integrals in $\mathbb{R}^n.$
\newblock {\em Transactions of the American Mathematical Society}, Vol. 289, No. 2, 1985, 497-518.

\bibitem{MuscaluSchlag}
C.~Muscalu and W.~Schlag.
\newblock {\em Classical and multilinear harmonic analysis. {V}ol. {II}.}
  Volume 138 of {\em Cambridge Studies in Advanced Mathematics}.
\newblock Cambridge University Press, Cambridge, 2013.

\bibitem{Scheff}
V.~Scheffer.
\newblock An inviscid flow with compact support in space-time.
\newblock {\em J. Geom. Anal.}, 3(4):343--401, 1993.

\bibitem{Shnir}
A.~Shnirelman.
\newblock Weak solutions with decreasing energy of incompressible {E}uler
  equations.
\newblock {\em Comm. Math. Phys.}, 210(3):541--603, 2000.

\bibitem{V1}
M.~Vishik.
\newblock Hydrodynamics in {B}esov spaces.
\newblock {\em Arch. Ration. Mech. Anal.}, 145(3):197--214, 1998.

\bibitem{V2}
M.~Vishik.
\newblock Incompressible flows of an ideal fluid with vorticity in borderline
  spaces of {B}esov type.
\newblock {\em Ann. Sci. \'Ecole Norm. Sup. (4)}, 32(6):769--812, 1999.

\bibitem{Wolibner33}
W.~Wolibner.
\newblock Un theor\`eme sur l'existence du mouvement plan d'un fluide parfait,
  homog\`ene, incompressible, pendant un temps infiniment long.
\newblock {\em Math. Z.}, 37(1):698--726, 1933.

\bibitem{Yudovich95}
V.~I. Yudovi{\v{c}}.
\newblock Uniqueness theorem for the basic nonstationary problem in the
  dynamics of an ideal incompressible fluid.
\newblock {\em Math. Res. Lett.}, 2(1):27--38, 1995.

\bibitem{Yudovich63}
V.~I. Yudovich.
\newblock Non-stationary flows of an ideal incompressible fluid.
\newblock {\em u Z. Vy cisl. Mat. i Mat. Fiz.}, 3:1032--1066, 1963.

\end{thebibliography}
\end{document}